\documentclass[11pt,reqno]{amsproc}

\title[]{Electric inertia and ideal magnetic reconnection in 2D}

\author{Peter Constantin}
\address{Department of Mathematics, Princeton University, Princeton, NJ 08544}
\email{const@math.princeton.edu}
\author{Zhongtian Hu}
\address{Department of Mathematics, Princeton University, Princeton, NJ 08544}
\email{zh1077@princeton.edu}
\usepackage[margin=1in]{geometry}
\usepackage{amsmath, amsthm, amssymb}
\usepackage{times}
\usepackage{color}
\usepackage{hyperref,comment}

\newcommand{\pa}{\partial}
\newcommand{\la}{\label}
\newcommand{\fr}{\frac}
\newcommand{\na}{\nabla}
\newcommand{\be}{\begin{equation}}
\newcommand{\ee}{\end{equation}}
\newcommand{\ba}{\begin{array}{l}}
\newcommand{\ea}{\end{array}}
\newcommand{\Rr}{{\mathbb R}}
\newtheorem{thm}{Theorem}
\newtheorem{prop}{Proposition}
\newtheorem{lemma}{Lemma}
\newtheorem{rem}{Remark}
\newtheorem{defi}{Definition}
\newtheorem{cor}{Corollary}
\newcommand{\beg}{\begin}
\newcommand{\ov}{\overline}

\newcommand{\eps}{\epsilon}
\newcommand{\R}{\mathbb{R}}
\newcommand{\B}{\mathbb{B}}
\newcommand{\U}{\mathbb{U}}

\newcommand{\D}{\Delta}
\newcommand{\V}{\mathbb{V}}

\newcommand{\ze}{\zeta}
\newcommand{\p}{\partial}
\newcommand{\pt}{\p_t}
\newcommand{\palpha}{\p_\alpha}
\newcommand{\pb}{\p_\beta}
\newcommand{\calH}{\mathcal{H}}
\newcommand{\calD}{\mathcal{D}}
\newcommand{\calK}{\mathcal{K}}

\newcommand{\tsig}{\tilde{\sigma}}

\DeclareMathOperator{\supp}{supp}
\DeclareMathOperator{\Int}{Int}
\date{today}
\begin{document}
\begin{abstract}
We consider inertial magneto-hydrodynamic systems in 2D. We show global existence and uniqueness of smooth solutions and global existence and uniqueness of weak solutions in Yudovich class. We  prove magnetic reconnection without magnetic resistivity, for smooth solutions
and for patch solutions. This is obtained by proving merger  in corresponding systems of coupled active scalars.
\end{abstract}
\keywords{magnetic reconnection, inertial magneto-hydrodynamics, vortex patches}
\noindent\thanks{\em{MSC Classification:  35Q31,  35Q85.}}
\maketitle
\section{Introduction}
Ideal magnetic fields do not change topology of magnetic lines, as a consequence of the magnetic induction equation. But change of topology of magnetic lines does occur in nature. There is a well established correlation between fast magnetic reconnection and solar flares.  A natural suggestion is that resistive MHD evolution  might be responsible for topology change, leading to magnetic reconnection events ~\cite{biskamp, taylor1986}. The reconnection of vortex lines is followed by massive energy release. 
There is an analogous problem in incompressible Navier-Stokes equations. In \cite{geoman, siam} it was argued on the basis of physical and analytical arguments that vortex reconnection is a regularizing mechanism in Navier-Stokes equations.

A different type of magnetic reconnection mechanism was proposed, based on a modification of ideal MHD due to magnetohydrodynamic inertia. This was discussed in \cite{twoplasmas} and \cite{morrison} in 2D. 
and more recently in \cite{boozer} for 3D.  The main idea is that the electronic inertia modifies the transport.

In this paper we prove that magnetic reconnection does occur without magnetic resistivity.  We take with \cite{twoplasmas} a model of electronic inertia in 2D plasma dynamics.
The magnetic field is $B= \na^{\perp}\phi$ where $\phi$ is a scalar time dependent function of two variables and $\na^{\perp} = (-\pa_2,\pa_1)$. The current density is $J= -\Delta \phi$ and a variable $F = \phi + J$ plays the role of a canonical momentum. The electron flow velocity is given by $\na^{\perp}\psi$ where
$\psi$ is a stream function. We denote by
$\omega$ the vorticity, $\omega = \Delta \psi$. The system \cite{twoplasmas} is
\be
\left\{
\ba
\pa_t F + \{\psi, F\} + \{\phi, \omega\} = 0,\\
\pa_t \omega + \{\psi, \omega\} + \{\phi, F\} =0.
\ea
\right .
\la{origeq}
\ee
Here $\{f, g\} = \pa_1f\pa_2 g-\pa_2f\pa_1g$ is Poisson bracket. 

As noted in \cite{twoplasmas}, the quantities $F+\omega$ and $F-\omega$ are each advected by incompressible velocities
\be
\pa_t (F+\omega) + \{\psi +\phi, F+\omega\} =0
\la{sigmapluseq}
\ee
and
\be
\pa_t (F-\omega) + \{\psi-\phi, F-\omega\} = 0.
\la{sigmaminuseq}
\ee
 If initial data for $F\pm\omega$ are integrable and bounded in space, that is, if both initial data are in the Yudovich class $\mathbb Y = L^1\cap L^{\infty}$, then  the solutions remain in $\mathbb Y$. Formally,
 from \eqref{sigmapluseq} and \eqref{sigmaminuseq} it follows that
 \[
 \|(F\pm\omega)(t)\|_{L^{p}} = \|(F\pm\omega)(0)\|_{L^{p}}.
 \]
 for every $1\le p\le\infty$. This is powerful information.  
 The equations are  a coupled system of active scalars \cite{geoman, siam}, $\sigma_{\pm} = F\pm\omega$ 
 advected by incompressible velocities $v_{\pm}$,
\be
\pa_t \sigma_{\pm} + v_{\pm}\cdot\na\sigma_{\pm} = 0.
\la{active}
\ee 
The velocities $v_{\pm}$ are obtained from $\sigma_{\pm}$ by a linear, time independent equation of state
 \be
 \left\{
 \ba
 v_+ =  \na^{\perp}\left ( \mathbb K \sigma_+ + \mathbb L \sigma_-\right)\\
 v_- = \na^{\perp}\left (-\mathbb L \sigma_+ - \mathbb K \sigma_-\right)
 \ea
 \right.
 \la{pmright}
 \ee
 where 
 \be
 \mathbb K  = \fr{1}{2}\left ( (-\Delta + \mathbb I)^{-1} - (-\Delta)^{-1}\right)
 \la{mathbbK}
 \ee
 and
 \be
 \mathbb L   = \fr{1}{2}\left ( (-\Delta +\mathbb I)^{-1} + (-\Delta)^{-1}\right )
 \la{mathbbL}
 \ee 
 are functions of $-\Delta$. In this paper we call this system the ``right-handed" system. We discuss also an analogous system we call ``left-handed", where $(-\Delta + \mathbb I)^{-1} $ above is replaced by $(\Delta - \mathbb I)^{-1}$, corresponding to having $F = \Delta\phi -\phi$ instead of $F = -\Delta \phi + \phi$. 
 (We choose to keep $\omega = \Delta\psi$ throughout, because of the analogy to fluid equations,  but switching to $\omega = -\Delta\psi$ and keeping $F = -\Delta \phi + \phi$ results in the same left-handed system, mutatis mutandis. ) 
The left-handed system is thus \eqref{active} together with
\be
 \left\{
 \ba
 v_+ =  -\na^{\perp}\left ( \mathbb L \sigma_+ + \mathbb K \sigma_-\right)\\
 v_- = \na^{\perp}\left (\mathbb K \sigma_+  +  \mathbb L \sigma_-\right)
 \ea
 \right.
 \la{pmleft}
 \ee
which means that  $\mathbb K$ and $\mathbb L$ have been  switched to $-\mathbb L$ and $-\mathbb K$ compared to \eqref{pmright}.

Both systems have unique global smooth solutions in $C^{1,\alpha}$ and unique global weak solutions in $\mathbb Y$. Both systems can be derived as critical points of a Lagrangian action \cite{morrison} (see also Section \ref{deriv}), both have infinitely many conserved quantities (Casimirs) and both systems are capable of producing  topology change of the magnetic field $F$ in both smooth regimes and in vortex patch regimes. The two systems are similar, but not at all the same. In the original right-handed system described above, the diagonal terms $\mathbb K$ and $-{\mathbb K}$ would vanish if $\mathbb I$ would be replaced by zero. In fact, the operator $\mathbb K$ is smoothing of degree four (while $\mathbb L$ is smoothing of degree two) and  is small at small distances. Removing the screening term $\mathbb I$, we have at leading order  (at  high frequencies, or small scales) 
\be
\left\{
\ba
\pa_t\sigma_{+}  - (\na^{\perp}\Delta^{-1}\sigma_-)\cdot\na \sigma_+ = 0,\\
\pa_t\sigma_{-} + (\na^{\perp}\Delta^{-1}\sigma_+)\cdot\na\sigma_- = 0,
\ea
\right.
\la{right}
\ee
a system in which one scalar drives the other.  For the left-handed system, removing the screening term
$\mathbb I$ results in the system
\be
\left\{
\ba
\pa_t\sigma_{+}  +(\na^{\perp}\Delta^{-1}\sigma_+)\cdot\na \sigma_+ = 0,\\
\pa_t\sigma_{-}  - (\na^{\perp}\Delta^{-1}\sigma_-)\cdot\na\sigma_- = 0,
\ea
\right.
\la{left}
\ee
a system in which the scalars obey incompressible Euler equations and do not interact. Here the origin of the
topology change is much simpler and clearer, the merger of the scalars is due to their lack of interaction.

Both the left-handed and the right-handed unscreened systems have the full Euler two-parameter scaling properties, meaning that if $\sigma_{\pm}(x,t)$ is a solution, then so is $\fr{1}{T}\sigma_{\pm}\left(\fr{x}{L}, \fr{t}{T}\right)$ for any positive $T$ and any positive $L$. By contrast, the screened systems have a distinguished natural length scale (related to the electronic skin depth, and taken to be the unit in this paper). In order to prove our change of topology results, we first understand them in unscreened leading order systems, and then prove they hold in the screened systems as well, by taking advantage of the smallness of the difference between unscreened and screened kernels at small distances, and by perturbing the merger of small scale solutions of the unscreened systems.

For the left-handed system where the unscreened solutions are non-interacting 2D Euler solutions we prepare two patches, a rotating Kirkhhoff ellipse and a disk, which are initially disjoint and which overlap later during the evolution. We then show that suitably scaled down, these two patches provide initial data for finite time merger of solutions of the screened equations.   

For the right-handed system we need to establish first the unscreened merger. This is done by choosing a
specific configuration with symmetry. Both $\sigma_+$ and $\sigma_-$ are taken odd in $x_1$ and we also assume that $\sigma_-$ is obtained from $\sigma_+$ by reflection across the $x_1$ axis. These symmetries are preserved by the right-handed equations (both screened and unscreened). Then we consider initial data for $\sigma_+$ that are compactly supported in the upper half plane and have a single sign in the upper right quadrant. Thus, the supports of $\sigma_+$ and $\sigma_-$ are initially disjoint. We obtain the finite time merger by analyzing the evolution of local barycenters. We show that the support of $\sigma_+$ rapidly descends and crosses the $x_1$ axis, while by symmetry, $\sigma_-$ ascends across it, and merger occurs. We show then that this behavior is produced also in the screened right-handed equations by analyzing the same objects. We also provide a more general merger stability analysis (see Section \ref{mergstabi}) where we show that merger in unscreened equations from smooth disjoint compactly supported initial data implies merger of small scale solutions in the screened equations. We consider further the stability of merger under resistive approximations, and show that, in the smooth case, solutions of resistive equations converge strongly to solutions of the ideal equations, and thus the ideal merger is stable to small resistive perturbations. This result is expected to hold also in Yudovich class, on a basis of an inviscid limit in the spirit of \cite{cde}, a topic which will be pursued in later work. 

The organization of the paper is as follows. In Section \ref{globsmooth} we prove global existence and uniqueness of smooth solutions. Global existence and uniqueness of Yudovich class solutions is presented in Section \ref{yud}. Merger of solutions of the screened left-handed equations for vortex patch initial data is presented in Section \ref{patchmerge}. The construction of merger of smooth solutions of unscreened and of screened right-handed equations is in Section \ref{recright}. Stability of merger is discussed in Section \ref{mergstabi}. The limit of zero resistivity for smooth solutions is proved in Section \ref{invlim}. A unified formal derivation of both right and left handed equations is presented in Section \ref{deriv}.

 \section{Global existence and uniqueness}\la{globsmooth}
 The results in this section are valid for both the right-handed and left-handed models, and their proofs are identical.  We present the right-handed case. We consider the operators
 \be
 \mathbb U (\omega) = \na^{\perp}(\D^{-1})\omega
 \la{mathbbU}
 \ee
 and
 \be
 \mathbb B( F) = \na^{\perp}(\mathbb I - \D)^{-1}F.
 \la{mathbbB}
 \ee
 In Eulerian frame, setting
 \be
 \sigma_{\pm} = F\pm \omega
 \la{sigmapm}
 \ee
 and
 \be
 \sigma = (\sigma_+, \sigma_-),
 \la{sigma}
 \ee
 we have
 \be
 \pa_t \sigma_{\pm} + \mathbb V_{\pm}(\sigma)\cdot\na \sigma_{\pm} =0,
 \la{sigmaeq}
 \ee
 where 
 \be 
 \mathbb V_{\pm}(\sigma) = \fr{1}{2}\left (\mathbb U (\sigma_{+}-\sigma_{-}) \pm \mathbb B(\sigma_+ +\sigma_-)\right )
 \la{Vpm}
 \ee
 which holds in view of the fact that
 \be
 \omega = \fr{1}{2}(\sigma_{+}-\sigma_{-}), \quad F=  \fr{1}{2}(\sigma_{+}+\sigma_{-}),
 \la{omegaFsigmas}
 \ee
 and so
 \be
 \mathbb V_{\pm}(\sigma) = \mathbb U(\omega)\pm \mathbb B(F),
 \la{mathbbV}
 \ee
and \eqref{sigmaeq} is just the system \eqref{sigmapluseq}, \eqref{sigmaminuseq}.

 We consider a pair
 \be
 X= (X_+, X_-)
 \la{X}
 \ee
 of Lagrangian maps
 \be
 X_{\pm}(\cdot, t):\Rr^2\to\Rr^2
 \la{xpm}
 \ee
 and fields  $\tau = (\tau_+, \tau _-) :\Rr^2 \to \Rr^2$ which are time independent and correspond to the initial data,
 \be
 \tau (a) = (F_0(a) + \omega_0(a), F_0(a) - \omega_0(a) ) = \sigma_0(a)
 \la{taupm}
 \ee
 We solve
 \be
 \pa_t X_{\pm} = \mathcal U_{\pm}(X)
 \la{Xeq}
 \ee
 where 
 \be
 \mathcal U_{\pm}(X_{\pm}) = \mathbb V_{\pm} (\tau_+\circ X_{+}^{-1}, \tau_ -\circ X_-^{-1})\circ X_{\pm}
 \la{mathcalU}
 \ee
 with initial data
 \be
 X_{\pm}(a, 0) = a.
 \la{xid}
 \ee
 Using the method of \cite{hydro}
 we have
 \beg{thm} \label{thm:Calphalwp}
 Let $1<p<\infty$ and $0<\alpha<1$, and let $\tau_{\pm}\in (C^{1+\alpha}\cap L^p)(\Rr^2)$. There exists $T>0$ depending only on the norms $\|\tau_{\pm}\|_{L^p\cap C^{1+\alpha}}$ and 
 $\lambda_{\pm}\in Lip(0,T; C^{1+\alpha}(\Rr^2))$, with $\lambda_{\pm}(a,0) = 0$,  such that $X_{\pm}(a,t) = a + \lambda_{\pm}(a,t)$ solve
 \eqref{Xeq}.  
 
 The solution $X(a,t) = (X_+(a,t), X_-(a,t))$ depends in Lipschitz manner on initial data in the norm of $||| X ||| = \sup_{t\in[0,T]}\|\lambda_{\pm}(t) \|_{C^{1+\alpha}} $, i.e. if $\widetilde X$ solves the Lagrangian system with initial data $\widetilde\tau$ then
 \be
 ||| X- \widetilde X||| \le C\|\tau-\widetilde\tau\|_{C^{1+ \alpha}\cap L^p}.
 \ee
 \end{thm}
 This theorem implies that the solution of \eqref{sigmaeq} is given by
 \be
 \sigma_{\pm}(x,t) = \tau_{\pm}(X_{\pm}^{-1}(x,t))
 \la{sigmatau}
 \ee
 for $t\in [0,T]$. We note that 
 \be
 \|\sigma_{\pm}(t)\|_{L^p} = \|\tau_{\pm}\|_{L^p}
 \la{normp}
 \ee
 holds for $0\le t\le T$ and for $1\le p\le\infty$ because $X_{\pm}$ are volume preserving.
 
 \beg{rem} In contrast with the single fluid Lagrangian evolution, where the Lipschitz dependence is only
 on $\|\tau-\widetilde \tau\|_{C^{\alpha}\cap L^p}$, for two-fluid equations there is a loss of a derivative.  
 \end{rem}
 
  The global existence of solutions now follows from a priori estimates and uniqueness, in Eulerian coordinates. We use the inequality
  \be
  \|\na \mathbb V_{\pm}(\sigma)\|_{L^{\infty}} \le C\|\sigma\|_{L^{\infty}} \log\left (2 + \fr{\|\na\sigma\|_{L^{\infty}}\|\sigma\|_{L^1}^{\fr{1}{2}}}{\|\sigma\|_{L^{\infty}}^{\fr{3}{2}}}\right).
  \la{extra}
  \ee
We obtain
\beg{thm} \label{thm:Calphagwp}
Let $\tau_{\pm}\in (L^1\cap C^{1+\alpha})(\Rr^2)$. There exists $C>0$ and a unique solution $\sigma(x,t) = (\sigma_+(x,t), \sigma_-(x, t))$ of \eqref{sigmaeq} with initial data $\sigma_{\pm}(x, 0) = \tau_{\pm}(x)$, satisfying
\be
\|\sigma(t)\|_{C^{1+\alpha}\cap L^1} \le \|\tau\|_{C^{1+\alpha}\cap L^1}\exp\left[ C\left( e^{Ct\|\tau\|_{L^{\infty}}} -1\right )\log\left (2 + \fr{\|\na\tau\|_{L^{\infty}}\|\tau\|_{L^1}^{\fr{1}{2}}}{\|\tau\|_{L^{\infty}}^{\fr{3}{2}}}\right)\right]
\la{aprb}
\ee
for all $t\ge 0$.
\end{thm}
\beg{proof}
We indicate only the salient elements of the proof and provide useful a priori bounds. On the basis of \eqref{sigmaeq} we have the a priori information that
\be
\|\sigma_{\pm}(t)\|_{L^p} = \|\tau_{\pm}\|_{L^p}
\la{sigmalp}
\ee
holds for all $1\le p\le\infty $. Then using the stretching equation
\be
(\pa_t + \mathbb V_{\pm}(\sigma) \cdot\na)(\na^{\perp}\sigma_{\pm}) = (\na \mathbb V_{\pm}(\sigma) )(\na^{\perp}\sigma_{\pm})
\la{nasigmaeq}
\ee
and the bound \eqref{extra}, we deduce using the logarithmically superlinear ODE that
\be
2 + \|\tau\|_{L^1}^{\fr{1}{2}}\|\tau\|_{L^{\infty}}^{-\fr{3}{2}}\|\na\sigma_{\pm}(t)\|_{L^{\infty}} \le \exp\left [e^{Ct\|\tau\|_{L^{\infty}}} \log\left (2 + \fr{\|\na\tau\|_{L^{\infty}}\|\tau\|_{L^1}^{\fr{1}{2}}}{\|\tau\|_{L^{\infty}}^{\fr{3}{2}}}\right)\right]
\la{nasigmabound}
\ee
holds a priori.  Returning to \eqref{extra} we have
\be
\|\na \mathbb V_{\pm}(\sigma)\| \le C\|\tau\|_{L^{\infty}}\left [e^{Ct\|\tau\|_{L^{\infty}}} \log\left (2 + \fr{\|\na\tau\|_{L^{\infty}}\|\tau\|_{L^1}^{\fr{1}{2}}}{\|\tau\|_{L^{\infty}}^{\fr{3}{2}}}\right)\right]
\la{naVlifty}
\ee
Because of the equation
\be
\fr{d}{dt}(\na X_{\pm}) = (\na \mathbb V_{\pm}(\sigma)\circ X_{\pm})\na X_{\pm}
\la{naXeq}
\ee
we know that the Lagrangian maps $X_{\pm}$ are globally defined, invertible and they and their inverses are Lipschitz, with a priori bounds of their Lipschitz constants that are uniform in space and doubly exponential in time.
\be
\max\{\|\na X_{\pm}(t)\|_{L^{\infty}}\; ,  \|\na X^{-1}(t)\|_{L^{\infty}}\} \le A(t)
\la{naXbound}
\ee
where the doubly exponential amplification factor $A(t)$ is 
\be
A(t) = \exp\left[ C\left( e^{Ct\|\tau\|_{L^{\infty}}} -1\right )\log\left (2 + \fr{\|\na\tau\|_{L^{\infty}}\|\tau\|_{L^1}^{\fr{1}{2}}}{\|\tau\|_{L^{\infty}}^{\fr{3}{2}}}\right)\right]
\la{At}
\ee
as it follows from  \eqref{naVlifty} and \eqref{naXeq} integrating in time. Note that  $A(t)$ is a priori controlled 
in time (by a double exponential) and depends only on  $\|\tau\|_{W^{1, \infty}}$. Note also that the expression is nondimensional.

Returning to \eqref{nasigmaeq} and using \eqref{naVlifty} we have also the bounds
\be
\|\na\sigma_{\pm}(t)\|_{L^p} \le A(t)\|\na\tau_{\pm}\|_{L^p} 
\la{nasigmalpbA}
\ee
for all $1\le p\le\infty$.

At this point we have the information needed to pass to $C^{1+\alpha}$ norms. We use the fact composing with $X_{\pm}(t)$ or $X_{\pm}(t)^{-1}$ does not change $L^{\infty}$ norms and amplifies Holder seminorms by $A(t)$. (We replace $A(t)^{\alpha}$ by $A(t)$  here using that $A(t)\ge 1$.) Repeated compositions do not change the amplification significantly because of the doubly exponential nature of $A(t)$, so that $A(t)^2$ or $A(t)^3$ are still doubly exponential factors which are a priori controlled in time by $\|\tau\|_{W^{1,\infty}}$. 

The fact that $\sigma_{\pm}(t) = \tau_{\pm}(X_{\pm}^{-1}(t))$ is used now together with the chain rule,
\be
\na\sigma_{\pm}  = (\na \tau_{\pm}(X_{\pm}^{-1}))(\na X_{\pm}^{-1}).
\la{chain}
\ee
Now $C^{\alpha}$ is a Banach algebra with  $\|fg\|_{C^{\alpha}}\le \|f\|_{L^{\infty}}\|g\|_{C^{\alpha}} + \|g\|_{L^{\infty}}\|f\|_{C^{\alpha}}$.
We estimate
\be
\| \na \tau_{\pm}\circ X_{\pm}^{-1}\|_{C^{\alpha}} \le A(t)\|\tau\|_{C^{1+\alpha}}
\la{nasigalpha}
\ee
In order to estimate $\|\na X_{\pm}^{-1}\|_{C^{\alpha}}$ we use the fact that the inverse map solves the ODE in Lagrangian coordinates 
\be
\fr{d}{dt}(\na X_{\pm}^{-1}\circ X_{\pm})  +  (\na X_{\pm}^{-1}\circ X_{\pm})(\na \mathbb V_{\pm}(\sigma)\circ X_{\pm}) = 0.
\la{odenaA}
\ee
We use the bound \eqref{naVlifty} and also the Eulerian fact that $\|\na\mathbb V_{\pm}(\sigma)\|_{C^{\alpha}} \le C\|\sigma\|_{C^{\alpha}}$. This is due to the boundedness of Riesz and Bessel transforms in $C^{\alpha}$. Then
\be
\| \na \mathbb V_{\pm}(\sigma)\circ X_{\pm}\|_{C^{\alpha}} \le A(t)\|\tau\|_{C^{\alpha}}
\la{naVcalpha}
\ee
and consequently after judicious use of \eqref{naVlifty} and \eqref{naVcalpha} we have
\be
\|\na X_{\pm}^{-1}\|_{C^{\alpha}} \le CA(t)^3
\ee
We  omit further details. 
\end{proof}

\section{Global existence and uniqueness of  weak solutions in Yudovich class}\la{yud}

The global existence and uniqueness of weak solutions with $\sigma_\pm \in \mathbb Y = (L^1\cap L^{\infty})(\Rr^2)$ is established here. The result holds for both the right-handed and the left-handed models with similar proofs. We show here the original right-handed case. We introduce the notion of weak solution in Yudovich class. 
\begin{defi}\la{defn:weaknondeg}
    Given initial condition $\tau = (\tau_+, \tau_-) \in \mathbb Y^2$ and $T > 0$, we say that  $\sigma = (\sigma_+, \sigma_-)$  is a Yudovich class solution of \eqref{sigmaeq} on $[0,T]$ with initial data $\tau$, if 
   $\sigma_\pm \in L^\infty([0,T]; (L^1\cap L^\infty)(\R^2))$, the equation
   
   \begin{equation}\la{eq:continuity}
   \pa_t \sigma_{\pm} + \na\cdot(\V_{\pm}(\sigma)\sigma_{\pm} ) = 0
   \end{equation}
   holds in $C([0, T); W^{-1, p}(\Rr^2))$ for $1<p<\infty$, with $\V_{\pm}(\sigma)$ defined in \eqref{Vpm},
   and if 
   \[
   \tau (x) = \lim_{t \to 0}\sigma(x,t) \: \text{ in}\; (W^{-1,p}(\Rr^2))^2.
   \]
   \end{defi}
   Here, we recall \eqref{Vpm}:
     \[
     \V_{\pm}(\sigma)= K_\U * \left(\frac{1}{2}(\sigma_+ - \sigma_-)\right) \pm K_\B* \left(\frac{1}{2}(\sigma_+ + \sigma_-)\right),
     \] 
     where $K_\U, K_\B$ are kernels of the operators $\U, \B$ (cf. \eqref{mathbbU}, \eqref{mathbbB})

With the above definition, we prove the following theorem:

\begin{thm}\label{thm:Yudovich}
    Fix any initial data $\tau \in (L^1 \cap L^\infty)(\R^2)$. There exists a unique Yudovich class  solution $\sigma = (\sigma_+, \sigma_-)$ with initial data $\tau$ on $[0,T]$ for any $T \in (0,\infty)$.
    Moreover, the velocity vector fields $\V_{\pm}$ are log-Lipschitz in space. Define the Lagrangian trajectory $X(a,t) = (X_+(a,t), X_-(a,t))$ to be the (unique) solution to ODEs:
    \begin{equation}
        \label{trajectory}
        \frac{dX_\pm (a,t)}{dt} = \V_\pm (X_\pm(a,t),t),\quad X_\pm(a,0) = a.
    \end{equation}
    Then $X(a,t)$ is H\"older continuous with respect to $a$. More precisely, we have
        \begin{subequations} \la{est:holderX}
        \be
        |X_\pm^\epsilon(\alpha_1,t) -X_\pm^\epsilon(\alpha_2,t)| \le C|\alpha_1 - \alpha_2|^{\beta(t)},
        \ee
        \be
        |(X_\pm^\epsilon)^{-1}(x_1,t) -(X_\pm^\epsilon)^{-1}(x_2,t)| \le C|x_1 - x_2|^{\beta(t)},
        \ee
        \end{subequations}
    where $\beta(t) = \exp(-C\|\tau_\pm\|_{L^1 \cap L^\infty}t)$ for some universal constant $C > 0$.
\end{thm}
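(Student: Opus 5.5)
The proof follows Yudovich's scheme, adapted to the coupled system. Existence goes through mollification and compactness, and uniqueness through an $L^2$ energy estimate on the difference of velocities.

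\emph{Step 1: kernel bounds.} First I record the properties of the kernels $K_\U$ and $K_\B$. The kernel $K_\U$ is the Biot--Savart kernel $\frac{1}{2\pi}\frac{x^\perp}{|x|^2}$. The kernel $K_\B=\na^\perp G$, where $G$ is the Bessel potential $\frac{1}{2\pi}K_0(|x|)$. This kernel has the same $|x|^{-1}$ singularity at the origin and decays exponentially at infinity. Both kernels therefore satisfy $|K(x)|\le C|x|^{-1}$ and $|\na K(x)|\le C|x|^{-2}$. The standard Yudovich computation (splitting the integral at $|x-y|\sim \delta$ and at $|x-y|\sim 1$) then gives, for $f\in\mathbb Y$,
\[
\|K*f\|_{L^\infty}\le C\|f\|_{L^1\cap L^\infty},\qquad |K*f(x)-K*f(y)|\le C\|f\|_{L^1\cap L^\infty}\,|x-y|\bigl(1+\log^+\tfrac{1}{|x-y|}\bigr).
\]
Hence $\V_\pm(\sigma)$ is bounded and log-Lipschitz, with constants controlled by $\|\sigma\|_{L^1\cap L^\infty}$.

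\emph{Step 2: existence and the Lagrangian statements.} Mollify the data, $\tau^\eps=\rho_\eps*\tau$. Theorem \ref{thm:Calphagwp} supplies global smooth solutions $\sigma^\eps$ with flows $X^\eps_\pm$, and these satisfy $\|\sigma^\eps_\pm(t)\|_{L^p}=\|\tau^\eps_\pm\|_{L^p}\le\|\tau_\pm\|_{L^p}$ uniformly in $\eps$. By Step 1, the velocities $\V_\pm^\eps$ are uniformly bounded and uniformly log-Lipschitz. Osgood's lemma applied to $\frac{d}{dt}|X^\eps(a_1)-X^\eps(a_2)|\le C\|\tau\|\,|X^\eps(a_1)-X^\eps(a_2)|(1+\log^+\frac{1}{|\cdot|})$ yields the H\"older bounds \eqref{est:holderX}, uniformly in $\eps$, with exponent $\beta(t)=e^{-C\|\tau\|t}$. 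The same argument run backward in time gives the bounds for the inverse maps. Time continuity follows from boundedness of the velocity. Arzel\`a--Ascoli (locally uniform, with a diagonal argument) then gives a subsequence $X^\eps_\pm\to X_\pm$. Weak-$*$ compactness gives $\sigma^\eps_\pm\rightharpoonup\sigma_\pm$ in $L^\infty(L^1\cap L^\infty)$. By measure preservation we can identify the limit as $\sigma_\pm=\tau_\pm\circ X_\pm^{-1}$, after first approximating $\tau$ by a continuous function in $L^1$.

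The nonlinear term needs more care. The velocities $\V^\eps_\pm$ converge locally uniformly, because convolution with kernels that are locally integrable and decay (or are $L^2+L^\infty$) maps weakly convergent, bounded $\mathbb Y$ sequences to sequences that are compact in $C_{loc}$. Alternatively, $\pa_t\sigma^\eps$ is bounded in $W^{-1,p}$, so Aubin--Lions gives strong convergence of the velocities. Either way, $\V^\eps\sigma^\eps\rightharpoonup\V\sigma$, and \eqref{eq:continuity} holds in $C([0,T);W^{-1,p})$ with the initial trace attained. The limit $X$ solves \eqref{trajectory}, and this solution is unique by the log-Lipschitz bound.

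\emph{Step 3: uniqueness (main obstacle).} Let $\sigma,\tilde\sigma$ be two Yudovich solutions with the same data. Because $K_\U$ is not in $L^2$ at infinity, I would work with stream functions rather than velocities directly. Set $\delta\omega=\frac12(\delta\sigma_+-\delta\sigma_-)$ and $\delta F=\frac12(\delta\sigma_++\delta\sigma_-)$, so that the velocity differences are $\na^\perp\D^{-1}\delta\omega\pm\na^\perp(\mathbb I-\D)^{-1}\delta F$. Define the energy
\[
E(t)=\|\na\D^{-1}\delta\omega\|_{L^2}^2+\|(\mathbb I-\D)^{-1/2}\delta F\|_{L^2}^2 .
\]
Finiteness of $E$ needs zero mean of $\delta\omega$; this holds since $\int\sigma_\pm$ is conserved. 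The first term then follows Yudovich. For the second term I need $\|\na(\mathbb I-\D)^{-1}\delta F\|_{L^2}^2\le E$, which holds. Testing the difference equations against the corresponding potentials gives
\[
E'\le C\sum_\pm\int|\na\V_\pm(\tilde\sigma)|\,|\delta\V|^2+\text{cross terms}.
\]
The cross terms, of the form $\int\delta\V\cdot\na\Phi\,\sigma$, are controlled by $\|\sigma\|_{L^\infty}\|\delta \V\|_{L^2}^2$. I expect the most delicate point to be checking that the $\pm$ structure indeed makes every term quadratic in $\delta\V$. Once this is done, the Calder\'on--Zygmund bound $\|\na\V\|_{L^p}\le Cp\|\tilde\sigma\|_{L^p}$ and H\"older's inequality give $E'\le Cp\,E^{1-1/p}$ for all large $p$. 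Optimizing in $p$, as in Yudovich's argument, forces $E\equiv0$ on $[0,T]$, hence $\sigma=\tilde\sigma$.
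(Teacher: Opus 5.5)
Your existence argument is fine. It is a Lagrangian route (uniform Osgood--H\"older bounds for $X^\eps_\pm$ and their inverses, Arzel\`a--Ascoli, $\sigma_\pm=\tau_\pm\circ X_\pm^{-1}$), whereas the paper uses Eulerian compactness (Rellich plus the $W^{-1,p}$ bound on $\pa_t\sigma^\eps$). Your route gives $\sigma^\eps_\pm(t)\to\sigma_\pm(t)$ in $L^1$ for every $t$, so the nonlinear limit and the H\"older bounds come directly. Your uniqueness energy also coincides with the paper's up to normalization, since $\|(\mathbb I-\D)^{-1/2}\delta F\|_{L^2}^2=\|\na\phi\|_{L^2}^2+\|\phi\|_{L^2}^2$ with $\phi=(\mathbb I-\D)^{-1}\delta F$ and $\psi=\D^{-1}\delta\omega$. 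The gap is the step you defer, which is exactly the nontrivial content of the paper's uniqueness proof.

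Differentiating $E$ forces the multipliers $\phi$ for the $\delta F$-equation and $-\psi$ for the $\delta\omega$-equation. This produces the Yudovich terms $\int\pa_k(\na^\perp\Psi)_j(\pa_j\psi\,\pa_k\psi+\pa_j\phi\,\pa_k\phi)$ (with $\Psi=\D^{-1}\bar\omega$, bars denoting averages) and a cross term proportional to $\int\bar F\,\na^\perp\psi\cdot\na\phi$; you have both. It also produces the action of the background magnetic stream $\Phi=(\mathbb I-\D)^{-1}\bar F$ on the differences:
\[
\int_{\R^2}(\na^\perp\Phi)_j\bigl(\delta\omega\,\pa_j\phi-\delta F\,\pa_j\psi\bigr)\,dx,\qquad \delta\omega=\D\psi,\quad \delta F=\phi-\D\phi .
\]
This term is of neither form in your inequality. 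It contains $\delta\omega$ and $\delta F$ themselves, one derivative more than $E$ controls. Neither half can be integrated by parts on its own, because $\D\psi$ meets $\na\phi$ rather than $\na\psi$. What closes the argument is that the halves combine, via the identity
\[
\D\psi\,\pa_j\phi+\D\phi\,\pa_j\psi=\pa_k\bigl(\pa_k\psi\,\pa_j\phi+\pa_k\phi\,\pa_j\psi\bigr)-\pa_j(\na\psi\cdot\na\phi).
\]
Using this and $\na\cdot\na^\perp\Phi=0$, the term equals
\[
-\int_{\R^2}\pa_k(\na^\perp\Phi)_j\bigl(\pa_k\psi\,\pa_j\phi+\pa_k\phi\,\pa_j\psi\bigr)\,dx-\int_{\R^2}\phi\,\na^\perp\Phi\cdot\na\psi\,dx .
\]
The first piece is a Yudovich term with $\|\na^2\Phi\|_{L^p}\lesssim p$. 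The second is \emph{not} quadratic in $\delta\V$, contrary to your expectation. It is bounded by $\|\na\Phi\|_{L^\infty}\|\na\psi\|_{L^2}\|\phi\|_{L^2}$, which is why the zeroth-order part of $E$ is needed. Only after this step does $E'\le Cp\,E^{1-1/p}$ follow.

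In your $\sigma_\pm$ variables the bookkeeping is transparent. Set $\chi_\pm=\psi\pm\phi$, so that $\delta\V_\pm=\na^\perp\chi_\pm$. Then $\fr{dE}{dt}=-\int\chi_-\,\pa_t\delta\sigma_++\int\chi_+\,\pa_t\delta\sigma_-$, with $\delta\sigma_+=\D\chi_-+\phi$ and $\delta\sigma_-=-\D\chi_++\phi$. So $\delta\sigma_+$ must be tested against the stream function of $\delta\V_-$, not of $\delta\V_+$. The transport term then gives $\int\pa_k(\V_+(\tilde\sigma))_j\,\pa_j\chi_-\,\pa_k\chi_-$, the cross term you anticipated, and a $\phi$-term. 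Pairing each $\delta\sigma_\pm$ with its own $\chi_\pm$ would instead differentiate the indefinite quantity $\|\na\phi\|^2+\|\phi\|^2-\|\na\psi\|^2$, so "corresponding potentials" has to be taken in this crossed sense.

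A smaller point: zero mean of $\delta\omega$ does not by itself put $\na\D^{-1}\delta\omega$ in $L^2$ when $\delta\omega$ is only in $L^1\cap L^\infty$. A radial zero-mean example with tail $\sim|x|^{-2}(\log|x|)^{-3/2}$ fails. What works is the weak formulation: $\pa_t\V_\pm(\sigma)$ is a sum of zeroth-order operators applied to $\V_+\sigma_+$ and $\V_-\sigma_-$, which lie in $L^2$. Hence $\delta\V_\pm(t)=\int_0^t\pa_t\delta\V_\pm\,ds\in L^2$, since the data coincide. The paper does not address this point either.
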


\begin{proof}
We remark that the existence part of Theorem \ref{thm:Yudovich} can be adapted from the single-fluid Yudovich theory for 2D Euler equation, so we provide only the outlines of the proof. The uniqueness part is more subtle and we provide a full proof in the spirit of the original proof of \cite{yudovich}. A proof using exponential integrability and Legendre transform, as in \cite{cde} is also possible, but we do not pursue this here.

The proof for existence is presented in the following steps:
\begin{enumerate}
    \item \textbf{Introduction of $\epsilon$--mollified solution.} Given $\epsilon \in (0,1)$, we let $\phi_\epsilon$ be the standard mollifier at spatial scale $\epsilon$ and define the mollified initial data as $\tau_\pm^\epsilon := \phi_\epsilon * \tau_\pm$. In view of Theorem \ref{thm:Calphalwp} and \ref{thm:Calphagwp}, we let $\sigma^\epsilon = (\sigma_+^\epsilon, \sigma_-^\epsilon)$ be the unique, global, regular solution initiated by $\tau^\eps := (\tau_+^\epsilon,\tau_-^\epsilon)$, with corresponding velocity $\V^\epsilon = (\V^\epsilon_+, \V^\epsilon_-)$.

    \item \textbf{Bounds on mollified variables.} Similar to the case of 2D Euler equation,  the following bounds are proved: given $\epsilon \in (0,1)$, $T > 0$, and $t\in(0,T)$, we have
    \begin{enumerate}
        \item (Uniform bound on mollified vorticity)
        \be\la{est:Lpweps}
        \|\sigma^\epsilon\|_{L^p} \le \|\tau^\eps\|_{L^p} \le \|\tau\|_{L^p},\quad p \in [1,\infty].
        \ee
        \item (Uniform bound on mollified velocity) 
        \be\la{est:LinftyVeps}
        \|\V_\pm^\epsilon\|_{L^\infty} \le C\|\sigma_+^\epsilon \pm \sigma_-^\epsilon\|_{L^1\cap L^\infty} \le C\|\tau\|_{L^1\cap L^\infty},
        \ee
        \be\la{est:W1pVeps}
        \|\na \V^\eps_\pm\|_{L^p} \le C(p)\|\sigma_+^\epsilon \pm \sigma_-^\epsilon\|_{L^1\cap L^\infty} \le C(p)\|\tau\|_{L^1\cap L^\infty}.
        \ee

    \end{enumerate}
    We briefly explain how to obtain the aforementioned bounds: the bound \eqref{est:Lpweps} follows directly from \eqref{normp} together with properties of the standard mollifier. The proof of \eqref{est:LinftyVeps}--\eqref{est:W1pVeps} follows from a similar argument to that treating 2D Euler equation, except for bounds concerning the kernel $K_\B$. However, from \eqref{mathbbB} we see that $K_\B = \nabla^\perp G$, where $G$ is the Bessel potential of order 2 on $\R^2$. Then $K_\B(x,y) \sim |x-y|^{-1}$ near the diagonal set and decays exponentially at spatial infinity. Therefore, we obtain similar potential-theoretical bounds for contributions by $K_\B$ to those arising from $K_\U$. 
    
    \item \textbf{Passing to the Limit.} From \eqref{est:W1pVeps} and Rellich's compact embedding theorem, we have $\V^\eps_\pm \to \V_\pm$ in $L^\infty([0,T]; L^q_{loc})$ for arbitrary $q \in (1,\infty)$ and some $\V_\pm$. Next, the uniform bound \eqref{est:Lpweps} implies that $\sigma^\eps \overset{*}{\rightharpoonup} \sigma$ in $L^\infty([0,T]; L^1\cap L^\infty)$.  
    The above convergence properties are sufficient to verify that the Biot-Savart law \eqref{Vpm} holds after passing to the limit: the operator $\sigma \mapsto \V_\pm (\sigma)$ is a bounded operator from $L^q$ to $W^{1,q}_{loc}$ for all $q \in (1,\infty)$. Then $\V_\pm^\eps \overset{*}{\rightharpoonup} \V_\pm(\sigma)$ in $L^\infty([0,T]; W^{1,q}_{loc})$. Since we also have strong convergence $\V^\eps_\pm \to \V_\pm$ in $L^\infty([0,T]; L^q_{loc})$ for arbitrary $q \in (1,\infty)$. Then we must have $\V_\pm = \V_\pm(\sigma)\in L^\infty([0,T]; W^{1,q}_{loc})$. The above facts suffice to guarantee that $\sigma_\pm$ verifies \eqref{eq:continuity} in $L^\infty([0,T]; W^{-1,p})$ sense. 
    Since $\sigma^\eps_\pm$ verifies \eqref{eq:continuity} strongly (thus weakly), we observe that for any $\eta \in W^{1,p_*}$ with compact support, there is:
    \begin{equation}\label{est:dtsigma}
    \left|\int_{\R^2} \pa_t \sigma_{\pm}^\eps \eta dx\right| = \left|\int_{\R^2} \sigma_\pm^\eps \V_\pm^\eps \cdot \na \eta dx\right| \le C\|\tau\|_{L^\infty\cap L^1}^2 \|\eta\|_{W^{1,p_*}},
    \end{equation}
    which implies that $\pa_t \sigma_\pm^\eps$ is uniformly bounded in $L^\infty([0,T]; W^{-1,p})$, with $1/p + 1/p_* = 1$. After passing to the limit, the continuity with respect to time follows immediately from $\pa_t \sigma_\pm \in L^\infty([0,T]; W^{-1,p})$.

    \item \textbf{Additional Regularity of $\V$ and corresponding Lagrangian.} The Log-Lipschitz property of $\V$ follows from the potential-theoretical bound below:
    \be \la{est:loglipV}
        |\V_\pm(x,t) - \V_\pm (y,t)| \le C\|\tau_\pm\|_{L^1 \cap L^\infty} |x-y|(1 - \min\{0, \log|x-y|\}),\quad x,y \in \R^2.
    \ee
    Again, the above bound indeed holds since the Bessel potential only differs from the Green's function far from the diagonal set. The bound \eqref{est:holderX} follows from \eqref{est:loglipV} and a standard ODE argument.

\end{enumerate}
We then proceed to prove uniqueness by using the pseudo-energy method à la Yudovich \cite{yudovich}. Take two solutions $\omega^{(i)}$ and $F^{(i)}$, $i=1,2$ and denote
\[
\Omega = \fr{1}{2}( \omega^{(1)} + \omega^{(2)}), \quad F = \fr{1}{2}( F^{(1)} + F^{(2)}),
\]
and
\[
\omega = \fr{1}{2}( \omega^{(1)} - \omega^{(2)}), \quad f = \fr{1}{2}( F^{(1)} - F^{(2)}).
\]
Taking differences, the system \eqref{origeq}  with $\epsilon=1$ becomes
\be
\left\{
\ba
\pa_t f +\{ \Psi, f\} + \{\psi, F\} + \{\phi, \Omega\} + \{\Phi, \omega\} = 0,\\
\pa_t \omega + \{ \Psi, \omega\} + \{\psi, \Omega\} + \{\phi, F\} + \{\Phi, f\} = 0.
\ea
\right.
\la{difforig}
\ee

We multiply the first equation by $\phi$, the second by $-\psi$ and integrate in space by parts:
\begin{align*}
    \frac12 \frac{d}{dt} \left(\|\na\phi\|_{L^2}^2 + \|\phi\|_{L^2}^2\right) &= \int_{\R^2} \nabla^\perp \Psi \cdot \na\phi (-\Delta\phi + \phi) dx + \int_{\R^2} \na^\perp \psi \cdot \na \phi F dx + \int_{\R^2} \na^\perp \Phi \cdot \na\phi \omega dx\\
    &= -\int \na^\perp \Psi \cdot \na\phi(\Delta\phi) dx +\int_{\R^2} \na^\perp \psi \cdot \na \phi F dx + \int_{\R^2} \na^\perp \Phi \cdot \na\phi \omega dx\\
    &= \int_{\R^2} \pa_k (\na^\perp\Psi)_j\pa_j\phi \pa_k\phi dx +\int_{\R^2} \na^\perp \psi \cdot \na \phi F dx + \int_{\R^2} \na^\perp \Phi \cdot \na\phi \omega dx,\\
    \frac12\frac{d}{dt} \|\na \psi\|_{L^2}^2 &= -\int_{\R^2} \na^\perp \Psi \cdot \na \psi (\Delta \psi) dx -\int_{\R^2} \na^\perp \Phi \cdot \na\psi f dx-\int_{\R^2}\na^\perp \phi \cdot \na \psi F dx\\
    &=\int_{\R^2} \pa_k (\na^\perp\Psi)_j\pa_j\psi \pa_k\psi dx-\int_{\R^2} \na^\perp \Phi \cdot \na\psi f dx+\int_{\R^2}\na^\perp \psi \cdot \na \phi F dx.
\end{align*}
Summing the above two identities, we obtain
\begin{equation}
    \label{sumenergy}
    \begin{split}
    \frac12 \frac{d}{dt}\left(\|\na \psi\|_{L^2}^2+\|\na\phi\|_{L^2}^2 + \|\phi\|_{L^2}^2\right) &= \int_{\R^2} \pa_k (\na^\perp\Psi)_j (\pa_j\psi \pa_k\psi + \pa_j\phi \pa_k\phi) dx \\
    &\quad+ 2\int_{\R^2}(\na^\perp \psi)_j \pa_j \phi F dx + \int_{\R^2} (\na^\perp \Phi)_j (\pa_j\phi \omega - \pa_j\psi f).
    \end{split}
\end{equation}

We focus on the third term on the right-hand-side of \eqref{sumenergy}. The term multiplying $(\na^{\perp} \Phi)_j$ is
\[
\D \psi \pa_j\phi + (\D\phi -\phi)\pa_j\psi = \pa_k(\pa_k\psi \pa_j \phi + \pa_k\phi \pa_j\psi) -\pa_j(\pa_k\phi\pa_k\psi) - \phi \pa_j\psi
\]
We plug the above identity into \eqref{sumenergy} and integrate by parts to obtain
\begin{equation}
    \label{sumenergy2}
    \begin{split}
    \frac12 \frac{d}{dt}\left(\|\na \psi\|_{L^2}^2+\|\na\phi\|_{L^2}^2 + \|\phi\|_{L^2}^2\right) &= \int_{\R^2} \pa_k (\na^\perp\Psi)_j (\pa_j\psi \pa_k\psi + \pa_j\phi \pa_k\phi) dx \\
    &\quad- \int_{\R^2} \pa_k(\na^\perp \Phi)_j(\pa_k\psi \pa_j\phi + \pa_k\phi\pa_j\psi) dx\\
    &\quad+ 2\int_{\R^2}\na^\perp \psi \cdot \na \phi F dx - \int_{\R^2}\na^\perp \Phi\cdot \na \psi \phi dx.
    \end{split}
\end{equation}
Now we recall that 
$
-\Delta \Phi + \Phi = F,\Delta \Psi = \Omega.
$
Then the Calder\'on-Zygmund estimate informs us that for $p \in (1,\infty)$ sufficiently large,
\begin{equation}
    \label{est:LpCZ}
    \|\na^2\Phi\|_{L^p} \lesssim p \|F\|_{L^p} \lesssim p\|F_0\|_{L^p},\quad \|\na^2 \Psi\|_{L^p} \lesssim p\|\Omega\|_{L^p} \lesssim p\|\Omega_0\|_{L^p},
\end{equation}
where we also used the fact that $L^p$ norms of $\omega^{(i)}$, $F^{(i)}$, $i = 1,2$, are a priori bounded. Moreover, we have the following gradient estimates:
\begin{equation}
    \label{est:gradientest}
    \|\na \psi\|_{L^\infty} + \|\na \phi\|_{L^\infty} \le m, \|\na \Phi\|_{L^\infty} \le M,
\end{equation}
where $m, M$ depend only on the $L^1\cap L^\infty$ norm of the initial data. 

Define $E(t) = (\|\na \psi\|_{L^2}^2+\|\na\phi\|_{L^2}^2 + \|\phi\|_{L^2}^2)^\frac12$ and fix $\epsilon > 0$ sufficiently small. Using \eqref{sumenergy2}, \eqref{est:gradientest}, and H\"older inequality, we have
\begin{equation}\label{est:energy}
\begin{split}
    E\frac{dE}{dt} &\le m^\epsilon (\|\na^2 \Psi\|_{L^{2/\epsilon}} + \|\na^2 \Phi\|_{L^{2/\epsilon}})E^{2-\epsilon} + 2(\|F\|_{L^\infty} + \|\na \Phi\|_{L^\infty}) E^2\\
    &\le C \left(m^\epsilon \left(\frac{2}{\epsilon}M_0\right) E^{2-\epsilon} + M E^2\right),
\end{split}
\end{equation}
where $m, M, M_0$ are constants which only depend on the initial data. Here, we also used the \textit{a priori} bound $\|F\|_{L^\infty} \lesssim \|\tau\|_{L^\infty}$.

We now claim that given $\epsilon$ sufficiently small and $t \le \frac{1}{8CM_0} =: t_0$, the following bound holds:
\begin{equation}
    \label{est:bootstrap}
    E(t)^\epsilon \le 4Cm^\epsilon M_0t.
\end{equation}
We prove this claim by a bootstrap argument. It is clear that \eqref{est:bootstrap} holds on $[0,\delta]$ for some $\delta$ sufficiently small thanks to $E(0) = 0$. Assuming \eqref{est:bootstrap}, we have $E(t)^\epsilon \le \frac{m^\epsilon}{2}$ on $[0,t_0]$ and therefore
$$
\epsilon E^\epsilon m^{-\epsilon} \le \frac\epsilon2 \le \frac{M_0}{M},
$$
after choosing $\epsilon$ sufficiently small. Rearranging the above inequality, we arrive at
$$
m^\epsilon \left(\frac{1}{\epsilon}M_0\right) E^{-\epsilon} > M,
$$
which gives the following after combining with \eqref{est:energy}:
$$
\frac{dE}{dt} \le C\left(m^\epsilon \left(\frac{2}{\epsilon}M_0\right) E^{1-\epsilon} + M E\right) \le Cm^\epsilon \frac3\epsilon M_0 E^{1-\epsilon},\quad t \in [0,t_0].
$$
Integrating the above inequality with respect to $t$ yields
$$
E(t)^\epsilon \le 3Cm^\epsilon M_0 t,\quad t \in [0,t_0],
$$
which proves the claim. Now for $t \in [0,t_0]$, we take $\epsilon \to 0$ in \eqref{est:bootstrap} and conclude $E(t) = 0$ on $t \in [0,t_0]$. The uniqueness for all $t > 0$ follows from iterating the same procedure on $[kt_0, (k+1)t_0]$ for all $k \in \mathbb{N}$.

\end{proof}
 
\section{Merger of patches for the left-handed system} \la{patchmerge}
Patches are solutions which have $F\pm\omega$ constant on domains. The assumptions are that the initial data obey $(F\pm\omega)(x,0) = \mathbf 1_{\Omega_0^{\pm}}(x)$, the characteristic functions of bounded sets $\Omega_0^{\pm}$ in $\Rr^2$ with smooth boundaries $\Gamma_0^{\pm} = \pa\Omega_0^{\pm}$. Then it follows from the results of the previous section that $(F\pm\omega)(x,t) = \mathbf 1_{\Omega_t^{\pm}}(x)$ where $\Omega_t^{\pm}$ is the image of $\Omega_0^{\pm}$ under the flow with stream function $\psi \pm \phi$. 
 If the patch boundaries $\Gamma_t^{\pm}$ are initially smooth, then they remain smooth, by analogy with the well known situation for fluids (\cite{bc},\cite{chemin}), as long as the patches do not intersect. However, because the two regular flows are different, nothing prevents the merger of the two patches.  The change of topology of the streamlines of $\phi$ which are magnetic lines is due to the fact that they are not carried by the flow of electrons.

In this section, we consider the system \eqref{active} with the left-handed equation of state \eqref{pmleft}. We start with the following identities in view of the left-handed equation of state \eqref{pmleft} and the definitions of operators $\mathbb{K}, \mathbb{L}, \mathbb{U},$ and $\mathbb{B}$:
$$
v_+ = \U(\omega) + \B(F) = \U(\sigma_+) + \B(F) - \U(F) = \U(\sigma_+) - 2\na^\perp \mathbb{K}(F),
$$
$$
v_- = \U(\omega) - \B(F) = -\U(\sigma_-) - (\B(F) - \U(F)) = -\U(\sigma_-) + 2\na^\perp \mathbb{K}(F).
$$
With the above identities, we rewrite \eqref{sigmaeq} as
\begin{equation}
    \label{sigmaeq2}
    \begin{cases}
    \pa_t \sigma_+ + \U(\sigma_+) \cdot \nabla \sigma_+ -2\na^\perp \mathbb{K}(F)\cdot \nabla \sigma_+ = 0,\\
    \pa_t \sigma_- - \U(\sigma_-) \cdot \nabla \sigma_- +2\na^\perp \mathbb{K}(F)\cdot \nabla \sigma_- = 0.
    \end{cases}
\end{equation}

\subsection{The unscreened equations} \label{subsect:lefthandedunscreen}
As mentioned in the introduction, the left-handed system \eqref{sigmaeq2} is dominated by the following leading-order system at large frequencies:
\begin{equation}
    \label{reducedeq}
    \pa_t \sigma_\pm \pm \U(\sigma_\pm)\cdot \nabla \sigma_\pm = 0,
\end{equation}
In this subsection, we consider a merger of the above leading-order system.

The leading-order system \eqref{reducedeq} consists of two independent incompressible Euler equations in two dimensions, with the second one having a different convention of rotation. In particular, both equations in \eqref{reducedeq} admit circular and elliptical uniformly rotating patches. We consider the initial data
$
\tau_\pm(x) = \mathbf 1_{\Omega_0^\pm }(x),
$
where 
\begin{equation}\label{data}
\Omega_0^+ = \{(x,y) \in \R^2\; |\; \frac{(x-d-2R)^2}{R^2} + \frac{y^2}{(2R)^2} < 1\},\quad \Omega_0^- = \{(x,y) \in \R^2\;|\; \frac{x^2 + y^2}{R^2} < 1\},
\end{equation}
where $R > 0$, $d \in (0,R)$ are parameters to be determined. Identifying $\R^2$ with $\mathbb{C}$ and writing boundaries $\Gamma_0^\pm = \pa \Omega_0^\pm$ to be parametrized by $z_\pm^0(\alpha)$ with $z_\pm^0(0) = z_\pm^0 (2\pi)$, we have
$$
z_+^0(\alpha) = R\left[e^{i\frac\pi2}\left(\frac{3}{2}e^{i\alpha} + \frac{1}{2}e^{-i\alpha}\right) + \left(2+\frac{d}{R}\right)\right],\quad z_-^0(\alpha) = Re^{i\alpha}.
$$
Denoting the parametrization at time $t$ as $z_\pm(\alpha,t)$, from Euler dynamics we obtain the following explicit formulas (see e.g. \cite{farfield}):
\begin{equation}\label{background}
\begin{aligned}
z_+(\alpha,t) &= e^{i\frac\pi2}\left(\frac{3R}{2}\exp\left({i(\alpha-\frac{t}{2\pi}\frac{A}{\frac94 R^2})}\right) + \frac{R}{2}e^{-i\alpha}\right) + (d+2R)\\
&= \frac{R}{2}e^{i\frac{\pi}{2}}\left(3e^{-i\frac49 t}e^{i\alpha} + e^{-i\alpha}\right)+ (d+2R),\\
z_-(\alpha,t) &= Re^{-\frac{i}{2}t}e^{i\alpha},
\end{aligned}
\end{equation}
where in the computation above, $A$ is the area of the ellipse, and we have
$$
\frac{A}{\frac94 R^2} = \frac{\pi\left(\frac94 R^2 - \frac14 R^2\right)}{\frac94 R^2} = \frac89 \pi.
$$
In particular, the above formulas inform us that the patch $\Omega_t^+$ rigidly rotates counterclockwisely with angular velocity $\frac49$, and it takes $T_0 = \frac{9\pi}{4}$ to rotate by $\pi$. Let us denote $T_* = T_*(d,R)$ be the first time such that $\overline{\Omega_t^+} \cap\overline{\Omega_t^-} \neq \emptyset$. It is clear that $T_*$ is well defined whenever $d \in (0,R)$. Moreover,
$$
\lim_{d \to 0} T_*(d,R) = 0,\quad \lim_{d \to R} T_*(d,R) = \frac{9\pi}{8}.
$$
By elementary planar geometry, we observe that ${\Omega_t^+} \cap {\Omega_t^-} \neq \emptyset$ for all $t \in (T_*, \frac{9\pi}{4} - T_*)$, and ${\Omega_t^+} \cap {\Omega_t^-} = \emptyset$ otherwise. Thus, for the patch solution to the leading order model \eqref{reducedeq} with initial data $\tau_\pm = \mathbf 1_{\Omega_0^\pm }(x)$ given in \eqref{data}, we already obtain a merger of patches. Moreover, in view of the following identity:
\begin{equation}\label{defn:Fpatch}
F(x,t) = \begin{cases}
    \frac12,& x \in {\Omega_t^+} \Delta {\Omega_t^-},\\
    1,& x \in {\Omega_t^+} \cap {\Omega_t^-},
\end{cases}
\end{equation}
we observe  the topology change for the $F$-patches in the following sense: $\{F(\cdot,t) = 1\} \neq \emptyset$ for all $t \in (T_*, \frac{9\pi}{4} - T_*)$, and $\{F(\cdot,t) = 1\}= \emptyset$ otherwise.
\subsection{Merger for the screened equations}
In the screened left-handed equations \eqref{sigmaeq2}, in view of the operator $\mathbb{K}$ defined in \eqref{mathbbK}, we regard the term $\nabla^\perp \mathbb K(F)$ as a small perturbation at sufficiently small scales. We recall that the kernel for the operator $(\Delta - \mathbb{I})^{-1}$ is the Bessel potential $G_B(r) = -\frac{1}{2\pi}K_0(r)$, where $K_0$ is the modified Bessel function of the second kind. Writing
$$
\bar G(r) = -K_0(r) -  \log r,
$$
by definition of $F$, linearity of operators $\B, \U$, and an application of divergence theorem, we have
\begin{align*}
-2\nabla^\perp \mathbb K(F) &=\B(F) - \U(F) = \frac{1}{2}(\B(\sigma_+) - \U(\sigma_+)) + \frac12 \left(\B(\sigma_-) - \U(\sigma_-)\right)\\
&= \frac{1}{4\pi}\nabla^\perp\bigg(\int_{\Omega^+_t} \bar G(|x-y|) dy+ \int_{\Omega^-_t} \bar G(|x-y|) dy\bigg)\\
&= \frac{1}{4\pi} \int_0^{2\pi} \bar G(|x - z_+(\beta)|)\frac{\pa z_+}{\pa \beta}(\beta) d\beta + \frac{1}{4\pi} \int_0^{2\pi} \bar G(|x - z_-(\beta)|)\frac{\pa z_-}{\pa \beta}(\beta) d\beta.
\end{align*}
Therefore, the contour dynamics for the full problem \eqref{sigmaeq2} is written as follows:
\begin{equation}
\label{contour}
\begin{cases}
    \pa_t z_+ = \frac{1}{2\pi}\int_0^{2\pi} \log|z_+(\alpha) - z_+(\beta)| \frac{\pa z_+}{\pa \beta}(\beta) d\beta\\
    \quad+ \frac{1}{4\pi} \int_0^{2\pi} \bar G(|z_+(\alpha) - z_+(\beta)|)\frac{\pa z_+}{\pa \beta}(\beta) d\beta+ \frac{1}{4\pi} \int_0^{2\pi} \bar G(|z_+(\alpha) - z_-(\beta)|)\frac{\pa z_-}{\pa \beta}(\beta) d\beta,\\
    \pa_t z_- = -\frac{1}{2\pi}\int_0^{2\pi} \log|z_-(\alpha) - z_-(\beta)| \frac{\pa z_-}{\pa \beta}(\beta) d\beta\\
    \quad- \frac{1}{4\pi} \int_0^{2\pi} \bar G(|z_-(\alpha) - z_+(\beta)|)\frac{\pa z_+}{\pa \beta}(\beta) d\beta- \frac{1}{4\pi} \int_0^{2\pi} \bar G(|z_-(\alpha) - z_-(\beta)|)\frac{\pa z_-}{\pa \beta}(\beta) d\beta.
\end{cases}
\end{equation}
An observation is the following expansion for the modified Bessel function (see e.g. \cite{bessel,bessel1}): there exists $r_0 > 0$ such that for $ r \in (0,r_0)$:
\begin{equation}\label{tildeG}
K_0(r) = -\left(\log r - \log 2 + \gamma_0\right) + \tilde G(r),
\end{equation}
where $\gamma_0$ is the Euler's constant and $|\tilde G(s)| \lesssim r^2 |\log r|$. Moreover, $\tilde G(r)$ can be extended to $r= 0$ by $\tilde G(0) = 0$ as an $C^1([0,\infty))$ function, and $\tilde G'(r)$ is Log-Lipschitz near the origin. With the aforementioned properties, $\bar G(r)$ is rewritten as follows:
$$
\bar G(r) = (\log 2 - \gamma_0) - \tilde G(r),\quad r \in (0,r_0)
$$
Hence, if we assume that the scale for the patches, namely $R$, is sufficiently small, we have
\begin{align*}
    \int_0^{2\pi} \bar G(|z_\pm(\alpha) - z_\pm(\beta)|)\frac{\pa z_\pm}{\pa \beta}(\beta) d\beta &= \int_0^{2\pi} \left((\log 2 - \gamma_0) - \tilde G(|z_\pm(\alpha) - z_\pm(\beta)|)\right)\frac{\pa z_\pm}{\pa \beta}(\beta) d\beta\\
    &= -\int_0^{2\pi} \tilde G(|z_\pm(\alpha) - z_\pm(\beta)|)\frac{\pa z_\pm}{\pa \beta}(\beta) d\beta,
\end{align*}
where we used periodicity of $z_\pm$ in the second equality above. Thus, we reduce \eqref{contour} to:
\begin{equation}
    \label{contour2}
    \begin{cases}
        \pa_t z_+ = \frac{1}{2\pi}\int_0^{2\pi} \log|z_+(\alpha) - z_+(\beta)| \frac{\pa z_+}{\pa \beta}(\beta) d\beta\\
    \quad- \frac{1}{4\pi} \int_0^{2\pi} \tilde G(|z_+(\alpha) - z_+(\beta)|)\frac{\pa z_+}{\pa \beta}(\beta) d\beta - \frac{1}{4\pi} \int_0^{2\pi} \tilde G(|z_+(\alpha) - z_-(\beta)|)\frac{\pa z_-}{\pa \beta}(\beta) d\beta,\\
    \pa_t z_- = -\frac{1}{2\pi}\int_0^{2\pi} \log|z_-(\alpha) - z_-(\beta)| \frac{\pa z_-}{\pa \beta}(\beta) d\beta\\
    \quad+ \frac{1}{4\pi} \int_0^{2\pi} \tilde G(|z_-(\alpha) - z_+(\beta)|)\frac{\pa z_+}{\pa \beta}(\beta) d\beta + \frac{1}{4\pi} \int_0^{2\pi} \tilde G(|z_-(\alpha) - z_-(\beta)|)\frac{\pa z_-}{\pa \beta}(\beta) d\beta.
    \end{cases}
\end{equation}
The terms involving $\tilde G$ are perturbative because $\tilde G$ is not a singular kernel and has small size at a sufficiently small scale $R$.

Then, the route to prove magnetic reconnection is to study \eqref{contour2} with initial data $\sigma_\pm(x,0) = \mathbf{1}_{\Omega_0^\pm}(x)$, where $\Omega_0^\pm$ is given in \eqref{data}. Denoting the perturbation by $\ze_\pm = \frac{z_\pm - Z_\pm}{R}$, where $Z_\pm$ is the background given by \eqref{background}, we consider the equations satisfied by $\ze_\pm$ equipped with zero initial data. We then prove a long-time stability statement where $\|\ze_\pm\|_{C^{1,\gamma}} \ll 1$ over a sufficiently long time interval.

Let us fix the background $Z_\pm(\alpha, t)$ by the following formulas:
\begin{equation}\label{background2}
\begin{aligned}
Z_+(\alpha,t) &= \frac{R}{2}e^{i\frac{\pi}{2}}\left(3e^{-i\frac49 t}e^{i\alpha} + e^{-i\alpha}\right)+ (d+2R),\\
Z_-(\alpha,t) &= Re^{-\frac{i}{2}t}e^{i\alpha}.
\end{aligned}
\end{equation}
where $d \in (\frac{R}{4}, \frac{3R}{4})$. In particular, they solve the contour dynamics equation for 2D Euler:
$$
\pt Z_\pm = \pm\frac{1}{2\pi}\int_0^{2\pi} \log|Z_\pm(\alpha) - Z_\pm(\beta)| \frac{\pa Z_\pm}{\pa \beta}(\beta) d\beta.
$$
Using the above equation, \eqref{background2}, \eqref{contour2}, and properties of logarithm, a computation yields the following system satisfied by perturbation $\ze_\pm$:
\begin{equation}
    \label{contourper}
    \begin{aligned}
    \pt \ze_+ &= \frac12 \calH \ze_+ + \frac{1}{2\pi}\int_0^{2\pi}\log\left|3e^{-i\frac49 t} - e^{-i(\alpha + \beta)}\right|\pb \ze_+(\beta) d\beta\\
    &\quad + \frac{1}{2\pi R}\int_0^{2\pi}\log\left|1+\frac{2e^{-i\frac\pi2}}{3e^{-i\frac49 t} - e^{-i(\alpha + \beta)}}\frac{\ze_+(\alpha)-\ze_+(\beta)}{e^{i\alpha} - e^{i\beta}}\right|\pb Z_+(\beta) d\beta\\
    &\quad + \frac{1}{2\pi}\int_0^{2\pi}\log\left|1+\frac{2e^{-i\frac\pi2}}{3e^{-i\frac49 t} - e^{-i(\alpha + \beta)}}\frac{\ze_+(\alpha)-\ze_+(\beta)}{e^{i\alpha} - e^{i\beta}}\right|\pb \ze_+(\beta) d\beta\\
    &\quad - \sum_{\pm} \frac{1}{4\pi R} \int_0^{2\pi} \tilde G(|Z_+(\alpha) - Z_\pm(\beta) + R(\ze_+(\alpha) - \ze_\pm(\beta))|) \pb Z_\pm(\beta) d\beta\\
    &\quad - \sum_{\pm} \frac{1}{4\pi} \int_0^{2\pi} \tilde G(|Z_+(\alpha) - Z_\pm(\beta) + R(\ze_+(\alpha) - \ze_\pm(\beta))|) \pb \ze_\pm(\beta) d\beta,\\
    \pt \ze_- &= -\frac12 \calH \ze_- - \frac{1}{2\pi R}\int_0^{2\pi} \log\left|1+ e^{\frac{i}{2} t} \frac{\ze_-(\alpha) - \ze_-(\beta)}{e^{i\alpha} - e^{i\beta}}\right| \pb Z_-(\beta) d\beta\\
    &\quad - \frac{1}{2\pi}\int_0^{2\pi}\log\left|1+ e^{\frac{i}{2} t} \frac{\ze_-(\alpha) - \ze_-(\beta)}{e^{i\alpha} - e^{i\beta}}\right| \pb \ze_-(\beta) d\beta\\
    &\quad + \sum_{\pm} \frac{1}{4\pi R} \int_0^{2\pi} \tilde G(|Z_-(\alpha) - Z_\pm(\beta) + R(\ze_-(\alpha) - \ze_\pm(\beta))|) \pb Z_\pm(\beta) d\beta\\
    &\quad + \sum_{\pm} \frac{1}{4\pi} \int_0^{2\pi} \tilde G(|Z_-(\alpha) - Z_\pm(\beta) + R(\ze_-(\alpha) - \ze_\pm(\beta))|) \pb \ze_\pm(\beta) d\beta,
    \end{aligned}
\end{equation}
Here, we also used the following classical identity (see e.g. \cite{farfield}):
$$
\frac{1}{2\pi}\int_0^{2\pi} \log|e^{i\alpha} - e^{i\beta}| \pb f(\beta) d\beta = \frac12 \calH f(\alpha),
$$
where $\calH$ denotes the Hilbert transform on torus. We recall that we also have the initial data
$$
\ze_\pm(\alpha,0) = 0.
$$

The well-posedness for \eqref{contourper} in $C^{1,\gamma}$, $\gamma \in (0,1)$, follows after establishing a priori H\"older estimates. We start by proving a few useful technical lemmas, the first and second of which concern integral operators with logarithmic kernels.
\begin{lemma}\label{lem:logintegral}
    Given $\gamma \in (0,1)$, assume $f, g \in C^{1,\gamma}$. Then there exists $\eps_0 > 0$ sufficiently small that for $\|f\|_{C^{1,\gamma}} < \eps_0$, the function
    \begin{equation}
        \label{eq:logintegral}
        I(\alpha) := \frac{1}{2\pi}\int_0^{2\pi} \log \left|1 + e^{\frac{i}{2}t}\frac{f(\alpha) - f(\beta)}{e^{i\alpha} - e^{i\beta}}\right| \pb g(\beta) d\beta
    \end{equation}
    is $C^{1,\gamma}$. Moreover, the following estimate holds:
    \begin{equation}
        \label{est:logintegral}
        \|I\|_{C^{1,\gamma}} \le C(\gamma)\|f\|_{C^{1,\gamma}}\|g\|_{C^{1,\gamma}}.
    \end{equation}
\end{lemma}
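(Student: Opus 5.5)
We set $D f(\alpha,\beta) := \frac{f(\alpha)-f(\beta)}{e^{i\alpha}-e^{i\beta}}$ and treat $I$ as a composition of a smooth function with this divided difference. Write $w(\alpha,\beta) := e^{\frac{i}{2}t}Df(\alpha,\beta)$. The first step is a bound on $D f$ itself. Since $e^{i\alpha}-e^{i\beta} = (\alpha-\beta)\, m(\alpha,\beta)$ with $m$ smooth and nonvanishing on $\mathbb T^2$ (away from the diagonal we simply use $|e^{i\alpha}-e^{i\beta}|\gtrsim|\alpha-\beta|$), we have
\[
Df(\alpha,\beta)=m(\alpha,\beta)^{-1}\int_0^1 f'(\beta+s(\alpha-\beta))\,ds .
\]
This gives $\|Df\|_{L^\infty}\lesssim\|f\|_{C^1}$, and $Df$ is $C^\gamma$ jointly in $(\alpha,\beta)$ with norm $\lesssim\|f\|_{C^{1,\gamma}}$.

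Choose $\eps_0$ so that $|w|\le 1/2$. Then $\log|1+w| = \mathrm{Re}\,\Phi(w)$ with $\Phi(w)=\log(1+w)=w+O(|w|^2)$ analytic on $|w|\le 1/2$. Hence $\|\log|1+w|\|_{L^\infty}\lesssim\|f\|_{C^1}$ and $\|\log|1+w|\|_{C^\gamma(\mathbb T^2)}\lesssim\|f\|_{C^{1,\gamma}}$. The $L^\infty$ and $C^\gamma$ bounds for $I$ follow at once, since $\pb g\in C^\gamma$ is integrated against a bounded kernel:
\[
\|I\|_{C^\gamma}\lesssim\|f\|_{C^{1,\gamma}}\|g\|_{C^1}.
\]

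The main obstacle is $\partial_\alpha I$, because differentiating $Df$ in $\alpha$ produces the more singular quotient
\[
\partial_\alpha Df = \frac{f'(\alpha)}{e^{i\alpha}-e^{i\beta}} - \frac{ie^{i\alpha}\,(f(\alpha)-f(\beta))}{(e^{i\alpha}-e^{i\beta})^2},
\]
which is only $O(|\alpha-\beta|^{\gamma-1})$. The plan has two parts.

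\begin{enumerate}
\item Rewrite this as
\[
\partial_\alpha Df=\frac{f'(\alpha)-Df(\alpha,\beta)\, ie^{i\alpha}}{e^{i\alpha}-e^{i\beta}} .
\]
The numerator vanishes to order $|\alpha-\beta|^\gamma$, so $|\partial_\alpha Df|\lesssim\|f\|_{C^{1,\gamma}}|\alpha-\beta|^{\gamma-1}$, which is integrable.

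\item Then
\[
\partial_\alpha I=\frac{1}{2\pi}\int \mathrm{Re}\big[\Phi'(w)\,e^{\frac i2 t}\,\partial_\alpha Df\big]\,\pb g\,d\beta .
\]
To obtain $C^\gamma$ regularity of this singular-type integral, use the symmetry $\partial_\alpha Df=-\partial_\beta Df+(\text{smooth factor})\cdot Df$, coming from the homogeneity of the divided difference, or equivalently the identity $(\partial_\alpha+\partial_\beta)Df = D(f')\cdot(\text{smooth}) + Df\cdot(\text{smooth})$. Substituting it gives
\[
\partial_\alpha \log|1+w| = -\partial_\beta \log|1+w| + \mathrm{Re}\big[\Phi'(w)\,(\text{terms in } Df,\ D(f'))\big],
\]
where the bracketed terms are bounded and H\"older continuous.
\end{enumerate}

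The $-\partial_\beta$ piece is integrated by parts in $\beta$; periodicity removes boundary terms. This leaves $\int\log|1+w|\,\partial_\beta^2 g\,d\beta$, which needs $g\in C^2$. So a better route is to keep the term as a Calder\'on--Zygmund-type kernel $K(\alpha,\beta)$ with $|K|\lesssim|\alpha-\beta|^{\gamma-1}$ and $|\partial_\alpha K|\lesssim|\alpha-\beta|^{\gamma-2}$, both up to $\|f\|_{C^{1,\gamma}}$.

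The $C^\gamma$ estimate of $\alpha\mapsto\int K(\alpha,\beta)h(\beta)\,d\beta$, with $h=\pb g$ bounded, then follows from the standard splitting. Fix $\delta=|\alpha_1-\alpha_2|$. On the region $|\beta-\alpha_1|<2\delta$, bound each term by $\int_{|\beta-\alpha_1|<2\delta}|\alpha-\beta|^{\gamma-1}d\beta\lesssim\delta^\gamma$. On the far region, use the mean value theorem and $\int_{|\beta-\alpha_1|>2\delta}\delta|\alpha-\beta|^{\gamma-2}\,d\beta\lesssim\delta^\gamma$. The kernel bound on $\partial_\alpha K$ requires a second difference quotient of $f$, estimated by $|\alpha-\beta|^{\gamma-2}\|f\|_{C^{1,\gamma}}$ via the same Taylor argument.

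Combining the pieces yields $\|\partial_\alpha I\|_{C^\gamma}\lesssim\|f\|_{C^{1,\gamma}}\|g\|_{C^1}$. The estimate is linear in $f$ because $\Phi'(w)$ is bounded and every term carries a factor of a divided difference of $f$. This gives \eqref{est:logintegral}.
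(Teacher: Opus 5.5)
Your overall structure matches the paper's: you bound $|\partial_\alpha Df|\lesssim\|f\|_{C^{1,\gamma}}|\alpha-\beta|^{\gamma-1}$ and then split the H\"older difference of $\partial_\alpha I$ into near and far regions at $|\alpha-\beta|\sim 2\delta$. Your $L^\infty$, $C^\gamma$ and $C^1$ bounds are fine. There is, however, a genuine gap in the far region.

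The kernel bound $|\partial_\alpha K|\lesssim\|f\|_{C^{1,\gamma}}|\alpha-\beta|^{\gamma-2}$ is not available. $K$ contains $\frac{f'(\alpha)}{e^{i\alpha}-e^{i\beta}}$, so $\partial_\alpha K$ involves $f''(\alpha)$, which does not exist for $f\in C^{1,\gamma}$. No Taylor argument gives a second-difference bound without it. If you work with differences instead, what you can actually prove for $|\alpha_1-\beta|>2\delta$ is
\[
|K(\alpha_1,\beta)-K(\alpha_2,\beta)|\lesssim \frac{\delta^\gamma}{|\alpha_1-\beta|}+\frac{\delta}{|\alpha_1-\beta|^{2-\gamma}}.
\]
The first term is exactly the contribution of $\frac{f'(\alpha_1)-f'(\alpha_2)}{e^{i\alpha_1}-e^{i\beta}}$. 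Integrated against a merely bounded $\partial_\beta g$ over the far region, it gives $\delta^\gamma\log(1/\delta)$. So the standard Calder\'on--Zygmund splitting loses a logarithm and does not close.

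The missing idea is a cancellation in this principal term. The paper first freezes the $\beta$-dependence of the coefficient: it replaces $1/(1+e^{\frac i2 t}w(\alpha,\beta))$ by $1/(1+e^{\frac i2 t}\bar w(\alpha))$, with $\bar w(\alpha)=f'(\alpha)/(ie^{i\alpha})$. The error is $O(|\alpha-\beta|^\gamma)$, which makes that piece integrable. What remains is a $\beta$-independent coefficient of size $\delta^\gamma$ times
\[
\frac{1}{e^{i\alpha}-e^{i\beta}}=\frac{e^{-i\alpha}}{2}\bigl(1-i\cot\tfrac{\alpha-\beta}{2}\bigr).
\]
Oddness of $\cot$ on the symmetric region $|\alpha-\beta|>2\delta$ lets you replace $g'(\beta)$ by $g'(\beta)-g'(\alpha)$. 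Then $g'\in C^\gamma$ reduces the integral to $\int_{|\alpha-\beta|>2\delta}|\alpha-\beta|^{\gamma-1}d\beta\lesssim 1$.

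This is exactly where $\|g\|_{C^{1,\gamma}}$, rather than $\|g\|_{C^1}$, enters the lemma. Your stronger conclusion $\|\partial_\alpha I\|_{C^\gamma}\lesssim\|f\|_{C^{1,\gamma}}\|g\|_{C^1}$ should not be expected. The principal term is essentially $(f'(\alpha_1)-f'(\alpha_2))$ times a truncated Hilbert transform of $g'$, and that is not uniformly bounded for merely continuous $g'$.

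A smaller point: in the symmetry route you discarded, $D(f')$ is only $O(|\alpha-\beta|^{\gamma-1})$, not bounded as you claimed.
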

\begin{proof}
    For the rest of the proof, we use the notation $f'$ or $g'$ to denote derivative with respect to the parametrization parameter $\alpha$ or $\beta$. 
    Define $w(\alpha,\beta) := \frac{f(\alpha) - f(\beta)}{e^{i\alpha} - e^{i\beta}}$. Then we observe that for any $\alpha,\beta \in [0,2\pi)$:
    $$
    |w(\alpha, \beta)| \le C\|f\|_{C^1} \le C\eps_0.
    $$
    By choosing $\eps_0$ small, we have that
    \begin{equation}\label{est:nondeg}
        |1+ e^{\frac{i}{2}t} w(\alpha,\beta)| \ge 1 - \sup_{\alpha,\beta}|w(\alpha,\beta)| > \frac12.
    \end{equation}
    This implies $\log|1+ e^{\frac{i}{2}t} w(\alpha,\beta)|$ is uniformly bounded. Moreover by concavity of logarithm, we have the refined bound
    $$
    \log |1+ e^{\frac{i}{2}t} w(\alpha,\beta)| \le |w(\alpha,\beta)| \le C\|f\|_{C^1},
    $$
    from which we can  bound:
    \begin{equation}
        \label{est:logintegralLinfty}
        \|I\|_{L^\infty} \le C\|f\|_{C^1}\|g\|_{C^1}.
    \end{equation}
    Differentiating \eqref{eq:logintegral}, we observe that
    \begin{equation}
        \label{eq:paI}
        I'(\alpha) = \frac{1}{2\pi}\int_0^{2\pi}Re\left(\frac{e^{\frac{i}{2}t} \palpha w(\alpha,\beta)}{1+ e^{\frac{i}{2}t} w(\alpha,\beta)}\right) g'(\beta) d\beta.
    \end{equation}
    We first investigate the boundedness of $\palpha I$: via a computation, it holds that
    \begin{equation}
        \label{paw}
        \palpha w(\alpha,\beta) = \frac{1}{e^{i\alpha} - e^{i\beta}}\left(f'(\alpha) - \frac{f(\alpha) - f(\beta)}{e^{i\alpha} - e^{i\beta}}(ie^{i\alpha})\right).
    \end{equation}
    Denoting $h := \alpha - \beta$, an application of Mean Value Theorem yields:
    \begin{align*}
        f(\alpha) - f(\beta) = f'(\alpha) h + R_1(\alpha,\beta),\quad e^{i\alpha} - e^{i\beta} = ie^{i\alpha}h + R_2(\alpha,\beta),
    \end{align*}
    where the remainders $R_{j}$ verify the bounds
    $$
    |R_1(\alpha,\beta)| \le C\|f\|_{C^{1,\gamma}}|h|^{1+\gamma},\quad |R_2(\alpha,\beta)| \le C |h|^2.
    $$
    With the above bounds, we observe that
    $$
    \frac{f(\alpha) - f(\beta)}{e^{i\alpha} - e^{i\beta}} = \frac{f'(\alpha)}{ie^{i\alpha}} + \frac{ie^{i\alpha}R_1 - f'(\alpha) R_2}{ie^{i\alpha}(ie^{i\alpha} h + R_2)}=: \frac{f'(\alpha)}{ie^{i\alpha}} + R_3(\alpha,\beta), 
    $$
    with $R_3 \in C^\gamma$ such that $|R_3| \le C\|f\|_{C^{1,\gamma}}|h|^\gamma$. Plugging in \eqref{paw},
    \begin{equation}
        \label{paw1}
        \begin{aligned}
            \palpha w &= \frac{1}{e^{i\alpha} - e^{i\beta}}\left(ie^{i\alpha} R_3(\alpha,\beta)\right) = \frac{R_3(\alpha,\beta)}{h} + R_4(\alpha,\beta),
        \end{aligned}
    \end{equation}
    where $|R_4(\alpha,\beta)| \le C\|f\|_{C^{1,\gamma}}|h|^\gamma$. Thus, we conclude that 
    \begin{equation}
        \label{est:paw}
        |\p_\alpha w(\alpha,\beta)| \le C\|f\|_{C^{1,\gamma}}\min(|\alpha - \beta|^{\gamma - 1}, 1).
    \end{equation}
    Now recalling that $|1 + e^{\frac{i}{2}t} w| \ge \frac12$, we then have
    \begin{equation}
        \label{est:logintegralC1}
        \begin{aligned}
        |I'(\alpha)| &\le C\|f\|_{C^{1,\gamma}}\int_0^{2\pi} \min(|\alpha - \beta|^{\gamma - 1}, 1) |g'(\beta)| d\beta \\
        &\le C\|f\|_{C^{1,\gamma}}\|g\|_{C^1}\int_{|\alpha - \beta| \le 1} |\alpha - \beta|^{\gamma - 1} d\beta + C\|f\|_{C^{1,\gamma}}\|g\|_{C^1}\\
        &\le C\|f\|_{C^{1,\gamma}}\|g\|_{C^1}.
        \end{aligned}
    \end{equation}
    for any $\alpha \in [0,2\pi)$.
    Finally, we show $I'(\alpha) \in C^\gamma$. Fixing $\alpha, \alpha' \in [0,2\pi)$ with $\alpha \neq \alpha'$, 
    \begin{equation}
        \label{eq:I'adiff}
        \begin{aligned}
        I'(\alpha) - I'(\alpha') &= \frac{1}{2\pi}\int_0^{2\pi}Re\left(\frac{e^{\frac{i}{2}t} \palpha w(\alpha,\beta)}{1+ e^{\frac{i}{2}t} w(\alpha,\beta)} - \frac{e^{\frac{i}{2}t} \palpha w(\alpha',\beta)}{1+ e^{\frac{i}{2}t} w(\alpha',\beta)}\right) g'(\beta) d\beta\\
        &=\frac{1}{2\pi}\left(\int_{|\alpha - \beta| \le 2\delta} + \int_{|\alpha - \beta| > 2\delta}\right)Re\left(e^{\frac{i}{2}t}(K(\alpha,\beta) - K(\alpha',\beta))\right) g'(\beta) d\beta\\
        &=: I_{n} + I_{f}
        \end{aligned}
    \end{equation}
    where $\delta := |\alpha - \alpha'|$ and
    $$
    K(\alpha,\beta) := \frac{\palpha w(\alpha,\beta)}{1+ e^{\frac{i}{2}t} w(\alpha,\beta)}.
    $$
    We show appropriate $C^\gamma$ bounds for $I_n$ and $I_f$ respectively. We only focus on the regime where $|\alpha - \alpha'|$ and $|\alpha-\beta|$ are sufficiently small, which corresponds to the most singular regime.
    
    To estimate $I_n$, we recall the non-degeneracy bound $|1 + e^{\frac{i}{2}t} w| \ge \frac12$, and estimate that:
    \begin{align*}
        |I_n| &\le C\int_{|\alpha - \beta| \le 2\delta} |\palpha w(\alpha,\beta)||g'(\beta)| d\beta + C\int_{|\alpha - \beta| \le 2\delta} |\palpha w(\alpha',\beta)||g'(\beta)| d\beta.
    \end{align*}
    The first integral above can be bounded in a similar fashion to \eqref{est:logintegralC1} after invoking \eqref{est:paw}:
    \begin{align*}
        \int_{|\alpha - \beta| \le 2\delta} |\palpha w(\alpha,\beta)||g'(\beta)| d\beta &\le C\|f\|_{C^{1,\gamma}}\|g\|_{C^1}\int_{|\alpha - \beta| \le 2\delta} |\alpha - \beta|^{\gamma - 1} d\beta\\
        &\le C(\gamma)\|f\|_{C^{1,\gamma}}\|g\|_{C^1}\delta^\gamma.
    \end{align*}
    To bound the second integral, we note that $|\alpha - \beta| \le 2\delta$ implies that $|\alpha' - \beta| \le 3\delta$. Hence, we have
    \begin{align*}
        \int_{|\alpha - \beta| \le 2\delta} |\palpha w(\alpha',\beta)||g'(\beta)| d\beta &\le \int_{|\alpha' - \beta| \le 3\delta} |\palpha w(\alpha',\beta)||g'(\beta)| d\beta\\
        &\le C(\gamma)\|f\|_{C^{1,\gamma}}\|g\|_{C^1}\delta^\gamma.
    \end{align*}
    We have therefore deduced the following bound:
    \begin{equation}
        \label{est:In}
        |I_n| \le C(\gamma )\|f\|_{C^{1,\gamma}}\|g\|_{C^1}\delta^\gamma.
    \end{equation}
    Now we turn to the estimate for $I_f$. We first observe the following decomposition concerning the difference $K(\alpha,\beta) - K(\alpha',\beta)$:
    \begin{align*}
        K(\alpha,\beta) - K(\alpha',\beta) &= \frac{\palpha w(\alpha,\beta) - \palpha w(\alpha',\beta)}{1+ e^{\frac{i}{2}t} w(\alpha,\beta)} + \palpha w(\alpha',\beta)\left(\frac{1}{1+ e^{\frac{i}{2}t}w(\alpha,\beta)} - \frac{1}{1+ e^{\frac{i}{2}t}w(\alpha',\beta)}\right)\\
        &= \frac{\palpha w(\alpha,\beta) - \palpha w(\alpha',\beta)}{1+ e^{\frac{i}{2}t} w(\alpha,\beta)} + e^{\frac{i}{2}t}\palpha w(\alpha',\beta)\frac{w(\alpha',\beta) - w(\alpha,\beta)}{(1+ e^{\frac{i}{2}t}w(\alpha,\beta))(1+ e^{\frac{i}{2}t}w(\alpha',\beta))}\\
        &=: T_1 + T_2.
    \end{align*}
    Using \eqref{paw}, the first term $T_1$ will be further decomposed as follows:
    \begin{align*}
        T_1 &= \frac{1}{1+ e^{\frac{i}{2}t}w(\alpha,\beta)}\left(\frac{f'(\alpha) - f'(\alpha')}{e^{i\alpha} - e^{i\beta}}\right) + \frac{1}{1+ e^{\frac{i}{2}t}w(\alpha,\beta)}\sum_{k=1}^3 T_{1k},
    \end{align*}
    where
    \begin{align*}
        T_{11} &=\frac{e^{i\alpha'} - e^{i\alpha}}{(e^{i\alpha} - e^{i\beta})(e^{i\alpha'} - e^{i\beta})}\left(f'(\alpha') - w(\alpha',\beta) (ie^{i\alpha'})\right),\\
        T_{12} &= \frac{ie^{i\alpha}(w(\alpha',\beta) - w(\alpha,\beta))}{e^{i\alpha}  -e^{i\beta}},\quad T_{13} = \frac{iw(\alpha',\beta)(e^{i\alpha'} - e^{i\alpha})}{e^{i\alpha}  -e^{i\beta}}
    \end{align*}

    
    We first show that $T_{1k}$, $k = 1,2,3$, and $T_2$ are harmless. To do this we use the following elementary bound: for any $\beta \in [0,2\pi)$,
    \begin{equation}
        \label{est:deltaw}
        |w(\alpha,\beta) - w(\alpha',\beta)| \le C\|f\|_{C^1} \delta |\alpha - \beta|^{\gamma-1}.
    \end{equation}
    This bound follows from the Fundamental Theorem of Calculus and \eqref{est:paw}:
    \begin{align*}
        |w(\alpha,\beta) - w(\alpha',\beta)| &\le \int_{\alpha'}^\alpha |\palpha w(\xi,\beta)| d\xi \le C\|f\|_{C^1}\delta|\alpha - \beta|^{\gamma - 1}.
    \end{align*}
    To bound $T_{11}$, we recall that an almost identical argument to that proving \eqref{est:paw} gives 
    \begin{equation}\label{est:pawaux1}
    |f'(\alpha) - w(\alpha,\beta) (ie^{i\alpha})| \le C\|f\|_{C^{1,\gamma}} |\alpha - \beta|^\gamma.
    \end{equation}
    Also using that $|e^{i\alpha} - e^{i\beta}|\sim |\alpha - \beta|$ when $|\alpha - \beta|$ sufficiently small, and that $|\alpha - \beta| \sim |\alpha' - \beta|$ given $|\alpha - \beta| > 2\delta$, we have
    \begin{equation}
        \label{est:T11}
        |T_{11}| \le C\|f\|_{C^{1}} \delta |\alpha - \beta|^{\gamma-2}.
    \end{equation}
    A similar reasoning also yields the following bounds:
    \begin{equation}
        \label{est:T123}
        |T_{12}| \le C\|f\|_{C^{1}} \delta |\alpha - \beta|^{\gamma-2},\quad |T_{13}| \le C\|f\|_{C^{1}} \delta|\alpha - \beta|^{-1}.
    \end{equation}
    For $T_2$, we use \eqref{est:nondeg}, \eqref{est:paw}, and \eqref{est:deltaw} to conclude that
    \begin{equation}
        \label{est:T2}
        |T_2| \le C\|f\|_{C^{1,\gamma}} \delta |\alpha - \beta|^{2\gamma-2}.
    \end{equation}
    With the above estimates, we write
    \begin{align*}
        I_f &= \frac{1}{2\pi}\int_{|\alpha - \beta| > 2\delta}Re\left(\frac{e^{\frac{i}{2}t}}{1+ e^{\frac{i}{2}t}w(\alpha,\beta)}\left(\frac{f'(\alpha) - f'(\alpha')}{e^{i\alpha} - e^{i\beta}}\right)\right)g'(\beta) d\beta,\\
        &\quad + \int_{|\alpha - \beta| > 2\delta}\bar{K}(\alpha,\alpha',\beta)g'(\beta) d\beta =: J_f^1 + J_f^2,
    \end{align*}
    where from \eqref{est:T11}, \eqref{est:T123}, and \eqref{est:T2} we can estimate $J_f^2$ by:
    \begin{equation}
        \label{est:Jf2}
        \begin{aligned}
            |J_f^2| &\le C\|f\|_{C^{1,\gamma}}\|g\|_{C^1}\int_{|\alpha - \beta| > 2\delta} \left(\delta |\alpha - \beta|^{\gamma-2} + \delta |\alpha - \beta|^{2\gamma-2} + \delta |\alpha - \beta|^{-1}\right) d\beta\\
            &\le C(\gamma)\|f\|_{C^{1,\gamma}}\|g\|_{C^1}(\delta^\gamma + \max\{\delta^{2\gamma},\delta|\log\delta|, \delta\} + \delta |\log \delta|)\\
            &\le C(\gamma)\|f\|_{C^{1,\gamma}}\|g\|_{C^1}\delta^\gamma.
        \end{aligned}
    \end{equation}
    Here, we also used that $\gamma \in (0,1).$ To study the main term $J_f^1$, we rewrite it as
    \begin{align*}
        J_f^1 &= \frac{1}{2\pi}\int_{|\alpha - \beta| > 2\delta} Re\left(\frac{e^{\frac{i}{2}t}}{1+ e^{\frac{i}{2}t}\bar{w}(\alpha)}\left(\frac{f'(\alpha) - f'(\alpha')}{e^{i\alpha} - e^{i\beta}}\right)\right)g'(\beta) d\beta\\
        &\quad + \frac{1}{2\pi}\int_{|\alpha - \beta| > 2\delta} Re\left(e^{\frac{i}{2}t}\left(\frac{1}{1+ e^{\frac{i}{2}t}w(\alpha,\beta)} - \frac{1}{1+ e^{\frac{i}{2}t}\bar{w}(\alpha)}\right)\left(\frac{f'(\alpha) - f'(\alpha')}{e^{i\alpha} - e^{i\beta}}\right)\right)g'(\beta) d\beta\\
        &=: J_f^{1,1} + J_f^{1,2},
    \end{align*}
    where $\bar{w}(\alpha) = \frac{f'(\alpha)}{ie^{i\alpha}}$. The term $J_{f}^{1,2}$ can be bounded using the following estimate:
    \begin{align*}
        \left|\frac{1}{1+ e^{\frac{i}{2}t}w(\alpha,\beta)} - \frac{1}{1+ e^{\frac{i}{2}t}\bar{w}(\alpha)}\right| &= \left|\frac{\bar w(\alpha) - w(\alpha,\beta)}{(1+ e^{\frac{i}{2}t}w(\alpha,\beta))(1+ e^{\frac{i}{2}t}\bar{w}(\alpha))}\right|\\
        &\le C\|f\|_{C^{1,\gamma}} |\alpha - \beta|^\gamma,
    \end{align*}
    where we used the non-degeneracy bound \eqref{est:nondeg} and \eqref{est:pawaux1}. With the above inequality, we have
    \begin{align*}
        |J_f^{1,2}| \le C\|f\|_{C^{1,\gamma}}^2\|g\|_{C^1}\int_{|\alpha - \beta| > 2\delta} \delta^\gamma|\alpha - \beta|^{\gamma - 1} d\beta \le C\eps_0\|f\|_{C^{1,\gamma}}\|g\|_{C^1}\delta^\gamma.
    \end{align*}
    To treat $J_f^{1,1}$, we note the following identity: letting $F(\alpha,\alpha',t) = \frac{e^{\frac{i}{2}t}}{1+ e^{\frac{i}{2}t}\bar{w}(\alpha)}(f'(\alpha) - f'(\alpha'))$,
    \begin{align*}
        Re\left(\frac{e^{\frac{i}{2}t}}{1+ e^{\frac{i}{2}t}\bar{w}(\alpha)}\left(\frac{f'(\alpha) - f'(\alpha')}{e^{i\alpha} - e^{i\beta}}\right)\right) &= \frac12 Re\left(F e^{-i\alpha}\right) + \frac12 Im\left(F e^{-i\alpha}\right)\cot\left(\frac{\alpha - \beta}{2}\right),
    \end{align*}
    where $F$ obeys the bound $|F(\alpha,\alpha',t)| \le C\|f\|_{C^{1,\gamma}} \delta^\gamma$. Writing
    \begin{align*}
        J_{f}^{1,1} &= \frac{1}{4\pi} \int_{|\alpha - \beta| > 2\delta} Re\left(F e^{-i\alpha}\right) g'(\beta) d\beta + \frac{Im\left(F e^{-i\alpha}\right)}{4\pi}\int_{|\alpha - \beta| > 2\delta}\cot\left(\frac{\alpha - \beta}{2}\right)g'(\beta) d\beta,
    \end{align*}
    The first integral can be absolutely bounded by $C\|f\|_{C^{1,\gamma}} \|g\|_{C^1}\delta^\gamma$. Using the oddness of kernel $\cot(z/2)$, the absolute value of the second integral equals to
    \be
    \ba
        \left|\frac{Im\left(F e^{-i\alpha}\right)}{4\pi}\int_{|\alpha - \beta| > 2\delta}\cot\left(\frac{\alpha - \beta}{2}\right)(g'(\beta)-g'(\alpha)) d\beta\right| \\ \le C\|f\|_{C^{1,\gamma}}\|g\|_{C^{1,\gamma}}\delta^\gamma\int_{|\alpha-\beta|>2\delta}|\alpha - \beta|^{\gamma - 1}d\beta\\
        \le C\|f\|_{C^{1,\gamma}}\|g\|_{C^{1,\gamma}}\delta^\gamma,
    \ea
    \ee
    where we used that $|\alpha - \beta|$ is sufficiently small. Hence, we have proved
    \begin{equation}
        \label{est:If}
        |I_f| \le C(\gamma)\|f\|_{C^{1,\gamma}}\|g\|_{C^{1,\gamma}}\delta^\gamma,
    \end{equation}
    which combining with \eqref{est:In} yields $\|I'(\alpha)\|_{C^\gamma} \le C(\gamma)\|f\|_{C^{1,\gamma}}\|g\|_{C^{1,\gamma}}$. This completes the proof.
\end{proof}

\begin{lemma}\label{lem:logintegral2}
    Given $\gamma \in (0,1)$, assume $f, g \in C^{1,\gamma}$. Then there exists $\eps_0 > 0$ sufficiently small that for $\|f\|_{C^{1,\gamma}} < \eps_2$, the function
    \begin{equation}
        \label{eq:logintegral2}
        I(\alpha) := \frac{1}{2\pi}\int_0^{2\pi} \log \left|1 + \frac{2e^{-i\frac\pi2}}{3e^{-i\frac49 t} - e^{-i(\alpha + \beta)}}\frac{f(\alpha) - f(\beta)}{e^{i\alpha} - e^{i\beta}}\right| \pb g(\beta) d\beta
    \end{equation}
    is $C^{1,\gamma}$. Moreover, the following estimate holds:
    \begin{equation}
        \label{est:logintegral2}
        \|I\|_{C^{1,\gamma}} \le C(\gamma)\|f\|_{C^{1,\gamma}}\|g\|_{C^{1,\gamma}}.
    \end{equation}
\end{lemma}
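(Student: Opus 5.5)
The plan is to reduce Lemma \ref{lem:logintegral2} to the argument of Lemma \ref{lem:logintegral}, with the constant factor $e^{\frac{i}{2}t}$ replaced by the smooth, $\alpha,\beta$-dependent multiplier
\[
m(\alpha,\beta) := \frac{2e^{-i\frac\pi2}}{3e^{-i\frac49 t} - e^{-i(\alpha+\beta)}}.
\]
Since $|3e^{-i\frac49 t} - e^{-i(\alpha+\beta)}| \ge 2$, we get $|m|\le 1$ uniformly in $t,\alpha,\beta$. Moreover $m$ is smooth in $(\alpha,\beta)$, with $|\palpha m| + |\palpha^2 m| \le C$ uniformly in $t$. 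With $w(\alpha,\beta) = \frac{f(\alpha)-f(\beta)}{e^{i\alpha}-e^{i\beta}}$ as before, the nondegeneracy bound $|1+mw|\ge 1-C\|f\|_{C^1} > \frac12$ holds once $\eps_2$ is small. The same concavity argument then gives $\|I\|_{L^\infty}\le C\|f\|_{C^1}\|g\|_{C^1}$.

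Differentiating under the integral, I write
\[
I'(\alpha) = \frac{1}{2\pi}\int_0^{2\pi} Re\left(\frac{m\,\palpha w + (\palpha m)\, w}{1+mw}\right) g'(\beta)\,d\beta .
\]
The new term $(\palpha m)w/(1+mw)$ is bounded by $C\|f\|_{C^1}$ and is Lipschitz in $\alpha$ up to the factor $|\palpha w|$. Using \eqref{est:paw} and \eqref{est:deltaw}, its difference between $\alpha$ and $\alpha'$ is controlled by $C\|f\|_{C^{1,\gamma}}(\delta + \delta|\alpha-\beta|^{\gamma-1})$ away from the diagonal, and it is integrable near the diagonal. So this term contributes a $C^\gamma$ piece bounded by $C\|f\|_{C^{1,\gamma}}\|g\|_{C^1}$. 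The main term $K(\alpha,\beta) = m\,\palpha w/(1+mw)$ is handled exactly as in the proof of Lemma \ref{lem:logintegral}. The bound $|I'|$ follows from \eqref{est:paw}. For the $C^\gamma$ seminorm I split into $|\alpha-\beta|\le 2\delta$ and $|\alpha-\beta|>2\delta$. The near part is estimated directly. In the far part I decompose $K(\alpha,\beta)-K(\alpha',\beta)$ into $T_1+T_2$ plus one extra piece, $(m(\alpha,\beta)-m(\alpha',\beta))\palpha w(\alpha',\beta)/(1+mw)$. That extra piece is $O(\delta|\alpha-\beta|^{\gamma-1})$ and integrates to $O(\delta)$.

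The main obstacle is again the principal term, which here takes the form
\[
\int_{|\alpha-\beta|>2\delta} Re\left(\frac{m(\alpha,\beta)}{1+m(\alpha,\beta)\bar w(\alpha)}\,\frac{f'(\alpha)-f'(\alpha')}{e^{i\alpha}-e^{i\beta}}\right) g'(\beta)\,d\beta .
\]
Cancellation is needed here, since only $|f'(\alpha)-f'(\alpha')|\le C\delta^\gamma$ is available against a kernel of order $|\alpha-\beta|^{-1}$. In Lemma \ref{lem:logintegral} the prefactor was independent of $\beta$, so the cotangent identity applied directly. To recover that situation, I will freeze $\beta=\alpha$ in the multiplier: replace $m(\alpha,\beta)$ by $m(\alpha,\alpha)$, and $w$ by $\bar w(\alpha)$ as before. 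The errors are $|m(\alpha,\beta)-m(\alpha,\alpha)|\le C|\alpha-\beta|$ and $|w-\bar w|\le C\|f\|_{C^{1,\gamma}}|\alpha-\beta|^\gamma$. Each error cancels the singularity to an integrable order, giving contributions $C\|f\|_{C^{1,\gamma}}\|g\|_{C^1}\delta^\gamma$. With a prefactor $\tilde F(\alpha,\alpha',t) = \frac{m(\alpha,\alpha)}{1+m(\alpha,\alpha)\bar w(\alpha)}(f'(\alpha)-f'(\alpha'))$ depending only on $\alpha,\alpha',t$, I then write $1/(e^{i\alpha}-e^{i\beta})$ as $\frac12 e^{-i\alpha}(1+ i\cot\frac{\alpha-\beta}{2})$. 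The real part gives a bounded kernel. For the cotangent part, oddness lets me subtract $g'(\alpha)$, and the $C^\gamma$ bound on $g'$ yields $C\|f\|_{C^{1,\gamma}}\|g\|_{C^{1,\gamma}}\delta^\gamma$.

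Collecting the near and far estimates gives $[I']_{C^\gamma}\le C(\gamma)\|f\|_{C^{1,\gamma}}\|g\|_{C^{1,\gamma}}$. Together with the $L^\infty$ and $C^1$ bounds, this proves \eqref{est:logintegral2}.
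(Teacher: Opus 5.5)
Your proposal is correct and follows the paper's route. The paper only observes that the multiplier $m(\alpha,\beta)$ is smooth with $|m|\le 1$ and says the proof of Lemma \ref{lem:logintegral} goes through with minor changes; you have spelled out exactly those changes, namely the extra $(\p_\alpha m)\,w$ term in $I'$, the extra $(m(\alpha,\beta)-m(\alpha',\beta))$ piece in the kernel difference, and freezing $m(\alpha,\beta)$ at $\beta=\alpha$ so the cotangent cancellation applies. One harmless slip is that the identity should read $\frac{1}{e^{i\alpha}-e^{i\beta}} = \frac12 e^{-i\alpha}\bigl(1 - i\cot\frac{\alpha-\beta}{2}\bigr)$, which changes nothing since you only use the oddness of the cotangent kernel.
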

\begin{proof}
    The proof of this Lemma is essentially the same as that of Lemma \ref{lem:logintegral}. We observe that the function
    $$
    \frac{2e^{-i\frac\pi2}}{3e^{-i\frac49 t} - e^{-i(\alpha + \beta)}}
    $$
    is smooth in both $\alpha$ and $\beta$. Moreover, we have the bound
    $$
    \left|\frac{2e^{-i\frac\pi2}}{3e^{-i\frac49 t} - e^{-i(\alpha + \beta)}}\right| \le 1.
    $$
    The two facts above guarantee that the same argument for Lemma \ref{lem:logintegral} goes through with minor changes.
\end{proof}

The last technical lemma concerns an integral operator involving kernel $\tilde G$:
\begin{lemma}\label{lem:tGintegral}
    Given $\gamma \in (0,1)$, assume $f_1, f_2, g_1, g_2\in C^{1,\gamma}$ and $h \in C^1$. Then there exists $\eps_1 > 0$ sufficiently small that for $\|f_j\|_{C^{1,\gamma}}, \|g_j\|_{C^{1,\gamma}} < \eps_1$, $j = 1,2$, the function
    \begin{equation}
        \label{eq:tGintegral}
        \tilde I(\alpha) := \int_0^{2\pi} \Tilde{G}\left(|f_1(\alpha) - f_2(\beta) + g_1(\alpha) - g_2(\beta)|\right)  \pb h(\beta) d\beta
    \end{equation}
    is $C^{1,\gamma}$. Moreover, the following estimate holds:
    \begin{equation}
        \label{est:tGintegral}
        \|\tilde I\|_{C^{1,\gamma}} \le C\eps_1^2 |\log\eps_1| \|h\|_{C^1}.
    \end{equation}
\end{lemma}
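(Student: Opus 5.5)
The plan is to exploit that $\tilde G$ is a regular kernel, so no singular-integral cancellation is needed. Set $D(\alpha,\beta) := f_1(\alpha) - f_2(\beta) + g_1(\alpha) - g_2(\beta)$ and $r(\alpha,\beta) := |D(\alpha,\beta)|$. Since $\|f_j\|_{C^{1,\gamma}}, \|g_j\|_{C^{1,\gamma}} < \eps_1$, we have $r \le 4\eps_1$ uniformly. (In the application the arguments are $Z_\pm$ and $R\ze_\pm$, whose norms are of order $R$, so the smallness is natural.) Choosing $\eps_1 < r_0/4$ puts us in the regime where the expansion \eqref{tildeG} is valid. There we have $|\tilde G(r)| \lesssim r^2|\log r|$, and, after differentiating the series for $K_0$, also $|\tilde G'(r)| \lesssim r|\log r|$. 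The bound on $\tilde I$ itself is then immediate: $\|\tilde I\|_{L^\infty} \le 2\pi \sup|\tilde G(r)|\,\|h'\|_{L^\infty} \lesssim \eps_1^2|\log \eps_1|\,\|h\|_{C^1}$.

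For the derivative, I would differentiate under the integral sign. The key simplification is to avoid the singularity of $\nabla|x|$ at $0$ by writing $\tilde G(|x|) = \Phi(x)$ as a function of $x\in\Rr^2$. Since $\tilde G(r) = O(r^2|\log r|)$ and $\tilde G'(r)=O(r|\log r|)$, the function $\Phi$ is $C^1$ near $0$ with $\nabla\Phi(0)=0$, $|\nabla \Phi(x)| \lesssim |x||\log|x||$. Moreover, $\nabla\Phi(x) = \tilde G'(|x|)\,x/|x|$ is log-Lipschitz near the origin, because $\tilde G'(r)/r$ is itself only logarithmically singular (it behaves like $\log r$ plus smooth terms, coming from the $r^2\log r$ term of $K_0$). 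Then
\[
\tilde I'(\alpha) = \int_0^{2\pi} \nabla\Phi(D(\alpha,\beta))\cdot\big(f_1'(\alpha)+g_1'(\alpha)\big)\,\pb h(\beta)\,d\beta ,
\]
which gives $|\tilde I'| \lesssim \eps_1|\log\eps_1|\cdot \eps_1 \cdot \|h\|_{C^1}$. This is of the claimed size $\eps_1^2|\log\eps_1|$.

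For the $C^\gamma$ seminorm of $\tilde I'$, fix $\alpha,\alpha'$ with $\delta=|\alpha-\alpha'|$ and split the difference into two pieces. The first is $\nabla\Phi(D(\alpha,\beta))\cdot\big[(f_1+g_1)'(\alpha)-(f_1+g_1)'(\alpha')\big]$, which is bounded by $\eps_1|\log\eps_1|\cdot\eps_1\delta^\gamma$. The second is $\big[\nabla\Phi(D(\alpha,\beta))-\nabla\Phi(D(\alpha',\beta))\big]\cdot(f_1+g_1)'(\alpha')$. For this piece I use the log-Lipschitz bound $|\nabla\Phi(x)-\nabla\Phi(y)| \lesssim |x-y|(1+|\log|x-y||)$ together with $|D(\alpha,\beta)-D(\alpha',\beta)| \le 2\eps_1\delta$. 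This gives a bound of order $\eps_1\delta|\log(\eps_1\delta)|\cdot\eps_1$, which is $\lesssim \eps_1^2|\log\eps_1|\,\delta^\gamma$ since $\gamma<1$. Integrating against $|h'|$ then yields \eqref{est:tGintegral}.

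The main obstacle is justifying the regularity of $\Phi$ at the origin, namely that $\nabla\Phi$ is log-Lipschitz with constants producing exactly the factor $\eps_1^2|\log\eps_1|$. I would verify this from the explicit series $K_0(r) = -(\log(r/2)+\gamma_0)I_0(r) + \sum_k \frac{(r/2)^{2k}}{(k!)^2}H_k$. This gives $\tilde G(r) = -\log(r/2)(I_0(r)-1) + (\text{even smooth})$, whose gradient in $x$ is $x\cdot(\text{smooth}) \cdot \log|x| + \text{smooth}$. Such a function is log-Lipschitz, with constant $O(|\log \eps_1|)$ on the ball of radius $4\eps_1$.
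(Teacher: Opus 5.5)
Your proposal is correct and follows the paper's proof essentially step for step: you use the same $L^\infty$ bounds for $\tilde I$ and $\tilde I'$ from $|\tilde G(r)|\lesssim r^2|\log r|$ and $|\tilde G'(r)|\lesssim r|\log r|$, the same two-piece splitting of $\tilde I'(\alpha)-\tilde I'(\alpha')$, and the same key log-Lipschitz bound for $\tilde G'(|z|)\,z/|z|$, applied with $|D(\alpha,\beta)-D(\alpha',\beta)|\le 2\eps_1\delta$. The only variation is that you get this log-Lipschitz bound from the explicit series of $K_0$, whereas the paper deduces it from the log-Lipschitz property of $\tilde G'$ and $\tilde G'(0)=0$ via the case split $|w|\le 2|z-w|$ versus $|w|>2|z-w|$. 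One small correction: the log-Lipschitz constant is universal rather than $O(|\log\eps_1|)$, since $\nabla\Phi$ is not Lipschitz near $0$; the factor $|\log\eps_1|$ enters only when the modulus is evaluated at $\eps_1\delta$, exactly as you do in your main step.
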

\begin{proof}
For convenience, denote
$$
F(\alpha,\beta) := f_1(\alpha) - f_2(\beta) + g_1(\alpha) - g_2(\beta).
$$
Then by assumption, $F(\alpha,\beta) \le 4\eps_1 $ for any $\alpha,\beta \in [0,2\pi)$. Recall that for $\tilde G$ defined in \eqref{tildeG}, we have
$$
|\tilde G(r)| \le C r^2 |\log r|. 
$$
for $r > 0$ sufficiently small. Then by selecting $\eps_1$ sufficiently small, we have
\begin{equation}\label{est:tILinfty}
\|\tilde I\|_{L^\infty} \le C\eps_1^2 |\log\eps_1| \|h\|_{C^1}.
\end{equation}
Differentiating \eqref{eq:tGintegral}, we have
\begin{equation}
    \label{eq:patI}
    \tilde I'(\alpha) = \int_0^{2\pi} Re\left[\tilde G'(|F(\alpha,\beta)|)\frac{F(\alpha,
    \beta)}{|F(\alpha,\beta)|}(\bar{f}_1'(\alpha) + \bar{g}_1'(\alpha))\right]  h'(\beta) d\beta.
\end{equation}
Recall that $\tilde G'(r) \le Cr |\log r|$ when $r> 0$ is sufficiently small. Then it is clear from \eqref{eq:patI} that
\begin{equation}\label{est:tI'Linfty}
\|\tilde I'\|_{L^\infty} \le C\eps_1 |\log \eps_1| \cdot \eps_1 \|h\|_{C^1} \le C\eps_1^2 |\log\eps_1| \|h\|_{C^1}.
\end{equation}
Finally, we fix $\alpha \neq \alpha'$, and $\delta =: |\alpha - \alpha'|$. For notational convenience, let us write
\begin{equation}
    \label{eq:H}
    H(\alpha,\beta) := \tilde{G}'(|F(\alpha,\beta)|) \frac{F(\alpha,\beta)}{|F(\alpha,\beta)|}.
\end{equation}
We further without loss of generality assume that $\delta$ is sufficiently small. Then
\begin{equation}\label{eq:difftI}
\begin{aligned}
    \tilde I'(\alpha) - \tilde I'(\alpha') &= \int_0^{2\pi} Re\left[(H(\alpha,\beta) - H(\alpha',\beta))(\bar{f}_1'(\alpha) + \bar{g}_1'(\alpha))\right]h'(\beta) d\beta\\
    & \quad + \int_0^{2\pi} Re\left[H(\alpha',\beta)(\bar{f}_1'(\alpha) - \bar{f}_1'(\alpha') + \bar{g}_1'(\alpha)-\bar{g}_1'(\alpha'))\right]  h'(\beta) d\beta\\
    &=: J_1 + J_2.
\end{aligned}
\end{equation}
By the assumptions on norms of $f_1, g_1$, and the bound $\tilde G'(r) \le Cr |\log r|$ when $r> 0$ is sufficiently small, it is straightforward to obtain that
\begin{equation}
    \label{est:difftI2}
    |J_2| \le C\eps_1^2 |\log\eps_1| \|h\|_{C^1}\delta^\gamma.
\end{equation}
To estimate $J_1$, we observe that the bound
\begin{equation}
    \label{est:difftI1}
    |J_1| \le C\eps_1^2 |\log\eps_1| \|h\|_{C^1}\delta^\gamma,
\end{equation}
holds as long as the following bound holds:
\begin{equation}
    \label{est:logliptildeG}
    |H(\alpha,\beta) - H(\alpha',\beta)| \le C\eps_1 |\log \eps_1| \delta^\gamma.
\end{equation}
Combining \eqref{est:difftI1} and \eqref{est:difftI2}, we have thus proved
\begin{equation}
    \label{est:tI'holder}
    \|\tilde I'\|_{C^\gamma} \le C\eps_1^2 |\log\eps_1| \|h\|_{C^1}.
\end{equation}
The desired estimate \eqref{est:tGintegral} holds after combining \eqref{est:tILinfty}, \eqref{est:tI'Linfty}, and \eqref{est:tI'holder}.

Hence, we are left to show the estimate \eqref{est:logliptildeG}. We note that we can write $H(\alpha,\beta) = \tilde{H}(F(\alpha,\beta))$, where $\tilde{H}(z) = \tilde{G}'(|z|)\frac{z}{|z|}$. We first show that
\begin{equation}
    \label{est:logliptildeG1}
    |\tilde{H}(z) - \tilde{H}(w)| \le C\omega(|z-w|),
\end{equation}
for all $z, w \in \mathbb{C}$ such that $|z|, |w| < 1$. Here, $\omega$ denotes the Log-Lipschitz modulus of continuity. Without loss of generality we assume that $|w| \le |z|$. Note that
\begin{equation}\label{modulustH}
    \tilde{H}(z) - \tilde{H}(w) = \left(\tilde{G}'(|z|) - \tilde{G}'(|w|)\right)\frac{z}{|z|} + \tilde{G}'(|w|)\left(\frac{z}{|z|}-\frac{w}{|w|}\right).
\end{equation}
Since $\tilde{G}'$ is Log-Lipschitz, it is clear that the first term in \eqref{modulustH} is  bounded by $C\omega(|z-w|)$. To bound the second term in \eqref{modulustH}, we split into two cases. If $|w| \le 2|z-w|$, we have
\begin{align*}
    \left|\tilde{G}'(|w|)\left(\frac{z}{|z|}-\frac{w}{|w|}\right)\right| &\le 2|\tilde{G}'(|w|)| \le 2\omega(|w|) \le 2\omega(2|z-w|) \le C\omega(|z-w|),
\end{align*}
where we used that $\tilde{G}'(0) = 0$. If $|w| > 2|z-w|$, we note that
$$
|z-w|^2 = (|z| - |w|)^2 + |z||w| \left|\frac{z}{|z|}-\frac{w}{|w|}\right|^2 \ge |w|^2\left|\frac{z}{|z|}-\frac{w}{|w|}\right|^2,
$$
where we used $|w| \le |z|$. Using the above bound, we see that
\begin{align*}
    \left|\tilde{G}'(|w|)\left(\frac{z}{|z|}-\frac{w}{|w|}\right)\right| &\le C|w| |\log w| \frac{|z-w|}{|w|} \le C|\log (2|z-w|)| |z-w| \le C\omega(|z-w|),
\end{align*}
which concludes the proof of \eqref{est:logliptildeG1}. Finally to prove \eqref{est:logliptildeG}, we note that
\begin{align*}
    |H(\alpha,\beta) - H(\alpha',\beta)| &\le C\omega(|F(\alpha,\beta) - F(\alpha',\beta)|)\\
    &\le C\omega(|f_1(\alpha) - f_1(\alpha') + g_1(\alpha) - g_1(\alpha')|)\\
    &\le C\omega((\|f_1\|_{C^1} + \|g_1\|_{C^1})\delta)\\
    &\le C\eps_1\delta |\log(\eps_1\delta)| \le C\eps_1|\log \eps_1| \delta^\gamma,
\end{align*}
where we used $\gamma \in (0,1)$ in the final inequality above. 
\end{proof}
With the above three technical lemmas established, we demonstrate that the dynamics described by $Z_\pm$ is indeed dominant in the small-scale regime, namely $R \ll 1$. More precisely, we show the following stability result concerning the perturbation $\ze_\pm$:

\begin{prop}\label{prop:pertest}
    Let $\gamma \in (0,1)$ and $\ze_\pm$ be the unique $C^{1,\gamma}$ solution to \eqref{contourper} equipped with zero initial data. For any $\delta_0 \in (0,1)$ sufficiently small, there exists $R_0(\delta_0) > 0$ such that for any $R \in (0,R_0]$, the time instance
    $$
    \tau(\delta_0) = \sup\{t \ge 0\;|\; \max_\pm \sup_{0\le s \le t}(\|\ze_\pm(\cdot,s)\|_{C^{1,\gamma}}) \le \delta_0\}
    $$
    verifies the bound
    $$
    \tau(\delta_0) > T_0 = \frac{9\pi}{4}.
    $$
\end{prop}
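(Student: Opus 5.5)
The plan is a continuity (bootstrap) argument on $[0,\tau(\delta_0))$ combined with a Grönwall inequality for $N(t):=\max_\pm\sup_{0\le s\le t}\|\ze_\pm(\cdot,s)\|_{C^{1,\gamma}}$. Since $\ze_\pm(\cdot,0)=0$, I would write \eqref{contourper} in integrated form,
\[
\ze_\pm(\alpha,t)=\int_0^t \mathcal{R}_\pm(\alpha,s)\,ds ,
\]
where $\mathcal{R}_\pm$ is the right-hand side. Then $\|\ze_\pm(t)\|_{C^{1,\gamma}}\le\int_0^t\|\mathcal{R}_\pm(s)\|_{C^{1,\gamma}}ds$. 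This avoids having to differentiate a Hölder norm in time, and it does not need any dispersive or skew-adjoint structure from $\pm\frac12\calH$; crude operator bounds suffice because $T_0=\frac{9\pi}{4}$ is a fixed, $R$-independent time. The goal is to show that for $t<\tau(\delta_0)$ (so that $N\le\delta_0$)
\[
\|\mathcal{R}_\pm(t)\|_{C^{1,\gamma}}\le C_1 N(t)+C_2R^2|\log R|,
\]
with $C_1,C_2$ depending only on $\gamma$ and not on $R$ or $\delta_0$.

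The terms are handled one by one as follows.
\begin{enumerate}
\item The Hilbert transform is bounded on $C^{1,\gamma}(\mathbb T)$ with constant $C(\gamma)$. Hence $\|\frac12\calH\ze_\pm\|_{C^{1,\gamma}}\le C\|\ze_\pm\|_{C^{1,\gamma}}$.
\item The kernel $\log|3e^{-i\frac49 t}-e^{-i(\alpha+\beta)}|$ is smooth and uniformly bounded in $(\alpha,\beta,t)$, since $|3e^{-i\frac49t}-e^{-i(\alpha+\beta)}|\ge2$. Integrating by parts in $\beta$ then gives a bound $C\|\ze_+\|_{C^{1,\gamma}}$.
\item The logarithmic nonlinear terms whose density is $\pb\ze_\pm$ are bounded by Lemma \ref{lem:logintegral} and Lemma \ref{lem:logintegral2} with $f=g=\ze_\pm$. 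This gives $C\|\ze\|_{C^{1,\gamma}}^2\le C\delta_0 N$, provided $\delta_0<\min(\eps_0,\eps_2)$.
\item The terms carrying the prefactor $\frac1R$ and density $\pb Z_\pm$ are handled by noting that $Z_\pm/R$ is a fixed smooth curve, bounded in $C^{2}$ uniformly in $t$ and $R$. So Lemma \ref{lem:logintegral} and Lemma \ref{lem:logintegral2} apply with $f=\ze_\pm$ and $g=Z_\pm/R$, and give a bound linear in $N$.
\item The $\tilde G$ terms are the only genuine forcing. Their arguments are $Z_\pm(\alpha)-Z_\pm(\beta)+R(\ze(\alpha)-\ze(\beta))$. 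The four summands $f_1,f_2,g_1,g_2$ are $Z_+$ or $Z_-$ and $R\ze_\pm$. Since $d<R$, each of these has $C^{1,\gamma}$ norm at most $CR$. Lemma \ref{lem:tGintegral} therefore applies with $\eps_1=CR$, provided $R$ is small. With $h=Z_\pm/R$ this gives $CR^2|\log R|$. With $h=\ze_\pm$, multiplied by the prefactor $1$ rather than $\frac1R$, it gives $CR^2|\log R|\,\delta_0$, which is even smaller.
\end{enumerate}

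Assembling these gives $N(t)\le C_1\int_0^tN(s)\,ds+C_2R^2|\log R|\,t$, hence by Grönwall
\[
N(t)\le C_2R^2|\log R|\,t\,e^{C_1t}.
\]
Choose $R_0(\delta_0)$ so that $C_2R_0^2|\log R_0|\,(T_0+1)e^{C_1(T_0+1)}\le\delta_0/2$. Then on $[0,\min(\tau,T_0+1))$ we have $N\le\delta_0/2<\delta_0$. By continuity of $t\mapsto\|\ze_\pm(t)\|_{C^{1,\gamma}}$ (from the local well-posedness), $\tau$ cannot be $\le T_0+1$, so $\tau>T_0$.

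The main obstacle is step (iv) and the uniformity claim behind it. One must check that after factoring out $R$ the background $Z_\pm/R$ has $R$-independent norms, so that the $\frac1R$ prefactor is harmless. One must also check that the quadratic-in-$f$ structure of Lemma \ref{lem:logintegral} really yields a bound linear in $\ze$ when the smooth background plays the role of $g$. A further point is that Lemma \ref{lem:logintegral2} is stated for the $\ze_+$ kernel, whose coefficient is only bounded by $1$. I would check directly that the nondegeneracy condition $|1+\cdots|\ge\frac12$ still holds for $\|\ze_+\|_{C^{1,\gamma}}\le\delta_0$ small. I would also confirm that the $\tilde G$ arguments stay in the range $(0,r_0)$ of the expansion \eqref{tildeG}, which holds since all curves lie in a ball of radius $O(R)$.
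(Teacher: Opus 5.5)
Your proposal is correct and follows essentially the paper's own argument. The paper likewise bounds each term of \eqref{contourper} in $C^{1,\gamma}$ using Lemmas \ref{lem:logintegral}, \ref{lem:logintegral2}, \ref{lem:tGintegral}, the smoothness of the $\log|3e^{-i\frac49 t}-e^{-i(\alpha+\beta)}|$ kernel and the boundedness of $\calH$, which yields a linear Gr\"onwall inequality with forcing $C(1+\delta_0)R^2|\log R|$; it then takes $R$ small so that the exit time from the $\delta_0$-ball exceeds $\frac{9\pi}{4}$. Your integrated (Duhamel-type) formulation and explicit continuity step are simply a more careful rendering of the paper's differential inequality for $\|\ze_\pm\|_{C^{1,\gamma}}$.
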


\begin{proof}
    We prove by an energy argument. To start with, we choose $\delta_0 > 0$ such that $\delta_0 < \eps_0$, where $\eps_0$ is given in Lemma \ref{lem:logintegral}. We first note that $\tau > 0$ due to local well-posedness and $\ze_\pm(\alpha, 0) = 0$. Recall that $Z_\pm$ verifies the following bounds:
    $$
    \|Z_\pm\|_{C^{1,\gamma}} \le C_\pm R,
    $$
    where $C_\pm > 0$ are universal constants. Then we choose $R_0 > 0$ so that $C_\pm R_0 \le \min\{\eps_0, \eps_1, \eps_2\}$, where $\eps_i$, $i = 0,1,2$, are defined in Lemma \ref{lem:logintegral}, \ref{lem:logintegral2}, and \ref{lem:tGintegral}. Hence, invoking Lemma \ref{lem:logintegral}, Lemma \ref{lem:tGintegral}, and that the Hilbert transform $\calH$ is a bounded operator on $C^\gamma$, we obtain the following inequality for any $t \in [0,\tau)$, $R \in (0,R_0]$:
    \begin{equation}
        \label{est:ze-apriori}
        \begin{aligned}
        \frac{d}{dt}\|\ze_-(\cdot,t)\|_{C^{1,\gamma}} &\le C(\gamma)\left(\|\ze_-\|_{C^{1,\gamma}} + \|\ze_-\|_{C^{1,\gamma}}^2 + (1+\delta_0)R^2|\log R|\right)\\
        &\le C(\gamma)(1+\delta_0)(\|\ze_-\|_{C^{1,\gamma}} + R^2|\log R|).
        \end{aligned}
    \end{equation}
    To show an a priori estimate for $\|\ze_+\|_{C^{1,\gamma}}$, we have all the ingredients except for an $C^{1,\gamma}$ estimate for the integral
    $$
    \int_0^{2\pi}\log\left|3e^{-i\frac49 t} - e^{-i(\alpha + \beta)}\right|\pb \ze_+(\beta) d\beta.
    $$
    However, note that $3e^{-i\frac49 t} - e^{-i(\alpha + \beta)}$ is smooth in both $\alpha$ and $\beta$, and that the following lower bound holds:
    $$
    |3e^{-i\frac49 t} - e^{-i(\alpha + \beta)}| \ge |3e^{-i\frac49 t}| - |e^{-i(\alpha + \beta)}| = 2.
    $$
    The above two facts imply that the $C^{1,\gamma}$ norm of the above integral is bounded by $\|\ze_+\|_{C^{1,\gamma}}$. This fact combined with Lemma \ref{lem:logintegral2} and \ref{lem:tGintegral} yields:
    \begin{equation}
        \label{est:ze+apriori}
        \begin{aligned}
        \frac{d}{dt}\|\ze_+(\cdot,t)\|_{C^{1,\gamma}} &\le C(\gamma)\left(\|\ze_+\|_{C^{1,\gamma}} + \|\ze_+\|_{C^{1,\gamma}}^2 + (1+\delta_0)R^2|\log R|\right)\\
        &\le C(\gamma)(1+\delta_0)(\|\ze_+\|_{C^{1,\gamma}} + R^2|\log R|).
        \end{aligned}
    \end{equation}
    Using $\ze_\pm(\alpha,0) = 0$ and applying Gr\"onwall inequality to \eqref{est:ze-apriori}, \eqref{est:ze+apriori}, we obtain
    $$
    \|\ze_\pm(\cdot,t)\|_{C^{1,\gamma}} \le \frac{1}{C(\gamma)(1+\delta_0)}e^{C(\gamma)(1+\delta_0)t} R^2|\log R| < \delta_0
    $$
    for all $t > 0$ such that
    $$
    t < \frac{1}{C(\gamma)(1+\delta_0)}\log\left(C(\gamma) \frac{\delta_0(1+\delta_0)}{R^2|\log R|}\right).
    $$
    Now, we further choose $R$ even smaller depending on $\delta_0$, such that the RHS of the above inequality is strictly larger than $\frac{9\pi}{4}$. We have thus completed the proof upon invoking the definition of $\tau$.
\end{proof}
With Proposition \ref{prop:pertest}, we have the following theorem concerning the merger of patches and the topology change for the canonical momentum $F$ in the screened system.
\begin{thm}
    For any $\gamma \in (0,1)$, there exist $\delta_0 > 0$ sufficiently small and $R_0(\delta_0) > 0$ such 
    that, for any $R \in (0,R_0]$, and $d\in \left(\frac{R}{4}, \frac{3R}{4}\right)$, for the 
    $C^{1,\gamma}$ patch solutions $\sigma_\pm(x,t) = \mathbf{1}_{\Omega_t^\pm}(x)$ with initial data $\tau_\pm(x) = \mathbf{1}_{\Omega_0^\pm}(x)$, where
    $$
    \Omega_0^+ = \{(x,y) \in \R^2\; |\; \frac{(x-d-2R)^2}{R^2} + \frac{y^2}{(2R)^2} < 1\},\quad \Omega_0^- = \{(x,y) \in \R^2\;|\; \frac{x^2 + y^2}{R^2} < 1\}, 
    $$
    there exist time instances $T_1, T_2, T_3, T_4 > 0$ with
$$
0 < T_1 < T_2 < T_3 <  T_4 < \frac{9\pi}{4},
$$
such that $\Omega_t^+ \cap \Omega_t^- \neq \emptyset$ for $t \in (T_2, T_3)$, and $\Omega_t^+ \cap \Omega_t^- = \emptyset$ for $t \in [0,T_1) \cup (T_4,\frac{9\pi}{4}]$. Moreover, the canonical momentum $F$ sees the topology change in the following sense: $\{F(\cdot,t) = 1\} \neq \emptyset$ for $t \in (T_2, T_3)$, and $\{F(\cdot,t) = 1\} = \emptyset$ for $t \in [0,T_1) \cup (T_4,\frac{9\pi}{4}]$.
\end{thm}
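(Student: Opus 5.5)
The plan is to combine the explicit unscreened dynamics of Subsection \ref{subsect:lefthandedunscreen} with the stability result Proposition \ref{prop:pertest}, using a quantitative, scale-invariant description of the overlap of the background patches.

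\textbf{Step 1: rescale the background geometry.} Rescale by $R$, setting $\hat Z_\pm = Z_\pm/R$ and $\hat d = d/R \in (\frac14,\frac34)$. The rescaled background is independent of $R$: an ellipse with semi-axes $1$ and $2$, centred at $2+\hat d$ and rotating with angular velocity $\frac49$, together with a unit disk. As in Subsection \ref{subsect:lefthandedunscreen}, overlap of the backgrounds is governed by one function,
\[
\rho(t) = \min_{\alpha}|\hat Z_+(\alpha,t) - c(t)|,
\]
where $c(t)$ is the disk centre; $\rho$ measures how far the ellipse boundary is from that centre.

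\textbf{Step 2: a uniform margin on the good time windows.} Because the ellipse rotates rigidly about $2+\hat d$, the leftmost point of the ellipse relative to the origin moves from $\hat d+1$ at $t=0$ to $\hat d$ at $t=\frac{9\pi}{8}$. Hence the unscreened first-contact time $T_*$ lies strictly inside $(0,\frac{9\pi}{8})$, and the overlap interval is $(T_*,\frac{9\pi}{4}-T_*)$. Now fix a margin $\eta>0$ and choose $T_1<T_2<T_*<\frac{9\pi}{4}-T_*<T_3<T_4$ so that the following hold uniformly for $\hat d$ in a compact subinterval of $(\frac14,\frac34)$:
\begin{itemize}
\item on $[0,T_1]\cup[T_4,\frac{9\pi}{4}]$ the closed background patches are at distance at least $\eta$;
\item on $[T_2,T_3]$ the backgrounds share a disk of radius $\eta$.
\end{itemize}
These choices are possible by continuity of the distance between two convex bodies moving smoothly, together with the strict monotonicity of that distance near $T_*$ (transversal crossing). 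I would need to check transversality at $T_*$, which holds because $\frac{d}{dt}$ of the gap is of order $\frac49\sin(\cdot)\neq 0$ away from $t=0$ and $t=\frac{9\pi}{8}$. Since the endpoints $\frac14,\frac34$ are open, the uniformity needs either care or a pointwise-in-$d$ choice of $T_i$. I would allow the $T_i$ to depend on $d$, since the statement permits this.

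\textbf{Step 3: transfer to the screened flow.} Choose $\delta_0<\eta/2$ (also small enough for Proposition \ref{prop:pertest}) and obtain $R_0$ from that proposition. Then for $t\le \frac{9\pi}{4}<\tau(\delta_0)$ we have
\[
\sup_\alpha |z_\pm(\alpha,t)/R - \hat Z_\pm(\alpha,t)| \le \|\ze_\pm\|_{C^{1,\gamma}} \le \delta_0 .
\]
So the true boundaries lie in a $\delta_0 R$ tubular neighbourhood of the background boundaries. Moreover, the $C^1$ smallness of $\ze_\pm$ keeps each curve a Jordan curve. The true domain is then the bounded component, and it contains the background domain shrunk by $\delta_0 R$ and is contained in its $\delta_0 R$ enlargement. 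With this, Step 2 gives disjointness on $[0,T_1)\cup(T_4,\frac{9\pi}{4}]$ and a common ball on $(T_2,T_3)$.

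\textbf{Step 4: conclude for $F$.} The statement about $F$ then follows from \eqref{defn:Fpatch}, since $F=\frac12(\mathbf 1_{\Omega^+_t}+\mathbf 1_{\Omega^-_t})$ and $\{F=1\}=\Omega_t^+\cap\Omega_t^-$.

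\textbf{Main obstacle.} I expect the hardest part to be making the geometric margins in Step 2 quantitative and uniform. The Jordan-curve and containment claim in Step 3 is routine by comparison.
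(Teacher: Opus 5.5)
Your route is the paper's route. Its proof is exactly the synthesis of Subsection~\ref{subsect:lefthandedunscreen} with Proposition~\ref{prop:pertest}, plus \eqref{defn:Fpatch}. Your Steps~3--4 (winding-number comparison of the curves, $\delta_0<\eta/2$) are a sound way to make that synthesis rigorous.

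\textbf{The gap.} The geometric input in Step~2 is wrong, and you inherited it from Subsection~\ref{subsect:lefthandedunscreen}, which has the same error.
\begin{itemize}
\item \textbf{The rotation rate is $\frac29$, not $\frac49$.} In \eqref{background2} the phase $e^{-i\frac49 t}$ multiplies only the $e^{i\alpha}$ coefficient. An ellipse $c_1e^{i\alpha}+c_2e^{-i\alpha}$ has its major axis in direction $\frac12(\arg c_1+\arg c_2)$, so the patch turns at rate $\frac29$. This is Kirchhoff's $ab/(a+b)^2$ with $a=1$, $b=2$. Hence $\frac{9\pi}{4}$ is a quarter turn, not a half turn.
\item \textbf{At $t=\frac{9\pi}{8}$ there need be no overlap.} The rescaled ellipse is $2+\hat d+\frac12(3e^{i\alpha}+ie^{-i\alpha})$, tilted by $\frac\pi4$. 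Its leftmost abscissa is $2+\hat d-\frac{\sqrt{10}}{2}$, not $\hat d$. For $\hat d=\frac12$ its distance to the origin is about $1.11$, so the backgrounds are still disjoint and your claim $T_*<\frac{9\pi}{8}$ fails.
\item \textbf{At $t=\frac{9\pi}{4}$ the overlap is maximal.} The ellipse is $(2+\hat d+2\sin\alpha)-i\cos\alpha$. Its vertex $(\hat d,0)$ sits at depth $1-\hat d>\frac14$ inside the unit disk.
\item \textbf{Consequence for the statement.} The distance to the origin decreases monotonically over this quarter turn. So on $[0,\frac{9\pi}{4}]$ the backgrounds are disjoint up to some $T_*\in(0,\frac{9\pi}{4})$ and overlap thereafter; the true overlap window is $(T_*,\frac{9\pi}{2}-T_*)$. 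Your first bullet therefore cannot hold on $[T_4,\frac{9\pi}{4}]$. Combined with Proposition~\ref{prop:pertest} for small $\delta_0$, the true patches do intersect at $t=\frac{9\pi}{4}$. The claimed disjointness on $(T_4,\frac{9\pi}{4}]$ is false, not merely unproved.
\item \textbf{Repair.} Replace $\frac{9\pi}{4}$ by $\frac{9\pi}{2}$ throughout. Then run Proposition~\ref{prop:pertest} up to time $\frac{9\pi}{2}$, which its Gr\"onwall argument allows after shrinking $R_0$.
\end{itemize}

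\textbf{Two smaller points.}
\begin{itemize}
\item \textbf{Ordering of the $T_i$.} Your chain $T_1<T_2<T_*<\frac{9\pi}{4}-T_*<T_3<T_4$ contradicts your own second bullet, since the backgrounds are disjoint just before $T_*$. The overlap times must sit inside the overlap window: in the corrected time scale, $T_1<T_*<T_2<T_3<\frac{9\pi}{2}-T_*<T_4$.
\item \textbf{Uniformity in $d$.} The real requirement is different from the one you name. The margin $\eta$, and hence $\delta_0$ and $R_0$, must be independent of $d$, because the statement fixes them before $d$. Letting only the $T_i$ depend on $d$ would not help if $\eta$ degenerated.
\item \textbf{It does not degenerate.} Since $\hat d\in[\frac14,\frac34]\subset(0,1)$, three facts hold uniformly: the initial gap is $\hat d\ge\frac14$, the maximal overlap depth is $1-\hat d\ge\frac14$, and the background is Lipschitz in time. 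These give $\eta,c>0$ independent of $\hat d$ with $T_1=c$, $T_{2}=\frac{9\pi}{4}-c$, $T_3=\frac{9\pi}{4}+c$, $T_4=\frac{9\pi}{2}-c$. With this choice no transversality at $T_*$ is needed.
\end{itemize}
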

\begin{proof}
    The statement concerning the merger of $\Omega_t^\pm$ and existence of time instances $T_i$, $i = 1,\hdots,4$ follows from a synthesis of the discussion in Subsection \ref{subsect:lefthandedunscreen} and Proposition \ref{prop:pertest}. The statement concerning $F$ is straightforward in view of the identity \eqref{defn:Fpatch}.
\end{proof}

\begin{rem}
    While the above merger is realized for patch solutions, a similar result holds for smooth, compactly supported solutions. We sketch the procedure here: as in the patch case, we first find merger in the unscreened equations \eqref{reducedeq} with solution $\bar\sigma_\pm$. Let $\bar\sigma_+$ be any non-radial, uniformly rotating smooth solution to the 2D Euler equation supported in disk $B_R(d,0)$, $d \in \R$. The existence of such solutions has been shown in \cite{desing1,desing2}. Let us also fix $\max\{\bar \sigma_+\} = 1$. Then let $\bar\sigma_-$ to be a nonnegative, radial function supported in disk $B_R(0)$ with $\max\{\bar\sigma_-\} = 1$. Since $\bar\sigma_+$ is nonradial, by adjusting $d$ accordingly, we obtain  a time-periodic solution of \eqref{reducedeq}, such that $\Int(\supp(\bar\sigma_\pm))$ are disjoint initially, then overlap, and become disjoint again. Then merger for the smooth solutions of the screened equations \eqref{sigmaeq2} is obtained via energy estimates by choosing scale $R$ sufficiently small, in a similar spirit to the above analysis for patches. A first time merger follows also from Theorem \ref{stabmerg} for smooth solutions of the left-handed system. 
\end{rem}

\section{Reconnection for the right-handed equations}\la{recright}
\subsection{The unscreened equations}
We consider here the right-handed equations with the equation of state \eqref{pmright}. In this case, a direct algebraic manipulation gives:
\begin{align*}
    v_+ &= - \U(\sigma_-) + (\B(F) + \U(F)) = - \U(\sigma_-) + 2\nabla^\perp \mathbb K(F),\\
    v_- &=  \U(\sigma_+) - (\B(F) + \U(F)) =  \U(\sigma_+) - 2\nabla^\perp \mathbb K(F)
\end{align*}
The screened right-handed equations are thus equivalently written as
\begin{equation}\label{eq:fullsys}
\begin{cases}
    \pa_t \sigma_+ - \U(\sigma_-) \cdot \nabla \sigma_+ + 2\nabla^\perp \mathbb K(F)\cdot \nabla \sigma_+ = 0,\\
    \pa_t \sigma_- + \U(\sigma_+) \cdot \nabla \sigma_- - 2\nabla^\perp \mathbb K(F)\cdot \nabla \sigma_- = 0,
\end{cases}
\end{equation}
and the unscreened equations are:
\begin{equation}\label{eq:model2}
\begin{cases}
    \pa_t \sigma_+ - \U(\sigma_-) \cdot \nabla \sigma_+ = 0,\\
    \pa_t \sigma_- + \U(\sigma_+) \cdot \nabla \sigma_- = 0.
\end{cases}
\end{equation}
\subsection{A motivating example for the unscreened equations}
We start with a simple example in the case of point vortex dynamics to demonstrate a merger  for the unscreeened equations \eqref{eq:model2}. Consider the data for point vortices given by the following: for $(x_0, y_0)$ with $x_0 < 0$, $y_0 > 0$, let
    $$
    \sigma_+(0,x,y) = \delta(x-x_0, y-y_0) - \delta(x+x_0, y-y_0),
    $$
    $$
    \sigma_-(0,x,y) = \delta(x-x_0, y+y_0) - \delta(x+x_0, y+y_0).
    $$
    After plugging in \eqref{eq:model2}, the solution for $t > 0$ satisfies the following form:
    $$
    \sigma_\pm(t,x,y) = \delta(x-x(t), y\mp y(t)) - \delta(x+x(t), y\mp y(t)),
    $$
    while $(x(t),y(t))$ obeys the following ODE:
    \begin{align}
        \frac{dx}{dt} &= -\frac{1}{2\pi}\frac{-2y}{(2y)^2} - \frac{-1}{2\pi} \frac{-2y}{(2x)^2 + (2y)^2},\\
        \frac{dy}{dt} &= -\frac{-1}{2\pi}\frac{2x}{(2x)^2 + (2y)^2}.
    \end{align}
    Simplifying, we have
    \begin{align}
        \frac{dx}{dt} &= \frac{1}{4\pi}\left(\frac1y - \frac{y}{x^2 + y^2}\right) = \frac{1}{4\pi}\frac{x^2}{y(x^2 + y^2)},\\
        \frac{dy}{dt} &= \frac{1}{4\pi}\frac{x}{x^2 + y^2}.
    \end{align}
    Note that $x/y$ is a constant of motion:
    \begin{align*}
        \frac{d}{dt}\left(\frac{x}{y}\right) &= \frac{x'}{y} - \frac{xy'}{y^2}\\
        &= \frac{1}{4\pi}\left(\frac{x^2}{y^2(x^2 + y^2)} - \frac{x^2}{y^2(x^2 + y^2)}\right)\\
        &= 0,
    \end{align*}
    which implies $x(t) = \frac{x_0}{y_0}y(t)$. Plugging this equality into the equation for $x(t)$, we have
    $$
    \frac{dx}{dt} = \frac{1}{4\pi}\frac{1}{\frac{y_0}{x_0}(1+\frac{y_0^2}{x_0^2})}\frac1x \quad\Rightarrow \quad \frac{1}{2}\frac{d}{dt} (x^2) = \frac{1}{4\pi}\frac{1}{\frac{y_0}{x_0}(1+\frac{y_0^2}{x_0^2})} < 0,
    $$
    where the inequality follows from $x_0 < 0, y_0 > 0$. Denote 
    $$
    C_0 := \frac{1}{2\pi}\frac{1}{\frac{y_0}{x_0}(1+\frac{y_0^2}{x_0^2})},
    $$
    we see that $x(t)$, and hence $y(t)$ converges to $0$ in finite time, with the following explicit time for the merger of point vortices:
    $$
    T_* = -\frac{x_0^2}{C_0}.
    $$

\subsection{Merger for smooth solutions of the unscreened equations}
The point vortex solution that lead to a finite-time merger above obeys the following symmetry:
\begin{equation}\label{oddodd}
\begin{aligned}
    \sigma_\pm (t,x_1,x_2) = -\sigma_{\pm}(t,-x_1,x_2),\\
    \sigma_+(t,x_1,x_2) = \sigma_-(t,x_1,-x_2).
\end{aligned}
\end{equation}
In this section, we demonstrate the merger for a large class of smooth solutions equipped with the above symmetry, inspired by the point vortex configuration discussed in the previous section. We remark that the symmetry property \eqref{oddodd} is reminiscent of the well-known odd-odd symmetry in the context of incompressible Euler equations in two dimensions. The dynamics in our case is completely different. This is because $\sigma_\pm$ does not see the mirror images of themselves across the $x_2$ axis.

As a preliminary, we show that the two symmetries \eqref{oddodd} are conserved by smooth solutions to both the unscreened  \eqref{eq:model2} and the screened system \eqref{eq:fullsys}.

\begin{prop}
    \label{prop:symmetry}
    Let $\sigma_\pm$ be the smooth solution to \eqref{eq:model2} (or \eqref{eq:fullsys}) with initial datum $\tau_\pm$. If $\tau_\pm$ verifies the following properties:
    \begin{align*}
        \tau_\pm (x_1,x_2) = -\tau_{\pm}(-x_1,x_2),\\
    \tau_+(x_1,x_2) = \tau_-(x_1,-x_2),
    \end{align*}
    then \eqref{oddodd} holds for all $t > 0$.
\end{prop}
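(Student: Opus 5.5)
Uniqueness will carry the argument: from the given solution we build a transformed pair, show that it solves the same system with the same initial data, and then invoke the uniqueness part of Theorem \ref{thm:Calphagwp}, which holds for both the unscreened system \eqref{eq:model2} and the screened one \eqref{eq:fullsys}, since their kernels are of the same type. The two symmetries are treated separately. Let $P(x_1,x_2) = (-x_1,x_2)$ and $Q(x_1,x_2)=(x_1,-x_2)$. Both are reflections, with $\det = -1$. For the odd symmetry we set $\tilde\sigma_\pm(t,x) = -\sigma_\pm(t,Px)$. For the exchange symmetry we set $\hat\sigma_+(t,x) = \sigma_-(t,Qx)$ and $\hat\sigma_-(t,x)=\sigma_+(t,Qx)$. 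By hypothesis each transformed pair has initial datum $\tau$, so it suffices to check that each pair solves the system.

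The key computation is how $\na^\perp$ of a radial convolution transforms under a reflection. Let $M$ be a reflection, let $k$ be a radial kernel, and write $\U,\B,\na^\perp\mathbb K$ as $\na^\perp(k*\cdot)$; all these operators are functions of $-\Delta$, so their kernels are radial. Then $(k*(f\circ M)) = (k*f)\circ M$. By the chain rule, $\na^\perp (g\circ M)(x) = J M^T J^{-1}\, (\na^\perp g)(Mx)$, where $J$ is rotation by $\pi/2$. For a reflection in 2D we have $JM^TJ^{-1} = -M$, which one checks directly for $P$ and $Q$. Hence
\[
\na^\perp\big(k*(f\circ M)\big)(x) = -M\,\big(\na^\perp (k*f)\big)(Mx).
\]
On the transport side, $(u\cdot\na(\sigma\circ M))(x) = (Mu)\cdot(\na\sigma)(Mx)$.

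Next we verify the equations. For $\tilde\sigma$: we have $\U(\tilde\sigma_-)(x) = -(-P)\,\U(\sigma_-)(Px) = P\,\U(\sigma_-)(Px)$, and the same relation holds for $\na^\perp\mathbb K(\tilde F)$. Substituting, $\U(\tilde\sigma_-)\cdot\na\tilde\sigma_+ (x)$ becomes $-(P P\,\U(\sigma_-)(Px))\cdot\na\sigma_+(Px) = -(\U(\sigma_-)\cdot\na\sigma_+)(Px)$, since $P^2=I$. The time derivative also picks up the factor $-1$, so the first equation of \eqref{eq:model2}/\eqref{eq:fullsys} holds at $Px$ multiplied by $-1$; the second equation and the $\mathbb K$ term are identical.

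For $\hat\sigma$: now $\U(\hat\sigma_-)(x) = \U(\sigma_+\circ Q)(x) = -Q\,\U(\sigma_+)(Qx)$. Then $-\U(\hat\sigma_-)\cdot\na\hat\sigma_+(x) = (QQ\,\U(\sigma_+)(Qx))\cdot\na\sigma_-(Qx) = (\U(\sigma_+)\cdot\na\sigma_-)(Qx)$. So the first equation for $\hat\sigma$ reduces to the second equation for $\sigma$ evaluated at $Qx$. In the screened case, $\hat F = F\circ Q$, so $2\na^\perp\mathbb K(\hat F)\cdot\na\hat\sigma_+(x) = -(2\na^\perp\mathbb K(F)\cdot\na\sigma_-)(Qx)$. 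This matches the sign of the $\mathbb K$ term in the $\sigma_-$ equation, and the $\hat\sigma_-$ equation is symmetric. This sign bookkeeping, and in particular confirming that the exchanged roles of $\pm\U$ and $\pm\mathbb K$ line up, is the only delicate step. Everything rests on the identity $JM^TJ^{-1}=-M$, and I would check it carefully for both $P$ and $Q$.

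Having shown that $\tilde\sigma$ and $\hat\sigma$ are smooth solutions in the class of Theorem \ref{thm:Calphagwp} with data $\tau$, uniqueness gives $\tilde\sigma=\sigma$ and $\hat\sigma=\sigma$ for all $t>0$, which is \eqref{oddodd}.
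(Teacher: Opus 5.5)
Your proposal is correct and follows essentially the same route as the paper. The paper also defines the reflected, exchanged pair $\eta_\mp(t,x)=\sigma_\pm(t,Rx)$ and uses $JA=-AJ$ (your identity $JM^TJ^{-1}=-M$) to obtain $\U(h\circ R)=-A\,\U(h)\circ R$, and likewise for $\B$. It then checks that the new pair solves the system with the same data and concludes by uniqueness; you additionally write out the odd-in-$x_1$ case, which the paper only says "follows from a very similar argument."
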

\begin{proof}
    We only prove the case for the screened system \eqref{eq:fullsys}, as the corresponding statement concerning \eqref{eq:model2} is similar and follows easily. We first show the preservation of $\sigma_+(t,x_1,x_2) = \sigma_-(t,x_1,-x_2)$. Define the reflection map
    $
    R(x_1,x_2) = (x_1,-x_2),
    $
    and denote 
    $$
    A := \nabla R = \begin{bmatrix}
        1 & 0\\
        0 & -1
    \end{bmatrix}.
    $$
    Also define $\eta_\mp (t,x) = \sigma_\pm(t,Rx)$. We first observe that for any smooth function $h$,
    $$
    (\Delta^{-1} h)(Rx) = \Delta^{-1} (h\circ R)(x),\quad ((I -\Delta)^{-1}h)(Rx) = (I -\Delta)^{-1}(h\circ R)(x).
    $$
    Writing $\nabla^\perp = J\nabla$, where
    $$
    J = \begin{bmatrix}
        0 & -1\\
        1 & 0
    \end{bmatrix},
    $$
    and using that $JA = -AJ$, we have
    \begin{align*}
        \U(h\circ R)(x) = J\nabla(\Delta^{-1} (h\circ R)(x)) = JA(\nabla\Delta^{-1} h(Rx)) = -A\nabla^\perp \Delta^{-1} h(Rx) = -A\U(h)(Rx).
    \end{align*}
    Similarly, we also have
    $$
    \B(h\circ R)(x) = -A\B(h) (Rx).
    $$
    Using the equation for $\sigma_-$ and the above computations, we have
    \begin{align*}
        \pt \eta_+(x) &= -\U(\sigma_+)(Rx) \cdot \nabla \sigma_-(Rx) + \frac12(\B(\sigma_+ + \sigma_-)(Rx) + \U(\sigma_+ + \sigma_-)(Rx)) \cdot \na \sigma_-(Rx)\\
        &= -\left(-A^{-1}\U(\eta_-)(x) \cdot A\na \eta_+(x)\right)\\
        &\quad + \frac12 \left(-A^{-1}\left(\B(\eta_+ + \eta_-)(x) + \U(\eta_+ + \eta_-)(x)\right)\right) \cdot A\na \eta_+(x)\\
        &= \U(\eta_-)(x) \cdot\na \eta_+(x) - \frac12 \left(\B(\eta_+ + \eta_-)(x) + \U(\eta_+ + \eta_-)(x)\right) \cdot \nabla \eta_+(x),
    \end{align*}
    where we used that $A$ is symmetric in the final equality above. A similar derivation using the $\sigma_+$ equation yields:
    $$
    \pt \eta_-(x) = -\U(\eta_+)(x) \cdot\na \eta_-(x) + \frac12 \left(\B(\eta_+ + \eta_-)(x) + \U(\eta_+ + \eta_-)(x)\right) \cdot \nabla \eta_-(x).
    $$
    Finally, observe that by the assumption on initial datum $\tau_\pm$,
    $$
    \eta_\pm (x) = \tau_\mp(Rx) = \tau_\pm (x).
    $$
    By uniqueness, we conclude that for any $t > 0$, $\sigma_\pm (t,x) = \eta_\pm(t,x) = \sigma_\mp(t, Rx) = \sigma_\mp (t, x_1,-x_2)$, which is the desired result. The conservation of odd-in-$x_1$ symmetry for $\sigma_\pm$ follows from a very similar argument. We omit further details.
\end{proof}

Another ingredient in the construction of a merger is the following key computation that concerns a restricted version of the center of mass. For convenience, let us denote $Q := (0,\infty)^2$. The following proposition holds:
\begin{prop}\label{prop:center}
    Let $\sigma_\pm(\cdot,t)$ be a smooth solution to \eqref{eq:model2} on $[0,T]$ for some $T > 0$. In addition, assume that $\sigma_\pm$ has the following properties:
    \begin{enumerate}
        \item The symmetry condition \eqref{oddodd} holds for $\sigma_\pm(x,0)$;
        \item $\{|\sigma_+| > 0\} \subset \R^2_+$ for all $t \in [0,T]$;
        \item $\sigma_+$ is either nonpositive or nonnegative in $Q$. 
    \end{enumerate}
    Define 
    $$
    E_j(t) = \int_Q x_j \sigma_+(x,t) dx,\quad j=1,2.
    $$
    Then the following identities hold:
    \begin{subequations}
    \begin{equation}
        \label{eq:E1'}
        E_1'(t) = \frac2\pi \int_{Q\times Q} \frac{x_1y_1(x_2 + y_2)}{|x-\bar y|^2 |x+y|^2}\sigma_+(y)\sigma_+(x)dydx > 0,
    \end{equation}  
    \begin{equation}
        \label{eq:E2'}
        E_2'(t) = \frac{1}{2\pi}\int_{Q\times Q} \frac{x_1 + y_1}{|x+y|^2}\sigma_+(y)\sigma_+(x)dydx > 0,
    \end{equation}  
    \end{subequations}
    where $\bar{y} = (y_1,-y_2)$.
\end{prop}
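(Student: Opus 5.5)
The plan is to differentiate $E_j$, use the transport equation for $\sigma_+$ from \eqref{eq:model2}, integrate by parts on $Q$, and then rewrite the velocity $\U(\sigma_-)$ through the Biot--Savart law. The symmetries \eqref{oddodd} fold every integral onto $Q\times Q$.

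\textbf{Step 1 (symmetries for all times).} By Proposition \ref{prop:symmetry}, hypothesis (1) gives \eqref{oddodd} for all $t\in[0,T]$. Oddness in $x_1$ forces $\sigma_+(0,x_2,t)=0$. Hypothesis (2) forces $\sigma_+$ to vanish on $\{x_2=0\}$ (and it vanishes at infinity, the data being compactly supported or decaying). Hence $\sigma_+=0$ on $\pa Q$.

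\textbf{Step 2 (time derivative).} From \eqref{eq:model2} we have $\pa_t\sigma_+=\U(\sigma_-)\cdot\na\sigma_+$. Since $\na\cdot \U(\sigma_-)=0$,
\[
E_j'(t)=\int_Q x_j\,\na\cdot\big(\U(\sigma_-)\sigma_+\big)\,dx=-\int_Q \U_j(\sigma_-)(x)\,\sigma_+(x)\,dx .
\]
The boundary term $\int_{\pa Q}x_j\sigma_+\,\U(\sigma_-)\cdot n$ vanishes by Step 1. The important point is that the weight $x_j$ disappears.

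\textbf{Step 3 (folding the Biot--Savart integral).} Write
\[
\U(\sigma_-)(x)=\frac1{2\pi}\int_{\R^2}\frac{(x-y)^\perp}{|x-y|^2}\,\sigma_-(y)\,dy .
\]
Use $\sigma_-(y)=\sigma_+(\bar y)$, substitute $y=\bar z$ with $z$ in the upper half plane, and then use oddness in $z_1$ to fold $z_1<0$ onto $Q$ (the point $y=(-z_1,-z_2)$, giving $x-y=x+z$). This yields
\[
\U(\sigma_-)(x)=\frac1{2\pi}\int_Q\Big[\frac{(x-\bar z)^\perp}{|x-\bar z|^2}-\frac{(x+z)^\perp}{|x+z|^2}\Big]\sigma_+(z)\,dz .
\]
The two components are:
\begin{itemize}
\item second component: $\frac{x_1-z_1}{|x-\bar z|^2}-\frac{x_1+z_1}{|x+z|^2}$;
\item first component: $-\frac{x_2+z_2}{|x-\bar z|^2}+\frac{x_2+z_2}{|x+z|^2}$.
\end{itemize}

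\textbf{Step 4 (symmetrization and sign).}

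For $j=2$: $|x-\bar z|^2=(x_1-z_1)^2+(x_2+z_2)^2$ is symmetric under $x\leftrightarrow z$, while $x_1-z_1$ is antisymmetric. So against $\sigma_+(x)\sigma_+(z)$ on $Q\times Q$ the first term integrates to zero, and only the $-\frac{x_1+z_1}{|x+z|^2}$ term survives. Combined with the minus sign from Step 2, this gives \eqref{eq:E2'}.

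For $j=1$: combine the fractions using $|x+z|^2-|x-\bar z|^2=(x_1+z_1)^2-(x_1-z_1)^2=4x_1z_1$. Together with the minus sign this produces the kernel $\frac{2}{\pi}\frac{x_1z_1(x_2+z_2)}{|x-\bar z|^2|x+z|^2}$, which is \eqref{eq:E1'}.

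Positivity: both kernels are strictly positive on $Q\times Q$, and by hypothesis (3) $\sigma_+(x)\sigma_+(y)\ge0$ there. Strict positivity then holds as long as $\sigma_+\not\equiv0$ in $Q$, which is implicit in the statement.

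\textbf{Main obstacle.} The main obstacle is the bookkeeping of the reflections in Step 3: signs of $\perp$ under $y\mapsto\bar z$ and $z_1\mapsto -z_1$. The second delicate point is justifying that the boundary term in Step 2 vanishes on $\{x_2=0\}$, which is where hypothesis (2) on the support is used.
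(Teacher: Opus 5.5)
Your proposal is correct and follows essentially the same route as the paper: integrate by parts to get $E_j'=-\int_Q \U_j(\sigma_-)\sigma_+\,dx$, fold the Biot--Savart integral onto $Q$ using the two symmetries, and symmetrize in $x\leftrightarrow y$. The differences are cosmetic. You kill the boundary term on the $x_2$-axis using $\sigma_+=0$ there, where the paper uses $\U_1(\sigma_-)=0$. For $E_2'$ you drop the antisymmetric term $\frac{x_1-z_1}{|x-\bar z|^2}$ directly, rather than first combining the two fractions and then symmetrizing.
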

Note that if we additionally choose $\sigma_+$ to be nonpositive in $Q$, we have $E_j(t) < 0$ for all $t$ and they monotonically increase. This mirrors the point vortex scenario considered in the previous subsection. 
\begin{proof}
    Using the first equation of \eqref{eq:model2}, it holds that
    \begin{equation}
        \label{eq:Ej'aux1}
        \begin{aligned}
        E_j'(t) &= \int_Q x_j\U(\sigma_-)\cdot \nabla \sigma_+ dx= -\int_Q \U_j(\sigma_-)\sigma_+ dx + \int_{\p Q} x_j\sigma_+ (\U(\sigma_-)\cdot n)dS,
        \end{aligned}
    \end{equation}
    where $n$ denotes the outward-pointing normal vector along $\p Q$. Observe that this boundary term vanishes: if $j = 1$, only the boundary term along the positive $x_1$ axis survives. But by Property 2, $\sigma_+ \equiv 0$ along $x_1$ axis. If $j = 2$, then only the boundary term along the positive $x_2$ axis survives. But in this case, $\U(\sigma_-) \cdot n = -\U_1(\sigma_-)$ vanishes due to odd-in-$x_1$ symmetry. Hence, we have
    \begin{equation}
        \label{eq:Ej'aux2}
        E_j'(t) = -\int_Q \U_j(\sigma_-)\sigma_+ dx.
    \end{equation}
    We first consider the case $j = 1$. Note that
    \begin{align*}
        \U_1(\sigma_-)(x_1,x_2) &= -\frac{1}{2\pi}\int_{\R^2} \frac{x_2 - y_2}{|x-y|^2} \sigma_-(y) dy_2 dy_1\\
        &= -\frac{1}{2\pi}\int_0^\infty \int_{\R} \left(\frac{x_2 - y_2}{|x-y|^2} - \frac{x_2 - y_2}{|x-\tilde{y}|^2}\right) \sigma_-(y)dy_2 dy_1\\
        &= -\frac2\pi \int_0^\infty \int_{\R} \frac{x_1y_1(x_2 - y_2)}{|x-y|^2|x-\tilde{y}|^2} \sigma_-(y)dy_2 dy_1,
    \end{align*}
    where $\tilde{y} = (-y_1,y_2)$. Here we used the symmetry $\sigma_-(y_1,y_2)= -\sigma_-(-y_1,y_2)$ in the second equality above. Now, using the other symmetry $\sigma_-(y_1, y_2) = \sigma_+(y_1,-y_2)$ and Property 2:
    \begin{align*}
        \U_1(\sigma_-)(x_1,x_2) &= -\frac2\pi \int_0^\infty \int_{\R} \frac{x_1y_1(x_2 - y_2)}{|x-y|^2|x-\tilde{y}|^2} \sigma_+(y_1,-y_2)dy_2 dy_1\\
        &= -\frac2\pi \int_0^\infty \int_{-\infty}^0 \frac{x_1y_1(x_2 - y_2)}{|x-y|^2|x-\tilde{y}|^2} \sigma_+(y_1,-y_2)dy_2 dy_1\\
        &= -\frac2\pi \int_Q \frac{x_1y_1(x_2 + y_2)}{|x-\bar{y}|^2|x+y|^2} \sigma_+(y_1,y_2)dy_2 dy_1
    \end{align*}
    Combining the above identity with \eqref{eq:Ej'aux2} yields \eqref{eq:E1'}.\\
    For the case $j = 2$, we again use symmetry \eqref{oddodd} and Property 2 to obtain:
    \begin{align*}
        \U_2(\sigma_-)(x_1,x_2) &= \frac{1}{2\pi}\int_{\R^2}\frac{x_1-y_1}{|x-y|^2}\sigma_-(y_1,y_2) dy_2 dy_1\\
        &= \frac{1}{2\pi}\int_{\R}\int_{-\infty}^0\frac{x_1-y_1}{|x-y|^2}\sigma_+(y_1,-y_2) dy_2 dy_1\\
        &= \frac{1}{2\pi}\int_{\R}\int^{\infty}_0\frac{x_1-y_1}{|x-\bar y|^2}\sigma_+(y_1,y_2) dy_2 dy_1\\
        &= \frac{1}{2\pi}\bigg[\int_0^\infty\int^{\infty}_0\frac{x_1-y_1}{|x-\bar y|^2}\sigma_+(y_1,y_2) dy_2 dy_1 + \int_{-\infty}^0\int^{\infty}_0\frac{x_1-y_1}{|x-\bar y|^2}\sigma_+(y_1,y_2) dy_2 dy_1\bigg]\\
        &= \frac{1}{2\pi}\int_Q \left(\frac{x_1-y_1}{|x-\bar y|^2} - \frac{x_1+y_1}{|x+ y|^2}\right)\sigma_+(y_1,y_2) dy_2 dy_1\\
        &= \frac{1}{2\pi}\int_Q \frac{2y_1(x_1^2 - y_1^2 -(x_2+y_2)^2)}{|x-\bar y|^2 |x+y|^2}\sigma_+(y_1,y_2) dy_2 dy_1.
    \end{align*}
    Plugging into \eqref{eq:Ej'aux2}, we have
    \begin{equation}\label{E2'aux1}
        E_2'(t) = -\frac{1}{2\pi}\int_{Q\times Q} \frac{2y_1(x_1^2 - y_1^2 -(x_2+y_2)^2)}{|x-\bar y|^2 |x+y|^2}\sigma_+(y)\sigma_+(x) dy dx.
    \end{equation}
    Note that
    \begin{align*}
        \frac{2y_1(x_1^2 - y_1^2 -(x_2+y_2)^2)}{|x-\bar y|^2 |x+y|^2} + \frac{2x_1(y_1^2 - x_1^2 -(x_2+y_2)^2)}{|x-\bar y|^2 |x+y|^2} = -2\frac{x_1 + y_1}{|x+y|^2}.
    \end{align*}
    Then after a symmetrization, from \eqref{E2'aux1} we immediately have
    \begin{align*}
        E_2'(t) = \frac{1}{2\pi}\int_{Q\times Q}\frac{x_1 + y_1}{|x+y|^2}\sigma_+(y)\sigma_+(x) dy dx,
    \end{align*}
    which is \eqref{eq:E2'}.
\end{proof}
With the above Proposition, we show that a family of initially disjoint $\sigma_\pm$ data can merge in finite time:
\begin{thm}\label{thm:overlap}
    There exists smooth, compactly supported solutions $\sigma_\pm$ to \eqref{eq:model2} such that\\ $\supp(\sigma_+(\cdot,0)) \cap \supp(\sigma_-(\cdot,0)) = \emptyset$, and $\Int(\supp(\sigma_+(\cdot,T))) \cap \Int(\supp(\sigma_-(\cdot,T))) \neq \emptyset$ for some $T > 0$.
\end{thm}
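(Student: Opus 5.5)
We will prove Theorem \ref{thm:overlap} by contradiction, using Proposition \ref{prop:symmetry} and Proposition \ref{prop:center}. The idea is to show that the barycenter-type quantity $E_2$ must become positive in bounded time, which is impossible while $\sigma_+$ stays in the upper half plane.

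\emph{Choice of initial data.} Fix a smooth bump $\chi\ge 0$ with $\supp\chi = \overline{B_r(0)}$ and $\{\chi>0\}=B_r(0)$. For a point $(x_0,y_0)$ with $x_0>r$ and $y_0>r$, set
\[
\tau_+(x)= -\chi(x-(x_0,y_0)) + \chi(x-(-x_0,y_0)), \qquad \tau_-(x_1,x_2)=\tau_+(x_1,-x_2).
\]
Then $\tau_\pm$ satisfy the symmetries \eqref{oddodd}, $\tau_+\le 0$ in $Q$, and $\{\tau_+\neq 0\}\cap Q$ is a connected open disk. Since $\supp\tau_+$ lies in $\{x_2>0\}$ and $\supp\tau_-$ in $\{x_2<0\}$, the initial supports are disjoint. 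By Proposition \ref{prop:symmetry} the symmetries \eqref{oddodd} persist for all $t$.

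\emph{Invariance of the quadrant.} The velocity $v_+=-\U(\sigma_-)$ has first component odd-symmetric data behind it, so $v_{+,1}=0$ on the $x_2$-axis. Hence the flow $X_+$ maps $Q$ into $Q$. Consequently, in $Q$:
\begin{itemize}
\item[(a)] $\sigma_+\le 0$ for all $t$;
\item[(b)] $M:=\int_Q\sigma_+\,dx<0$ is conserved;
\item[(c)] $\{\sigma_+(t)\neq 0\}\cap Q = X_+(t)\bigl(\{\tau_+\neq0\}\cap Q\bigr)$ is connected and open.
\end{itemize}
Moreover $\|v_\pm\|_{L^\infty}\le C_0\|\tau\|_{L^1\cap L^\infty}$ by \eqref{est:LinftyVeps}. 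So on $[0,1]$ the supports stay in a ball $B_\rho(0)$, with $\rho = x_0+y_0+r+C_0$ independent of the eventual smallness of $y_0$ (once we restrict to $y_0\le 1$).

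\emph{Contradiction argument.} Suppose the support of $\sigma_+(t)$ stays in the upper half plane for all $t\in[0,1]$. Then property 2 of Proposition \ref{prop:center} holds, and \eqref{eq:E2'} gives
\[
E_2'(t)\ge \frac{1}{8\pi\rho^2}\int_{Q\times Q}(x_1+y_1)\sigma_+(x)\sigma_+(y)\,dx\,dy=\frac{1}{4\pi\rho^2}\,M\,E_1(t).
\]
Here we used $\sigma_+(x)\sigma_+(y)\ge0$ and $|x+y|\le 2\rho$. Both $M$ and $E_1$ are negative, so the right side is positive. We bound $|E_1|$ from below by splitting at $x_1=\varepsilon$:
\[
|E_1(t)|\ge \varepsilon\Bigl(|M|-\|\tau\|_{L^\infty}\,\varepsilon\rho\Bigr).
\]
Choosing $\varepsilon=|M|/(2\|\tau\|_\infty\rho)$ gives $E_2'(t)\ge c_*>0$ on $[0,1]$, where $c_*$ depends only on $M$, $\rho$ and $\|\chi\|_\infty$, and not on $y_0\in(r,1]$.

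On the other hand, $E_2\le 0$ as long as $\sigma_+\le0$ in $Q$ and the support lies in $\{x_2\ge0\}$. Since $|E_2(0)|\le |M|(y_0+r)$, we may take $r$ and $y_0$ small (shrinking $r$ first, then $y_0$, while keeping $|M|$ and $\rho$ controlled by rescaling the height of $\chi$) so that $|E_2(0)|<c_*$. This contradicts $E_2(1)\ge E_2(0)+c_*>0$. \textbf{The main delicate point} is this uniformity of $c_*$ as $y_0,r\to0$; it must be checked carefully, for instance by keeping $r$ and $M$ fixed, taking only $y_0\downarrow r$, and exploiting $E_2(0)\approx M y_0$.

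\emph{Conclusion: interior overlap.} From the contradiction, there is a time $T\le 1$ at which $\{\sigma_+\neq0\}\cap Q$ contains a point with $x_2<0$. It also still contains points with $x_2>0$ for small $t$, and by (c) it is connected at time $T$. Hence it contains a point $p$ on the $x_1$-axis, and therefore an open neighborhood $U$ of $p$ on which $\sigma_+\neq 0$. By the reflection symmetry \eqref{oddodd}, $\sigma_-(T,x)=\sigma_+(T,Rx)\neq0$ for $x\in RU$, and $RU$ is also a neighborhood of $p=Rp$. Thus $U\cap RU\subset \Int\supp\sigma_+(T)\cap\Int\supp\sigma_-(T)$ is nonempty, which proves Theorem \ref{thm:overlap}.
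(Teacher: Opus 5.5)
The step you flag as the ``main delicate point'' is a genuine gap: with the bounds you use, $|E_2(0)|<c_*$ cannot be arranged for any parameters or any time horizon. Your constant is $c_*=|M|^3/(16\pi h\rho^3)$ with $h=\|\chi\|_{L^\infty}$, so it decays like $\rho^{-3}$. But $\rho$ has to absorb the displacement allowed by your velocity bound $C\|\tau\|_{L^1\cap L^\infty}\ge 2C(\|\tau\|_{L^1}\|\tau\|_{L^\infty})^{1/2}$, which is of size $c\,hr$ with $c\gtrsim 1$. (No valid constant can be small: a uniform disk of height $h$ and radius $r$ already induces speed $hr/2$ on its boundary.) On a horizon $[0,T_1]$, set $s=hT_1$ and use $|M|\le \pi h r^2$ and $\rho\ge x_0+y_0+r+c\,hrT_1> r(3+cs)$. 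Then
\[
c_*T_1\le \frac{\pi hT_1 r^4}{16\rho^3}\,|M|\le \frac{\pi s}{16(3+cs)^3}\,r|M|\le \frac{\pi}{972\,c}\,r|M| .
\]
On the other hand, disjointness of the initial supports forces $y_0>r$. For your bump (radial, as $E_2(0)\approx My_0$ presumes) this gives $|E_2(0)|=y_0|M|>r|M|$, so no contradiction arises. Both of your proposed fixes fail for this reason. At fixed $r,M$, letting $y_0\downarrow r$ still leaves $|E_2(0)|\ge r|M|$. Shrinking $r$ at fixed $M$ sends $h\to\infty$, which makes $c_*\propto h^{-1}$ small and inflates $\rho$ through the velocity bound. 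The underlying problem is that a fixed-horizon argument whose coercivity constant is tied to a linearly growing support radius cannot beat the initial distance to the axis.

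The missing idea is a lower bound on $E_2'$ that is uniform in time and never uses the support radius. The paper obtains it from the monotonicity in Proposition \ref{prop:center}: $E_1,E_2$ are negative and increasing, so $|E_j(t)|\le|E_j(0)|$ for all $t<T_0$. The level set $D_t=\{\sigma_+\le-\frac12\}\cap Q$ has conserved measure, so Chebyshev's inequality keeps a fixed portion of $D_t$ inside the box $[0,4|E_1(0)|]\times[0,4|E_2(0)|]$. After removing a thin strip along $x_1=0$, one is left with a set of measure bounded below on which $(x_1+y_1)/|x+y|^2$ is bounded below by a constant depending only on $E_j(0)$. Hence $E_2'\ge c_2>0$ for all $t<T_0$, and $T_0\le |E_2(0)|/c_2$, with no smallness conditions on the data.

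Two smaller issues should also be repaired. First, $Q$ is not invariant under $X_+$; only $\{x_1>0\}$ is, since $v_{+,2}$ need not vanish on the $x_1$-axis. So (c) must be stated for $\{x_1>0\}$; as written, ``a point of $\{\sigma_+\neq0\}\cap Q$ with $x_2<0$'' cannot exist. Second, your final paragraph does not produce points on both sides of the axis at the same time $T$; the paper handles this with Claims 1 and 2 near $T_0$. A simpler route is to take $a$ with $\tau_+(a)\ne0$ whose trajectory reaches $\{x_2\le0\}$. The intermediate value theorem gives $t^*$ with $X_{+,2}(a,t^*)=0$, and at $p=X_+(a,t^*)$ one has $\sigma_-(p,t^*)=\sigma_+(p,t^*)=\tau_+(a)\neq0$.
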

\begin{proof}
    Choose initial data $\tau_\pm(x) = \sigma_\pm(x,0)$ such that the following properties are satisfied:
    \begin{enumerate}
        \item $\tau_\pm \le 0$ in $Q$, $\{\sigma_+ < 0\} \subset Q$ is simply connected, and the symmetry condition \eqref{oddodd} holds;
        \item $\|\tau_+\|_{L^\infty} = 1$, $|\supp(\tau_+)\cap Q| = 2$, $|E_j(0)| > \frac12$ for $j = 1,2$;
        \item Let $D_t = \{x \in \supp(\sigma_+(x,t))\cap Q\;|\; \sigma_+(x,t) \le -\frac12\}$. Choose initial data such that 
            \begin{equation}\la{epschoice}
            |(\supp(\tau_+)\cap Q)\backslash D_0| = \eps > 0,
            \end{equation}
    where $\eps < 1$ is a sufficiently small number to be chosen later;
        \item For all $\alpha \in (-1, 0)$, $\alpha$ is a regular value of $\tau_+|_Q$. Moreover, $\{\tau_+|_Q = -1\}$ contains only one point, which is a nondegenerate critical point.
    \end{enumerate}
    We remark that all properties listed above are only chosen out of convenience. We expect that more flexible initial data would work.
    
    Define 
    $$
    T_0 := \sup\{\tau > 0\;|\; \{|\sigma_+(\cdot,t)| > 0\} \subset \R_+^2 \text{ for all } t\in [0,\tau)\}.
    $$
    Note that $T_0$ is well-defined since, by local well-posedness, there exists $\tau_0 > 0$ such that $\{|\sigma_+(\cdot,t)| > 0\} \subset \R_+^2 \text{ for all } t\in [0,\tau_0)$. Moreover, $|D_t| = |D_0|$, $|(\supp(\sigma_+(x,t))\cap Q)\backslash D_t| = \eps$ for all $t \in [0,T_0)$. By the choice of initial data and Proposition \ref{prop:center}, we note that for $j = 1,2$, $t \in [0,T_0)$:
    \begin{equation}\label{est:monotone}
    |E_j(t)| = -E_j(t) \le -E_j(0) = |E_j(0)|
    \end{equation}
    Due to the monotone quantities \eqref{est:monotone}, we first show that $E$ is confined: by Chebyshev's inequality,
    \begin{equation}\label{est:x1constraint}
    \begin{aligned}
    |D_t \cap \{x_1 \ge 4|E_1(0)|\}| &= |D_t \cap \{x_1(-\sigma_+(x,t)) \ge 4|E_1(0)|\}| \\
    &\le \frac{1}{4|E_1(0)|}\int_{Q} x_1(-\sigma_+(x,t)) dx \le \frac14.
    \end{aligned}
    \end{equation}
    Similarly, we also have $|D_t \cap \{x_2 \ge 4|E_2(0)|\}| \le \frac14$. Define $Q' = [0,4|E_1(0)|] \times [0,4|E_2(0)|] \supset [0,2]^2$. Then using $|D_t| = |D_0| = 2-\eps$,
    $$
    |D_t \cap Q'| \ge 2-\eps - \frac12\ge 1
    $$
    for all $\eps < \frac12$. Moreover, consider the strip $S = [0,\min(\frac{1}{8|E_2(0)| },2|E_1(0)|)] \times [0, 4|E_2(0)|]$ and define $D_t' = (D_t \cap Q') \backslash S$. It is clear that $|D_t'| \ge 1 - \frac12 = \frac12$. Moreover, for all $x, y \in D_t'$, it holds that
    $$
    x_1 + y_1 \in \left[\min(\frac{1}{4|E_2(0)|}, 4|E_1(0)|), 8|E_1(0)|\right],
    $$
    $$
    x_2 + y_2 \in [0,8|E_2(0)|].
    $$
    From which we get
    $$
    \frac{x_1 + y_1}{|x+y|^2} \ge \frac{1}{64}\frac{\min(\frac{1}{4|E_2(0)|}, 4|E_1(0)|)}{|E_1(0)|^2 + |E_2(0)|^2},\quad x,y \in D_t'.
    $$
    Since the kernel $\frac{x_1 + y_1}{|x+y|^2}$ is nonnegative in $Q\times Q$, we conclude from \eqref{eq:E2'} and the definition of $D_t$ that
    \begin{equation}\label{est:E2coercive}
    E_2'(t) \ge \frac{1}{8\pi}\int_{D_t' \times D_t'} \frac{1}{64}\frac{\min(\frac{1}{4|E_2(0)|}, 4|E_1(0)|)}{|E_1(0)|^2 + |E_2(0)|^2} dx \ge C\frac{\min(\frac{1}{4|E_2(0)|}, 4|E_1(0)|)}{|E_1(0)|^2 + |E_2(0)|^2} =: c_2> 0,
    \end{equation}
    But this immediately implies that $T_0 < \infty$: otherwise there exists $T_1 = \frac{|E_2(0)|}{c_2} < \infty$ such that $E_2(T_1) > 0$, which is absurd.

    To prove the main result, we need several topological arguments. By the symmetry assumption \eqref{oddodd}, it suffices to consider the dynamics in the right-half plane. Define open sets
    $$
    \calD_t^\pm := \{x \in \{x_1 > 0\}\;|\; |\sigma_\pm(x,t)| > 0\}.
    $$
    Since $\calD_0^\pm$ is simply connected by construction, so is $\calD_t^\pm$ because these sets are transported by smooth vector fields. We start with showing the following two claims:\\
    
    \noindent\textbf{Claim 1.} There exists $\delta_1 > 0$ such that $\calD_T^+ \cap \R_+^2 \neq \emptyset$ for all $T \in [T_0, T_0 + \delta_1)$;\\
    \noindent\textbf{Claim 2.} For any $\delta > 0$, there exists $T \in (T_0, T_0 + \delta)$ such that $\calD_T^+ \cap \{x_2 \le 0\} \neq \emptyset$.\\
    
    To prove Claim 1, we first make the following observation: for any $t \in [0,T_0)$, there exists $x(t) \in \calD_t^+$ such that $x_2(t) \ge \frac{3}{16|E_1(0)|} > 0$: suppose not, then there exists $t < T_0$ such that $D_t \subset \calD_t^+ \subset \{0<x_2 < \frac{3}{16|E_1(0)|}\}$. Hence
    $$
    |D_t \cap \{x_1 < 4|E_1(0)|\}| < \frac{3}{16|E_1(0)|}\cdot 4|E_1(0)| = \frac34.
    $$
    However this above inequality contradicts \eqref{est:x1constraint}. Then this observation together with a continuity argument immediately implies Claim 1.
    

    To prove Claim 2, we see that if the claim were false, then there exists $\delta > 0$ such that for all $T \in (T_0, T_0 + \delta)$, it holds that $\calD_T^+ \subset \{x_2 > 0\}$. This contradicts the definition of $T_0$ as a supremum.

    Now choose $\delta_1 > 0$ as in Claim 1. The above two claims inform us that there exists $T \in (T_0, T_0 + \delta_1)$ such that $\calD_T^+ \cap \{x_2 > 0\}$ and $\calD_T^+ \cap \{x_2 \le 0\}$ are both nonempty. Since $\calD_T^+$ is open, we further have
    $$
    \calD_T^+ \cap \{x_2 > 0\}\neq \emptyset,\quad \calD_T^+ \cap \{x_2 < 0\} \neq \emptyset.
    $$
    Since $\calD_T^+ \subset \R^2$ is connected and thus path-connected, there exists a continuous path $\gamma:[0,1] \to \calD_T^+$ such that $\gamma(0) \in \{x_2 > 0\}$ and $\gamma(1) \in \{x_2 < 0\}$. By Intermediate Value Theorem, there exists $\alpha \in (0,1)$ such that $\gamma(\alpha) \in \{x_2 = 0\}$. However by the symmetry $\sigma_+ (x_1,x_2) = \sigma_-(x_1,-x_2)$, we must also have $\gamma(\alpha) \in \calD_T^-$. This shows that $\calD_T^+ \cap \calD_T^- \neq \emptyset$ and the Theorem is proved.
\end{proof}

As a corollary of Theorem \ref{thm:overlap}, the support of the canonical momentum variable $F = \frac{\sigma_+ + \sigma_-}{2}$ sees topological change in the following sense:
\begin{cor}\label{cor:topo}
    There exists smooth, compactly supported solutions $\sigma_\pm$ to \eqref{eq:model2} such that $F = \frac{\sigma_+ + \sigma_-}{2}$ has the following properties: there exists $T > 0$ such that
    \begin{enumerate}
        \item $\Int(\supp(F(\cdot,0)))$ has 4 connected components, but $\Int(\supp(F(\cdot,T)))$ has 2 connected components;
        \item the following trichotomy holds:
        \begin{enumerate}
            \item $\{|F(x,0)| > \frac12\} = \emptyset$, but $\{|F(x,T)| > \frac12\} \neq \emptyset$, or;
            \item $Card(\{|F(x,0)| = \frac12\}) = 4$, but $Card(\{|F(x,T)| = \frac12\}) > 4$, or;
            \item There exists $|\alpha| \in (0,\frac12)$ such that $\alpha$ is a regular value of $F(x,0)$, but a critical value of $F(x,T)$.
        \end{enumerate}
    \end{enumerate}
\end{cor}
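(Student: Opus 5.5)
We take the solutions constructed in Theorem \ref{thm:overlap}, with initial data $\tau_\pm$ satisfying properties (1)--(4) there, and use the time $T\in(T_0,T_0+\delta_1)$ produced in that proof. We write $T$ for the time in the statement as well, possibly after shrinking the window.

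\textbf{Connected components.} At $t=0$, $\sigma_+$ is supported in the upper half plane and is odd in $x_1$. Its positivity set $\calD_0^+$ in $\{x_1>0\}$ is simply connected, and it has a mirror image in $\{x_1<0\}$. By the reflection symmetry $\sigma_-(x_1,x_2)=\sigma_+(x_1,-x_2)$, $\sigma_-$ has two more components in the lower half plane. These four sets have disjoint closures, so $F=\frac12(\sigma_++\sigma_-)$ has $\Int(\supp F(\cdot,0))$ equal to the union of four disjoint connected open sets. Here we use that $\Int\supp\tau_+$ in $Q$ is connected, which follows from property (4) because the sublevel sets are nested discs.

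At time $T$, the proof of Theorem \ref{thm:overlap} shows that $\calD_T^+$ and $\calD_T^-$ are connected and meet at a point on the $x_1$-axis. Since this point lies in the open set $\calD_T^+\cap\calD_T^-$, the two sets overlap on an open set $U$. Where exactly one of $\sigma_\pm$ is nonzero, $F\neq0$.

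On $U$ there is a subtlety: $\sigma_+$ and $\sigma_-$ could have opposite signs and cancel. Both are transported, and sign information is preserved along the flow. In $\{x_1>0\}$, $\sigma_+\le0$ on $\calD_t^+$, because $\tau_+\le 0$ there and the two flows map the right half plane to itself by the oddness symmetry. Likewise $\sigma_-\le0$ on $\calD_t^-$. Hence $F<0$ on $\calD_T^+\cup\calD_T^-$, and $\{F\neq0\}\cap\{x_1>0\}=\calD_T^+\cup\calD_T^-$ is connected. It remains to check that the interior of the support is the union of these two sets, with $\calD_T^\pm$ taken open and their boundaries of measure zero. By odd symmetry the left half plane gives the mirror component. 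The two halves stay separated because $\sigma_\pm$ are odd, so they vanish on $\{x_1=0\}$, and the open sets $\calD_t^\pm$ avoid the $x_2$-axis since they are transported within the half plane. This yields exactly two components.

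\textbf{Trichotomy.} At $t=0$ the supports are disjoint, so $|F(\cdot,0)|=\frac12|\sigma_\pm|\le\frac12$. Equality holds exactly at the four points where $|\tau_\pm|=1$, by property (4) and symmetry. Hence $\{|F(\cdot,0)|>\frac12\}=\emptyset$ and $Card\{|F(\cdot,0)|=\frac12\}=4$.

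At time $T$, look at $F$ on the overlap region. Assume (a) and (b) both fail, so $|F(\cdot,T)|\le\frac12$ and $|F|=\frac12$ at exactly four points. On $\{x_1>0\}$ we have $F=\frac12(\sigma_++\sigma_-)$ with both summands $\le0$.

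Consider the function $-F$ restricted to $\overline{\calD_T^+\cup\calD_T^-}\cap\{x_1>0\}$. It is zero on the boundary, where $F=0$ since it is a boundary of the support. Morse-theoretically, the two separate maxima of $-\sigma_+/2$ and $-\sigma_-/2$ have merged into a connected positive region. If $-F$ had only its two maxima (the two points with value $\frac12$) and no other critical points, its superlevel sets near $0$ would be disconnected. But these superlevel sets are connected because the region is connected. So some saddle critical point exists, at some level $\alpha$ with $|\alpha|\in(0,\frac12)$. Concretely, by the mountain pass lemma applied to $-F$ on the connected region between the two maxima, there is a critical point at level $c\in(0,\frac12)$. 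It is strictly positive because the region $\{F<0\}$ is connected, and strictly less than $\frac12$ because there are only four points with $|F|=\frac12$, two of them in this half plane.

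At $t=0$ every level in $(-\frac12,0)$ is regular for $F$, by property (4) applied to $\tau_\pm$ on their disjoint supports. Hence $\alpha=-c$ gives (c).

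\textbf{Main obstacle.} The hardest step is justifying the mountain pass argument and showing that the critical value lies strictly between $0$ and $\frac12$. This needs smoothness of $F$ and compactness of the region. We also have to exclude the degenerate case where the mountain pass level is attained on the boundary, where $F=0$. To handle this we use that points where $\sigma_+$ or $\sigma_-$ is small but nonzero lie inside the open support, and we choose a path between the two maxima inside $\{F<0\}$ along which $\min(-F)>0$. Such a path exists since $\calD_T^+\cup\calD_T^-$ is open, connected, and $F<0$ on it.
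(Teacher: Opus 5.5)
Your proof is correct. The connected-components part follows the paper; you in fact spell out the sign argument, that $\sigma_\pm\le 0$ on $\{x_1>0\}$ because both flows preserve the right half plane, which the paper compresses into ``by \eqref{oddodd}''.

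For the trichotomy you take a genuinely different route. The paper does not argue by contradiction. It uses the reflection symmetry $\sigma_+(x_1,x_2)=\sigma_-(x_1,-x_2)$, which forces $\p_2F=0$ and $\p_1F=\p_1\sigma_+$ on the $x_1$-axis. It then applies Rolle's theorem to $x_1\mapsto\sigma_+((x_1,0),T)$ on a component $(a,b)$ of $\calD_T^+\cap\calD_T^-\cap\{x_2=0\}$. This gives an explicit critical point $x^*$ of $F(\cdot,T)$ on the axis with $F(x^*,T)<0$. The three cases are then read off directly:
\begin{itemize}
\item $F<-\frac12$ somewhere on the overlap gives (a);
\item $F(x^*,T)>-\frac12$ gives (c);
\item $F(x^*,T)=-\frac12$ gives (b), since $x^*$ together with the four transported extrema yields more than four points.
\end{itemize}

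Your mountain-pass argument first needs (a) and (b) to fail, so that $\{F(\cdot,T)=-\frac12\}\cap\{x_1>0\}$ consists of exactly two isolated points. It then extracts a saddle at a level in $(-\frac12,0)$ from the connectedness of $\{F<0\}\cap\{x_1>0\}$ alone. This is more robust: it never uses the reflection symmetry to locate the critical point, so it would apply to any merger of two single-well bumps of the same sign. The cost is heavier, non-constructive machinery, whereas the paper's one-dimensional Rolle argument is elementary and pins the critical point to the symmetry axis.

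Three small points should be made explicit.
\begin{itemize}
\item Your two maxima of $-F$ are the transported points $p_\pm$ where $\sigma_\pm(\cdot,T)=-1$. If (a) fails, they must lie outside $\calD_T^+\cap\calD_T^-$, since otherwise $F(p_\pm,T)<-\frac12$. Hence they are distinct, satisfy $F=-\frac12$, and account for your ``exactly four'' points.
\item The mountain pass lemma is cleanest when applied to $F(\cdot,T)$ on the whole plane, where Palais--Smale holds automatically by compact support. Negativity of the critical level then places the critical point inside $\calD_T^+\cup\calD_T^-$.
\item Connectedness of $\calD_0^+$ is already part of property (1), so you do not need property (4) for it.
\end{itemize}
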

\begin{proof}
Let the initial data $\tau_\pm$ and the time instance $T > 0$ be specified as in Theorem \ref{thm:overlap}. We first remark that one can restrict the attention to the dynamics in the right-half plane $\{x_1 > 0\}$. Recalling the sets $\calD_t^\pm$, we note that
$$
\Int(\supp(\sigma_\pm(\cdot,t))) = \calD_t^\pm \cup \tilde{\calD}_t^\pm,
$$
where we denote $\tilde{A} := \{(-x_1, x_2) \in A\}$ for any set $A$. By choice of initial data, we have $\calD_0^\pm \cap \tilde{\calD}_0^\pm = \emptyset$. Then by odd-in-$x_1$ symmetry recorded in \eqref{oddodd}, we must have $\calD_t^\pm \cap \tilde{\calD}_t^\pm = \emptyset$ for all $t \ge 0$. Therefore, in view of $\Int(\supp(F(\cdot,t))) \subset \Int(\supp(\sigma_+(\cdot,t))) \cup \Int(\supp(\sigma_-(\cdot,t)))$, it suffices to understand $S_t^> := \{x \in \{x_1 > 0\}\;|\;F(x,t) \neq 0\}$.
    \begin{enumerate}
        \item To prove the first statement, we first show that $S_t^> = \calD_t^+ \cup \calD_t^-$. $S_t^> \subset \calD_t^+ \cup \calD_t^-$ is clear by definition of $F$. To show the other direction, observe that for $x_1 > 0$,
        $$
        F(x,t) = 
        \begin{cases}
            \frac{\sigma_+(x,t)}{2},& x\in \calD_t^+ \backslash \calD_t^-,\\
            \frac{\sigma_-(x,t)}{2},& x\in \calD_t^- \backslash \calD_t^+,\\
            \frac{\sigma_+(x,t) + \sigma_-(x,t)}{2},& x\in \calD_t^+ \cap \calD_t^-,\\
            0,& \text{elsewhere}
        \end{cases}
        $$
        It is clear that $F \neq 0$ whenever $x \in \calD_t^+ \Delta \calD_t^-$. By \eqref{oddodd}, we see that $\frac{\sigma_+(x,t) + \sigma_-(x,t)}{2} \neq 0$ for all $x\in \calD_t^+ \cap \calD_t^-$. Hence, we arrive at $\calD_t^+ \cap \calD_t^- \subset S_t^>$.

        By choice of initial data, it is immediate that $S_0^>$ has 2 connected components since $\calD_0^\pm$ are disjoint, connected open sets. By Theorem \ref{thm:overlap}, we have $\calD_T^+ \cap \calD_T^- \neq \emptyset$. This already implies that $S_t^>= \calD_t^+ \cup \calD_t^-$ is a connected open set.
        \item From the construction of initial data, it is straightforward to see that $\{|F(x,0)| > \frac12\} = \emptyset$ and $\alpha$ is a regular value of $F(x,0)$ for any $\alpha \in (-\frac12,\frac12)\backslash\{0\}$. Moreover, the set $\{F(x,0) = \frac12\} = \{|\tau_\pm| = 1\}$ contains exactly 4 distinct points. 
        
        At time instance $T$, we show the following claim:\\
        
        \noindent\textbf{Claim:} There exists $x^* =(x_1^*,0) \in \calD_T^+ \cap \calD_T^-$ such that $F(x^*,T) < 0$ and $\nabla F(x^*,T) = 0$.\\
        
        Before we prove this claim, we explain how one arrives at the trichotomy from this result: if there exists some $x \in \calD_T^+ \cap \calD_T^-$ such that $F(x,T) < -\frac12$, this trivially falls into the first scenario. If $F(x,T) \ge -\frac12$ for all $x \in \calD_T^+ \cap \calD_T^-$, we further split into two cases. Let $x^*$ be specified by the claim. If $F(x^*,T) > -\frac12$, then this falls into the third scenario. Then it suffices to consider $F(x^*,T) = -\frac12$. Since $F(x,T) \ge -\frac12$ and $\sigma_\pm(x,T) < 0$ for all $x \in \calD_T^+ \cap \calD_T^-$, then $\sigma_\pm(x,T) > -1$. Therefore, $\{\sigma_\pm(x,T) = -1\} \subset S_T^>\backslash(\calD_T^+ \cap \calD_T^-)$. Moreover, since diffeomorphisms conserve critical points, $Card(\{|\sigma_\pm(x,T)| = 1\}) = 4$. From the above two facts and the symmetry assumption \eqref{oddodd}, we conclude that $Card(\{|F(x,T)| = \frac12\}) \ge 4$. However, because $F(x^*, T) = -\frac12$ and $x^* \in \calD_T^+ \cap \calD_T^-$, we actually have $Card(\{|F(x,T)| = \frac12\}) > 4$, which falls into the second scenario.

        Now it remains to prove the claim. Consider the set $E = \calD_T^+ \cap \calD_T^- \cap \{x_2 = 0\}$, which is clearly nonempty and open in the induced topology of $\{x_2 = 0\}$. Let us fix a connected component $C$ of $E \subset \{x_2 = 0\}$. Then there exist $0 < a < b < \infty$ such that $C = (a,b) \times \{0\}$. Note that $\sigma_+((a,0),T) = \sigma_+((b,0),T) = 0$ and $\sigma_+((x_1,0),T) < 0$ for any $x_1 \in (a,b)$. By Mean Value Theorem, there exists $x_1^* \in (a,b)$ such that $\p_1 \sigma_+((x_1^*,0),T) = 0.$ On the other hand, we note that for any $(z,0) \in \R^2$, $t \ge 0$,
        $$
        \p_1 F(z,0) = \frac12(\p_1\sigma_+(z,0) + \p_1\sigma_-(z,0)) = \frac12(\p_1\sigma_+(z,0) + \p_1\sigma_+(z,0)) = \p_1\sigma_+(z,0),
        $$
        $$
        \p_2 F(z,0) = \frac12(\p_2\sigma_+(z,0) + \p_2\sigma_-(z,0)) = \frac12(\p_2\sigma_+(z,0) - \p_2\sigma_+(z,0)) = 0,
        $$
        where we used the symmetry property \eqref{oddodd} above. Therefore, we conclude that there exists $x_1^* \in (a,b)$ such that $\nabla F((x_1^*,0),T) = 0$. Moreover as $\sigma_+((x_1^*,0), T) < 0$, we also have $\sigma_-((x_1^*,0), T) =\sigma_+((x_1^*,0), T) < 0$ by symmetry, and hence $F((x_1^*,0), T) < 0$. This concludes the proof of the claim.
    \end{enumerate}
\end{proof}
\subsection{Merger for smooth solutions of the screened equations}
In view of the  above analysis for the unscreened equations \eqref{eq:model2}, we prove an analogous result to Theorem \ref{thm:overlap} and Corollary \ref{cor:topo} for the screened  system \eqref{eq:fullsys} when the dynamics are at sufficiently small scales. Toward this end, given a scale $R > 0$, we introduce the following change of variables:
\begin{equation}
    \label{defn:rescale}
    \sigma_\pm (x,t) = \tilde\sigma_\pm (z,t),\quad z = \frac{x}{R}.
\end{equation}
From \eqref{eq:fullsys}, we obtain the following equivalent system satisfied by profiles $\tilde\sigma_\pm$ in $(z,t)$ coordinates:
\begin{equation}
    \label{eq:tildefullsys}
    \begin{cases}
    \pa_t \tsig_+ - \U(\tsig_-) \cdot \nabla \tsig_+ + \frac{R}{2}\left[\int_{\R^2}\calK(R(z-w))(\tsig_+(w) + \tsig_-(w))dw\right]\cdot \nabla \tsig_+ = 0,\\
    \pa_t \tsig_- + \U(\tsig_+) \cdot \nabla \tsig_- - \frac{R}{2}\left[\int_{\R^2}\calK(R(z-w))(\tsig_+(w) + \tsig_-(w))dw\right]\cdot \nabla \tsig_- = 0,
\end{cases}
\end{equation}
where $\calK$ is the kernel corresponding to operator $2\nabla^\perp \mathbb K$.

As a preliminary, we remark on the symmetry property concerning $\tsig_\pm$ and several a priori bounds. First, we recall from Proposition \ref{prop:symmetry} that the solution $\sigma_\pm$ to the full system \eqref{eq:fullsys} conserves the symmetry \eqref{oddodd}. Then so does $\tsig_\pm$. Moreover, the following a priori bounds hold: for any $t \ge 0$
\begin{equation}
    \label{est:tildeapriori}
    \begin{aligned}
        \|\tsig_\pm(\cdot,t)\|_{L^\infty} = \|\tsig_\pm(\cdot,t)\|_{L^\infty} = \|\tau_\pm\|_{L^\infty},\\
        \|\tsig_\pm(\cdot,t)\|_{L^1} = R^{-2}\|\tsig_\pm(\cdot,t)\|_{L^1} = R^{-2}\|\tau_\pm\|_{L^1}.
    \end{aligned}
\end{equation}

We now set the initial data $\tau_\pm$ to the original system \eqref{eq:fullsys} by
\begin{equation}
    \label{eq:init}
    \tau_\pm(x) = \tilde\tau_\pm(z), 
\end{equation}
where $\tilde\tau_\pm$ satisfies Properties 1--4 listed in the proof of Theorem \ref{thm:overlap}. Let $\bar\sigma_\pm(z,t)$ be the solution to the leading order problem \eqref{eq:model2} initiated by data $\tilde\tau_\pm(z)$. Let us also denote
$$
\bar E_j (t) = \int_Q z_j \bar\sigma_+(z)dz.
$$

By Theorem \ref{thm:overlap}, there exists $\bar T:= \frac{|\bar E_2(0)|}{c_2} < \infty$ such that there is $T \in (0,\bar T)$ such that the support interior of $\bar\sigma_\pm$ intersects at time $T$. Here, we recall that $c_2$ defined in \eqref{est:E2coercive} only depends on $\bar E_j(0)$, $j = 1,2$. In particular, $c_2$ is independent of the scale $R$. 

With above important notations introduced, we state and prove the following main theorem concerning the merger for the original screened equations \eqref{eq:fullsys}:
\begin{thm}\label{thm:overlapfull}
    Let $\sigma_\pm$ be the solution to \eqref{eq:fullsys} initiated by data given in \eqref{eq:init}. Then $\supp(\sigma_+(\cdot,0)) \cap \supp(\sigma_-(\cdot,0)) = \emptyset$, and $\Int(\supp(\sigma_+(\cdot,T))) \cap \Int(\supp(\sigma_-(\cdot,T))) \neq \emptyset$ for some $T > 0$.
\end{thm}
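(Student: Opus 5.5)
The plan is to rerun the barycenter argument from the proof of Theorem \ref{thm:overlap} directly on the rescaled screened system \eqref{eq:tildefullsys}, treating the $\calK$ terms as an error that vanishes as $R\to0$. Initial disjointness of supports is immediate from Properties 1--4 of $\tilde\tau_\pm$ and the scaling \eqref{eq:init}. By Proposition \ref{prop:symmetry}, $\tsig_\pm$ keep the symmetry \eqref{oddodd}. Since $\tsig_+$ is transported by a smooth incompressible field, $|D_t|$ and $|(\supp\tsig_+\cap Q)\setminus D_t|=\eps$ are preserved, and so is the sign of $\tsig_+$ in $Q$, as long as the support stays in $\R^2_+$. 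Define $T_0$ as in the proof of Theorem \ref{thm:overlap}, now for $\tsig_+$, and set $E_j(t)=\int_Q z_j\tsig_+(z,t)\,dz$.

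\textbf{Step 1.} For $t<T_0$, compute $E_j'$ as in Proposition \ref{prop:center}. The boundary terms still vanish. On the positive $z_1$-axis, $\tsig_+=0$. On the positive $z_2$-axis, the normal component of the extra field $\frac R2\calK(R\cdot)*(\tsig_++\tsig_-)$ must vanish; it does, because $\tsig_++\tsig_-$ is odd in $z_1$, $\mathbb K$ commutes with reflections, and $\nabla^\perp$ of an odd function has first component odd in $z_1$. This gives
\[
E_j'(t)=\mathcal{M}_j(t)+\mathcal{R}_j(t),\qquad \mathcal{R}_j(t)=\mp\frac R2\int_Q\Big[\int\calK(R(z-w))(\tsig_++\tsig_-)(w)\,dw\Big]_j\tsig_+(z)\,dz,
\]
where $\mathcal M_j$ is exactly the expression \eqref{eq:E1'} or \eqref{eq:E2'}, with the same signs.

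\textbf{Step 2.} Bound the error $\mathcal{R}_j$. The kernel $\calK=2\nabla^\perp$ of the kernel of $\mathbb K$ is the gradient of $\frac{1}{2\pi}(\log r+K_0(r))$ up to constants. By \eqref{tildeG} it satisfies $|\calK(x)|\lesssim |x|\,|\log|x||$ near the origin and grows at most like $|x|^{-1}$ (up to a bounded part) at infinity. It is therefore bounded, with $|\calK(x)|\le C$ globally. The trouble is that $\|\tsig\|_{L^1}$ in the $z$ variable is $O(1)$ by \eqref{est:tildeapriori}, read correctly as the $L^1$ norm of $\tilde\tau$, while the support may spread over time. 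For $t<T_0$ I will use the confinement from Step 3, which holds for most of the mass. Since $|\calK|\le C$, a crude bound already suffices: $|\mathcal R_j|\le CR\,\|\tilde\tau\|_{L^1}^2$, uniformly in $t$. For $\mathcal R_2$ there is an additional factor $z_j$ only if we do not integrate by parts; here we did, so no moment appears. Choosing $R$ small then makes $|\mathcal R_j|\le c_2/2$.

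\textbf{Step 3.} Now repeat the argument of Theorem \ref{thm:overlap}. Since $\mathcal M_j\ge0$ and $|\mathcal R_j|\le CR$, we get $|E_j(t)|\le|E_j(0)|+CRt$. On the time window $[0,2\bar T]$ with $R$ small, this gives $|E_j(t)|\le 2|E_j(0)|$. Chebyshev then confines most of $D_t$ to the rectangle $Q'$, with constants doubled. The coercivity bound \eqref{est:E2coercive} gives $\mathcal M_2\ge c_2'$, where $c_2'$ depends only on $E_j(0)$ and not on $R$. Hence $E_2'\ge c_2'/2$, which forces $T_0\le 2|E_2(0)|/c_2'<2\bar T$, so the window is consistent. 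After that, Claims 1 and 2 and the connectedness and intermediate value argument carry over verbatim, since they use only the symmetry, the smoothness of the flow, and the lower bound on $z_2$ coming from the confinement. Undoing the scaling yields the overlap for $\sigma_\pm$.

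The main obstacle I expect is Step 2 combined with Step 3: the error must be uniformly small on a time interval whose length is independent of $R$, which the bound $|\calK|\le C$ provides. The nonvanishing of the boundary term on the $z_2$-axis under the screened field is the other point that needs care.
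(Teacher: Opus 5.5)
Your proposal is correct. It follows the paper's overall strategy: rerun the restricted-barycenter argument of Theorem \ref{thm:overlap} on the rescaled system \eqref{eq:tildefullsys}, treat the $\calK$-terms as an $O(R)$ error on an $R$-independent time window, and finish with the same topological argument. Where you differ is in how the error is controlled, and your version is more economical.

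\textbf{Bounding the error term.} The paper uses only the local bound $|\calK(r)|\le 1$ for $r\le R_0$. It therefore needs its Step 1, which confines the supports of $\tsig_\pm$ to a ball of $R$-independent radius on $[0,2\bar T]$, before it can bound $J_i$. You use instead that $\calK$ is globally bounded. This is true: $\calK$ is the kernel of $\mathbb S=2\nabla^\perp\mathbb K$, whose symbol is integrable, as the paper itself notes in Section \ref{mergstabi}. It gives $|\mathcal R_j|\le CR\|\tilde\tau\|_{L^1}^2$ for all times, with no support control at all.

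\textbf{Controlling the moments.} The paper bootstraps the sign $E_j'>0$ to keep $|E_j(t)|\le |E_j(0)|$ exactly. That requires coercivity of both $I_1$ and $I_2$. You use only $\mathcal M_j\ge 0$, which holds because the kernels are nonnegative on $Q\times Q$ and $\tsig_+\le 0$ on $Q$. This gives the drift bound $|E_j(t)|\le |E_j(0)|+CRt$, so you need neither the bootstrap nor a lower bound on $\mathcal M_1$. The price is an enlarged confinement box, hence a coercivity constant $c_2'$ generally smaller than $c_2$, hence a longer time window than the paper's $\frac{11}{6}\bar T$.

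\textbf{Order of choices.} Because $c_2'<c_2$ typically, your claim $2|E_2(0)|/c_2'<2\bar T$ is not justified, and the window must be chosen differently. Note that $c_2'$ depends only on $E_j(0)$ and the fixed factor $2$. So first set $T_w=3|E_2(0)|/c_2'$. Then choose $R$ with $CRT_w\le\min_j|E_j(0)|$ and $CR\|\tilde\tau\|_{L^1}^2\le c_2'/2$. If $T_0\ge T_w$, then $E_2$ becomes strictly positive before $T_w$, which is a contradiction. Hence $T_0<T_w$, and Claims 1 and 2 apply as before.

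\textbf{Boundary term.} On the $z_2$-axis it vanishes simply because $\tsig_+$ is odd in $z_1$ and hence zero there. Your argument about the normal component of the screened field is correct but not needed.
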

\begin{proof}
    The proof is arranged into various steps:
    
    \noindent\textbf{Step 1. Control of the supports for $\sigma_\pm$.} In this step, we establish an estimate for the size of the supports for $\sigma_\pm$ (and therefore $\tsig_\pm$) on a sufficiently long time interval, namely on $[0, 2\bar T]$. Let $d_0 >0$ be such that $\supp(\tilde\tau_\pm) \subset B_{d_0}(0)$ i.e. $\supp(\tau_\pm) \subset B_{d_0R}(0)$.
    Invoking a standard integral operator estimate, there exists a universal constant $C_0 > 0$ such that
    $$
    \|\mathbb{U}(\sigma_\pm)\|_{L^\infty} + \|(\mathbb{B} + \mathbb{U})(F)\|_{L^\infty} \le C_0\|\sigma_\pm\|_{L^1}^\frac12 \|\sigma_\pm\|_{L^\infty}^\frac12 \le C_0 R\|\tilde\tau_\pm\|_{L^1}^\frac12.
    $$
    Here, we used \eqref{est:tildeapriori} and $\|\tilde\tau_\pm\|_{L^\infty} = 1$. The above estimate together with \eqref{defn:rescale} immediately gives
    \begin{equation}
        \label{est:suppbd}
        \supp(\tsig_\pm(\cdot,t)) \subset B_{d_{\bar T}}(0),\quad t \in [0,2\bar T],
    \end{equation}
    where $d_{\bar T} = d_0 + 2C_0 \bar T \|\tilde\tau_\pm\|_{L^1}^\frac12$. In particular, $d_{\bar T}$ is independent of $R$.\\

    \noindent\textbf{Step 2. Moment analysis.} In this step, similar to the study of the leading order model \eqref{eq:model2}, we analyze the restricted moments corresponding to the profiles $\tsig_+$, namely:
    \begin{equation}
        \label{defn:tildemoment}
        E_j(t) = \int_Q z_j \tilde\sigma_+(z) dz.
    \end{equation}
    Note that we have $E_j(0) = \bar E_j(0)$, and $E_j(t) \le 0$ for all $t \ge 0$ due to our choice of initial data.
    
    The main ingredient is the following lemma, analogous to Proposition \ref{prop:center}:
    \begin{lemma}
        Let $\sigma_\pm(\cdot,t)$ be a smooth solution to \eqref{eq:fullsys} on $[0,T]$, $T > 0$. In addition, assume that $\sigma_\pm$ has the following properties:
    \begin{enumerate}
        \item The symmetry condition \eqref{oddodd} holds for $\sigma_\pm(\cdot,0)$;
        \item $\{|\sigma_+| > 0\} \subset \R^2_+$ for all $t \in [0,T]$;
        \item $\sigma_+$ is either nonpositive or nonnegative in $Q$. 
    \end{enumerate}
    Then the following identities hold:
    \begin{subequations}
    \begin{equation}
        \label{eq:tE1'}
        E_1'(t) = I_1[\tsig_+] + RJ_1[\tilde\sigma]  ,
    \end{equation}  
    \begin{equation}
        \label{eq:tE2'}
        E_2'(t) = I_2[\tsig_+] - RJ_2[\tilde\sigma],
    \end{equation}  
    \end{subequations}
    where
    \begin{align*}
        I_1[\tsig_+] &:= \frac2\pi \int_{Q\times Q} \frac{z_1w_1(z_2 + w_2)}{|z-\bar w|^2 |z+w|^2}\tsig_+(w)\tsig_+(z)dwdz,\\
        I_2[\tsig_+] &:= \frac{1}{2\pi}\int_{Q\times Q} \frac{z_1 + w_1}{|z+w|^2}\tsig_+(w)\tsig_+(z)dwdz,\\
        J_i[\tilde\sigma] &:= \frac{1}{2}\int_Q\int_{\R^2} \calK_i(R(z-w))\left(\tsig_+(w) + \tsig_-(w)\right)\tsig_+(z)dwdz,\quad i = 1,2.
    \end{align*}
    \end{lemma}
    The proof for this lemma is almost identical to that of Proposition \ref{prop:center}, where one uses \eqref{eq:tildefullsys} instead of \eqref{eq:model2}. We therefore omit the tedious computations here.

    To proceed, we define 
    $$
    \overline{T}_0 := \sup\{\tau > 0\;|\; \{|\tilde\sigma_+(\cdot,t)| > 0\} \subset \R_+^2 \text{ for all } t\in [0,\tau)\}.
    $$
    For the sake of contradiction, let us assume that $\overline{T}_0 \ge \frac{11}{6}\bar{T}$. To start with, we first show the following weaker bound
    \begin{equation}
        \label{est:weak}
        E_i'(t) > 0,\quad t \in [0,\frac{11}{6}\bar{T}),\; i = 1,2,
    \end{equation}
    given the scale $R$ chosen suitably small. To prove \eqref{est:weak}, we let $\tau \ge 0$ be the first time instance such that either $E_1' = 0$ or $E_2' = 0$. We may further assume that $\tau \le \frac{11}{6}\bar T$, as otherwise we are done. For all $t \in [0,\tau)$, we have the moment bounds:
    \begin{equation}\label{apriorimoment}
    |E_j(t)| = -E_j(t) \le -E_j(0) = |E_j(0)| = |\bar E_j(0)|.
    \end{equation}
    With the above estimate and following a similar argument to that in Theorem \ref{thm:overlap}, one obtains:
    \begin{equation}\label{est:I2coer}
    I_2[\tsig_+] \ge c_2,\quad t \in [0,\tau).
    \end{equation}
    By a very similar argument to that deriving \eqref{est:E2coercive}, one also obtains the following coercive bound for $E_1'$:
    \begin{equation}\label{est:I1coer}
    I_1[\tsig_+] \ge c_1, \quad t \in [0,\tau),
    \end{equation}
    where $c_1 > 0$ is a constant only depending on $\bar E_1(0)$ and $\bar E_2(0)$. To estimate $J_i[\tsig]$, we use the following bound for the kernel $\calK$: there exists a universal constant $R_0 > 0$ such that
    \begin{equation}\label{est:calK}
    |\calK(r)| \le 1,\quad r \in [0,R_0].
    \end{equation}
    In fact, $\calK$ has the asymptotics $|\calK(r)| \sim r |\log r| + O(r)$ near $r = 0$ (see e.g. \cite{bessel1, bessel}), but the crude bound \eqref{est:calK} suffices. For $t \in [0,\tau) \subset [0, 2\bar T]$, $z, w \in \supp(\tsig_\pm(\cdot,t))$, from \eqref{est:suppbd} we have $|z - w| \le 2d_{\bar T}$. Choosing $R > 0$ sufficiently small that $R|z-w| \le 2d_{\bar T} R < R_0$, we have $|\calK(R(z-w))| \le 1$. Hence for $i = 1,2$, $t \in [0,\tau)$:
    \begin{align*}
        |J_i[\tsig]| &\le \frac12\int_{\supp(\tsig_\pm) \times \supp(\tsig_\pm)} |\tsig_+(w) + \tsig_-(w)| |\tsig_+(z)| dwdz\\
        &\le \|\tsig_\pm\|_{L^1} \|\tsig_\pm\|_{L^1}\le \|\tilde \tau\|_{L^1}^2.
    \end{align*}
    The estimate above together with \eqref{est:I1coer} and \eqref{est:I2coer} yields that
    $$
    E_j'(t) \ge c_j - R\|\tilde\tau\|_{L^1}^2 > \frac{3}{4} c_j,\quad t \in [0,\tau)
    $$
    once we choose $R$ sufficiently small. Thus, we must have $\tau > \frac{11}{6}\bar T$ and \eqref{est:weak} is proved.

    Now with the weaker bound \eqref{est:weak} established, the bound \eqref{apriorimoment} actually holds for all $t \in [0,\frac{11}{6}\bar T)$. Then the same argument to that proving \eqref{est:weak} actually yields the following stronger bound:
    \begin{equation}
        \label{est:strong}
        E_j'(t) > \frac34 c_j, \quad t \in[0,\frac{11}{6}\bar T),\; j = 1,2.
    \end{equation}
    \noindent\textbf{Step 3. Proof of merger.}
    Since $\frac{4}{3} \bar T < \frac{11}{6} \bar T$, we recall $\bar T = \frac{|\bar E_2(0)|}{c_2}$ and use \eqref{est:strong} to deduce that
    \begin{align*}
        E_2(\frac{4}{3} \bar T) &= \bar E_2(0) + \int_0^{\frac{4}{3} \bar T} E'_2(s) ds > \bar E_2(0) +  c_2 \bar T\\
        & = \bar E_2(0) + |\bar E_2(0)| \ge 0,
    \end{align*}
    where we used $\bar E_2(0) < 0$ in the final inequality. However, this is absurd since $E_2(t) \le 0$ for all $t \ge 0$. Hence, we must have $\bar T_0 < \frac{11}{6} \bar T < \infty$. Now, an identical topological argument to that described in the proof of Theorem \ref{thm:overlap} yields the desired result.
\end{proof}

We remark that the proof of Corollary \ref{cor:topo} only utilizes Theorem \ref{thm:overlap}, the symmetry property \eqref{oddodd}, and that the topological properties of the level sets of $\sigma_\pm$ are preserved along the evolution. In the case of the full system \ref{eq:fullsys}, this system also preserves the symmetry \eqref{oddodd} as well as the topological properties of level sets, the latter of which holds because the additional term is a transport term driven by a smooth vector field. Hence, an identical argument to that in Corollary \ref{cor:topo} yields the following statement:
\begin{cor}\label{cor:topofull}
    There exists smooth, compactly supported solutions $\sigma_\pm$ to \eqref{eq:fullsys} such that $F = \frac{\sigma_+ + \sigma_-}{2}$ has the following properties: there exists $T > 0$ such that
    \begin{enumerate}
        \item $\Int(\supp(F(\cdot,0)))$ has 4 connected components, but $\Int(\supp(F(\cdot,T)))$ has 2 connected components;
        \item the following trichotomy holds:
        \begin{enumerate}
            \item $\{|F(x,0)| > \frac12\} = \emptyset$, but $\{|F(x,T)| > \frac12\} \neq \emptyset$, or;
            \item $Card(\{|F(x,0)| = \frac12\}) = 4$, but $Card(\{|F(x,T)| = \frac12\}) > 4$, or;
            \item There exists $|\alpha| \in (0,\frac12)$ such that $\alpha$ is a regular value of $F(x,0)$, but a critical value of $F(x,T)$.
        \end{enumerate}
    \end{enumerate}
\end{cor}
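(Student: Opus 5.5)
The plan is to repeat the proof of Corollary \ref{cor:topo} essentially line by line, replacing Theorem \ref{thm:overlap} by Theorem \ref{thm:overlapfull}. I take the initial data \eqref{eq:init}, the rescaling by $R$ of profiles $\tilde\tau_\pm$ satisfying Properties 1--4 from the proof of Theorem \ref{thm:overlap}, with $R$ as small as Theorem \ref{thm:overlapfull} requires. Theorem \ref{thm:overlapfull} then provides a time $T$ with $\calD_T^+\cap\calD_T^-\neq\emptyset$, where $\calD_t^\pm$ are the positivity sets of $|\sigma_\pm|$ in $\{x_1>0\}$. Proposition \ref{prop:symmetry} says the screened system \eqref{eq:fullsys} preserves the symmetries \eqref{oddodd}. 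This is all that the corollary's argument used about the dynamics, apart from transport.

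For part (1), I restrict to $\{x_1>0\}$ using odd-in-$x_1$ symmetry. The sets $\calD_t^\pm$ and their mirror images stay disjoint because each $\sigma_\pm$ is transported by a smooth divergence-free field $v_\pm$ given by \eqref{pmright}. Since the initial sets are simply connected, $\calD_t^\pm$ remain connected and open. The same case analysis as before gives $S_t^>=\calD_t^+\cup\calD_t^-$. On $\calD_t^+\cap\calD_t^-$ both $\sigma_\pm$ have the same sign in $Q$ (nonpositive, by the reflection symmetry and Property 1), so $F\neq 0$ there. Consequently there are 2 components in the right half plane at $t=0$ and 1 at time $T$, which gives 4 and 2 components overall.

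For part (2), the key point is that level set topology is preserved: $\sigma_\pm(t)=\tau_\pm\circ X_\pm^{-1}(t)$ with $X_\pm(t)$ smooth diffeomorphisms, by Theorems \ref{thm:Calphalwp}--\ref{thm:Calphagwp}. Hence $\{|\sigma_\pm(T)|=1\}$ still consists of exactly 4 points, and regular and critical points correspond. The Claim then goes through verbatim. Take a component $(a,b)\times\{0\}$ of $\calD_T^+\cap\calD_T^-\cap\{x_2=0\}$. Rolle's theorem gives a point $x_1^*$ with $\p_1\sigma_+=0$. The reflection symmetry forces $\p_2F=0$ and $\p_1F=\p_1\sigma_+$ on the axis, and $F(x^*,T)=\sigma_+(x^*,T)<0$. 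The trichotomy follows by the same case split on whether $\min F$ over the overlap is $<-\frac12$, $=-\frac12$ or $>-\frac12$.

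The main obstacle is making sure that the overlap at time $T$ produced by Theorem \ref{thm:overlapfull} crosses the $x_1$ axis inside a nonempty open interval, so that Rolle's theorem applies. This comes from the same path-connectedness argument, since $\calD_T^+$ is open, connected and meets both half planes. The constraint $\sigma_\pm>-1$ on the overlap, needed in the subcase $F(x^*,T)=-\frac12$, follows exactly as in Corollary \ref{cor:topo}.
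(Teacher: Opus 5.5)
Your proposal is correct and is essentially the paper's own proof. The paper likewise reruns the argument of Corollary \ref{cor:topo} with Theorem \ref{thm:overlapfull} in place of Theorem \ref{thm:overlap}, relying on Proposition \ref{prop:symmetry} for \eqref{oddodd} and on the fact that the extra $\mathbb K$-term is transport by a smooth field, so level-set topology is preserved.

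Two small wording fixes: $F\neq 0$ on $\calD_t^+\cap\calD_t^-$ because $\sigma_\pm<0$ on $\calD_t^\pm$, which comes from transporting the sign of the data by flows preserving $\{x_1>0\}$, not from Property 1 ``in $Q$'' (at time $T$ the overlap leaves $Q$ and meets the $x_1$ axis). Also, the final case split should be on whether $F$ takes a value $<-\frac12$, takes the value $-\frac12$, or stays $>-\frac12$ on the open overlap, since a minimum there need not be attained.
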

\section{Merger stability}\la{mergstabi}
In this section we compare the dynamics of \eqref{sigmaeq} to the dynamics when $\mathbb B$ is replaced by
$-\mathbb U$. That is, we compare the original screened right-handed model to the unscreened one. In this latter case, in view of \eqref{Vpm},  we have the system
\be
\pa_t \sigma_{+} - \mathbb U(\sigma_{-})\cdot\na \sigma_{+} = 0
\la{sigpleq}
\ee
and
\be
\pa_t \sigma_{-} + \mathbb U(\sigma_{+})\cdot\na\sigma_{-} = 0.
\la{sigmineq}
\ee
We consider the same fixed smooth and compactly supported initial data $\sigma_{\pm}(0)= \tau_{\pm} $ and we consider a situation in which at time zero the initial data have disjoint supports, and in particular,
\be
\tau_+(x)\tau_-(x)= 0,
\la{timezero}
\ee
and at time $T$ we have
\be
\int_{\Rr^2}\sigma_+(x,T)\sigma_-(x,T)dx \ge \gamma>0.
\la{timeone}
\ee
We take now solutions $\widetilde\sigma_{\pm}$ of the screened system \eqref{sigmaeq} with the same initial data and note that
\be
\ba
\left |\int_{\Rr^2} (\widetilde\sigma_+(x,t)\widetilde\sigma_-(x,t) -\sigma_+(x,t)\sigma_{-}(x,t))dx\right | \\
=\left |\int_{\Rr^2}(\delta_+(x,t)\widetilde\sigma_-(x,t) +\delta_-(x,t)\sigma_+(x,t))dx\right|\\
\le \|\delta_+(t)\|_{L^p}\|\tau_-\|_{L^{p'}} + \|\delta_-(t)\|_{L^p}\|\tau_+\|_{L^{p'}}
\ea
\la{prodb}
\ee
where
\be
\delta_{\pm} = \widetilde\sigma_{\pm} -\sigma_{\pm}
\la{deltas}
\ee
and where $p'$ is the conjugate exponent of $p$. We used the fact that both the screened and the unscreened systems conserve $L^{p'}$ norms. We take $p<2$ and we estimate the norms  $\|\delta_{\pm}(t)\|_{L^p}$.

Let us denote 
\be
\mathbb S = \mathbb B + \mathbb U = \na^{\perp} (\mathbb I -\Delta)^{-1}\Delta^{-1},
\la{mathbbS}
\ee
and observe that the equations \eqref{Vpm}
can be written as
\be
\left \{
\ba
\mathbb V_{+} (\sigma) = - \mathbb U(\sigma_-) + \mathbb S\left(\fr{\sigma_+ + \sigma_-}{2}\right)\\
\mathbb V_-(\sigma) =   \mathbb U(\sigma_+) - \mathbb S\left(\fr{\sigma_+ + \sigma_-}{2}\right),\\
\ea
\right .
\la{VpmS}
\ee
The equations obeyed by $\widetilde\sigma_{\pm}$ are thus
\be
\pa_t\widetilde\sigma_+ + \left[- \mathbb U(\widetilde\sigma_-) + \mathbb S(\ov\sigma)\right ]\cdot\na\widetilde\sigma_+ =0,
\la{widesigmapluseq}
\ee
and
\be
\pa_t\widetilde\sigma_- + \left[ \mathbb U(\widetilde\sigma_+) - \mathbb S(\ov\sigma)\right ]\cdot\na\widetilde\sigma_- =0,
\la{widesigmaminuseq}
\ee
where 
\be
\ov\sigma = \fr{1}{2}(\widetilde\sigma_+ + \widetilde\sigma_-)
\la{ovsigma}
\ee
The equations obeyed by $\delta_{\pm}$, in view of \eqref{sigpleq}, \eqref{sigmineq}, 
\eqref{widesigmapluseq} and \eqref{widesigmaminuseq},
\be
\pa_t \delta_+  + \left[- \mathbb U(\widetilde\sigma_-) + \mathbb S(\ov\sigma)\right ]\cdot\na \delta_+
   -\mathbb U(\delta_-)\cdot\na\sigma_+ + \mathbb S(\ov\sigma)\cdot\na\sigma_+ = 0,
\la{deltapluseq}
\ee
and
\be
\pa_t \delta_-  + \left[ \mathbb U(\widetilde\sigma_+) - \mathbb S(\ov\sigma)\right ]\cdot\na\delta_- + \mathbb U(\delta_+)\cdot\na\sigma_- - \mathbb S(\ov\sigma)\cdot\na\sigma_- = 0,
\la{deltaminuseq}
\ee
Because the second terms in both equations are transport terms with incompressible velocities, we obtain
\be
\fr{d}{dt}\|\delta_+\|_{L^p} \le \|U(\delta_-)\cdot\na\sigma_+\|_{L^p} + \|\mathbb S(\ov\sigma)\cdot\na\sigma_+\|_{L^p}
\la{deltpluslp}
\ee
and 
\be
\fr{d}{dt}\|\delta_-\|_{L^p} \le \|U(\delta_+)\cdot\na\sigma_-\|_{L^p} + \|\mathbb S(\ov\sigma)\cdot\na\sigma_-\|_{L^p}
\la{deltminlp}
\ee

We estimate the terms $\mathbb U(\delta)\na\sigma$ using the Hardy-Littlewood-Sobolev inequality
\be
\|U(\delta)\|_{L^r} \le C\|\delta\|_{L^p}
\la{hls}
\ee
where $\fr{1}{r} = \fr{1}{p}- \fr{1}{2}$ (that is why we took $p<2$). Then, using duality, we have
that $U(\delta)\na\sigma$ is estimated in $L^p$ by 
\be
\|U(\delta)\na\sigma\|_{L^p} \le C\|\delta\|_{L^p}\|\na\sigma\|_{L^2}.
\la{goodb}
\ee
We also estimate the terms $\|\mathbb S(\ov\sigma)\na\sigma\|_{L^p}$
\be
\|\mathbb S(\ov\sigma)\na\sigma\|_{L^p}\le \|\na\sigma\|_{L^2} \|\chi\mathbb S(\ov\sigma)\|_{L^r}
\la{forcetermlp}
\ee
where $\chi$ is the characteristic (indicator function) of the support of $\sigma$.
We estimate the kernel of $\mathbb S$. Because its Fourier transform is integrable, it follows that the kernel of $\mathbb S$ is bounded.  
Using the fact that the kernel is bounded, we have
\be
\|\mathbb S(\ov\sigma)\|_{L^{\infty}}\le C\|\ov\sigma\|_{L^1} = C\|\tau\|_{L^1}
\la{sovsiglifty}
\ee
and thus
\be
\|\mathbb S(\ov\sigma)\na\sigma\|_{L^p} \le C\|\na\sigma\|_{L^2}\|\ov\sigma\|_{L^1}\|\chi\|_{L^r}
\la{boundf}
\ee
Now  we have that 
\be
\|\chi(t)\|_{L^r} \le  A^{\fr{1}{r}}
\la{chitr}
\ee
where $A$ is the area of the support of $\tau$.
We have thus
\be
\fr{d}{dt}\|\delta(t)\|_{L^p} \le \|\na \sigma(t)\|_{L^2}\|\delta(t)\|_{L^p} + C\|\na\sigma(t)\|_{L^2}A^{\fr{1}{p}-\fr{1}{2}}\|\tau\|_{L^1}
\la{deltalpev}
\ee
where we use  $\|\na\sigma\|_{L^2} = \max \{\|\na\sigma_{+}\|_{L^2};  \|\na\sigma_{-}\|_{L^2}\}$ and
$\|\delta\|_{L^p} = \|\delta_+\|_{L^p} + \|\delta_-\|_{L^p}$.
We have proved thus
\beg{prop}\la{stab}
Let $\sigma_\pm(t)$ be solutions of the unscreened right-handed system equations \eqref{sigpleq}, \eqref{sigmineq} with smooth compactly supported initial data $\tau_{\pm}$. Let $\widetilde\sigma_{\pm}(t)$
be the solutions of the screened right-handed system \eqref{widesigmapluseq}, \eqref{widesigmaminuseq} with the same initial data. Let $1<p<2$ and let $A$ denote the maximum of the areas of the supports of
$\tau_{\pm}$. 
Then 
\be
\|\widetilde\sigma_{\pm}(t)-\sigma_{\pm}(t)\|_{L^p} \le CA^{\fr{1}{p}-\fr{1}{2}}\|\tau\|_{L^1}
\int_0^t\|\na \sigma (s)\|_{L^2}e^{\int_s^t\|\na\sigma(z)\|_{L^2}dz}ds
\la{lpboundiff}
\ee
\end{prop}
Now we take advantage of the fact that unscreened equations have scaling invariance. Thus, if $\tau_{\pm}$  are functions with length scale $\epsilon$, meaning that they are rescaling $\tau_{\pm}(x) = f_{\pm}\left(\fr{x}{\epsilon}\right)$, with $f$ smooth bounded functions with bounded derivatives and compactly supported in a domain unit size, then 
\be
A^{\fr{1}{p}-\fr{1}{2}}\|\tau\|_{L^1} \sim \epsilon^{\fr{2}{p} +1}
\la{forcesize}
\ee
and, in view of \eqref{nasigmabound} and the volume conservation, we have
\be
\|\na\sigma(t)\|_{L^2} \le \epsilon^{-1+1}e^{Ce^{Ct}} = e^{Ce^{Ct}}
\la{nasigmaunif}
\ee
 is bounded independently of epsilon. From \eqref{deltalpev}, in view of the fact that the initial data for $\delta $ vanish, and the uniform in $\epsilon$ bound \eqref{nasigmaunif}, we have that, for any $T$, there exists
 a constant $C(T)$ independent of $\epsilon$, such that,
\be
\|\widetilde\sigma_{\pm}(t) - \sigma_{\pm}(t)\|_{L^p} \le \epsilon^{\fr{2}{p} +1}C(T)
\la{stabest}
\ee
holds for all $0\le t\le T$.  In particular,  because the lower bound $\gamma$ of \eqref{timeone} is of the order $\epsilon^2$, $\gamma\ge c\epsilon^2$, then we see from \eqref{prodb} and \eqref{stabest} that
\be
\int_{\Rr^2}\widetilde\sigma_+(x,T)\widetilde\sigma_-(x,T)dx \ge c\epsilon^2 -
C(T)\epsilon^{\fr{2}{p'} + \fr{2}{p} +1} = c\epsilon^2- C(T)\epsilon^3
\la{voila}
\ee
In view of the fact that we are free to choose $\epsilon$ and observe the unscreened merger at time $T$ we see that the screened evolution must also lead to merger.

\beg{thm}\la{stabmerg} Assume that there exist smooth compactly supported initial data $\tau_{\pm}$ of the unscreened right-handed system \eqref{sigpleq}, \eqref{sigmineq} that have disjoint supports, and such that there exists $T>0$  such that \eqref{timeone} holds. Then there exist smooth compactly supported initial data $\widetilde\tau_{\pm}$ of the original screened right-handed system \eqref{widesigmapluseq}, \eqref{widesigmaminuseq} whose supports are disjoint and whose solutions $\widetilde\sigma_{\pm}(t)$ obey  at time $T$
\be
\int_{\Rr^2}\widetilde\sigma_+(x,T)\widetilde\sigma_-(x,T)dx >0.
\la{mergwide}
\ee
\end{thm}
\beg{proof} We take $\widetilde\tau_{\pm} = \tau_{\pm}\left (\fr{x}{\epsilon}\right)$ and choose $\epsilon$ small enough.
\end{proof}

\beg{rem}\la{likewise} The exact same result holds for the left-handed equations, with the same proof.
\end{rem}

We compare also the Lagrangian trajectories $X_{\pm}$ to the Lagrangian trajectories $Y_{\pm}$ of the system
\eqref{sigpleq}-\eqref{sigmineq}.  We note that 
\be
\sigma_+ = \tau_{+}\circ Y_+^{-1}
\la{sigpYmin}
\ee
and
\be
\sigma_- = \tau_-\circ Y_{-}^{-1}
\la{sigminYpl}
\ee
where
\be
\pa_t Y_{+} = -\mathbb U\left (\tau_-\circ Y_-^{-1}\right)\circ Y_{+}
\la{Ypleq}
\ee
and
\be
\pa_t Y_{-} = \mathbb U\left (\tau_+\circ Y_+^{-1}\right)\circ Y_{-}
\la{Ymineq}
\ee

Now \eqref{Xeq} can be written as
\be
\pa_t X_+ = -\mathbb U\left (\tau_-\circ X_-^{-1}\right)\circ X_{+}  + \fr{1}{2}\mathbb S\left(\tau_+\circ X_{+}^{-1} + \tau_-\circ X_-^{-1}\right)\circ X_{+}
\la{XplSeq}
\ee
and
\be
\pa_t X_- = \mathbb U\left (\tau_+\circ X_+^{-1}\right)\circ X_{-}  - \fr{1}{2}\mathbb S\left(\tau_+\circ X_{+}^{-1} + \tau_-\circ X_-^{-1}\right)\circ X_{-}
\la{XminSeq}
\ee
The functions $\tau_{\pm}$ are smooth, time independent and compactly supported. We note that
 $X= (X_+, X_-)$ and $Y = (Y_+, Y_-)$  starting from the identity maps, are defined and smooth for all time. 
 We aim to give bounds in  situations in which  and $X_\pm$ remain close respectively to $Y_\pm$ although $Y_+$ and $Y_-$ are not close. We consider the differences
\be
Z_{\pm}(a,t) = X_{\pm}(a,t)- Y_{\pm}(a,t)
\la{Zpm}
\ee
and their evolution
\be
\pa_t Z_{+} = \mathbb U\left (\tau_-\circ Y_-^{-1}\right)\circ Y_{+} -\mathbb U\left (\tau_-\circ X_-^{-1}\right)\circ X_{+} + \fr{1}{2}\mathbb S\left(\tau_+\circ X_{+}^{-1} + \tau_-\circ X_-^{-1}\right)\circ X_{+}
\la{Zpleq}
\ee
and
\be
\pa_t Z_{-} =  \mathbb U\left (\tau_+\circ X_+^{-1}\right)\circ X_{-} -\mathbb U\left (\tau_+\circ Y_+^{-1}\right)\circ Y_{-} - \fr{1}{2}\mathbb S\left(\tau_+\circ X_{+}^{-1} + \tau_-\circ X_-^{-1}\right)\circ X_{-}
\la{Zmineq}
\ee
We define for $s\in [0,1]$
\be
X_{\pm}^s(a,t) = s X_{\pm}(a,t) + (1- s) Y_{\pm}(a,t).
\la{Xmpms}
\ee
We have
\be 
Z_{\pm}(a,t) = \fr{d}{ds} X^s_{\pm}(a,t)
\la{Zds}
\ee
and 
\be
\pa_t Z_{+} = -\int_0^1 \fr{d}{ds}\left(\mathbb U\left (\tau_-\circ (X^s_-)^{-1}\right)\circ X_+^s\right) ds 
+ \fr{1}{2}\mathbb S\left(\tau_+\circ X_{+}^{-1} + \tau_-\circ X_-^{-1}\right)\circ X_{+},
\la{Zplds}
\ee
\be
\pa_tZ_- = \int_0^1\fr{d}{ds}\left(\mathbb U\left (\tau_+\circ (X_+^s)^{-1}\right)\circ X_{-}^s\right)ds 
 - \fr{1}{2}\mathbb S\left(\tau_+\circ X_{+}^{-1} + \tau_-\circ X_-^{-1}\right)\circ X_{-}
\la{Zminds}
\ee
We consider the terms with $\mathbb S$ as forcing terms and show that the $\mathbb U$ terms lead to a controlled Lipschitz evolution. Let us denote
\be
\eta_{\pm} = Z_{\pm}\circ (X^s_{\pm})^{-1}
\ee
We have
\be
-\fr{d}{ds}\left( \mathbb U\left (\tau_-\circ (X^s_-)^{-1}\right)\circ X_+^s\right)  = 
\mathbb U(\eta_-\cdot\nabla(\tau_-\circ (X^s_-)^{-1})\circ X_+^s  - (\na\mathbb U(\tau_-\circ (X^s_-)^{-1})\circ X_+^s)Z_+
\la{dsUplus}
\ee
and 
\be
\fr{d}{ds} \left(\mathbb U\left (\tau_+\circ (X_+^s)^{-1}\right)\circ X_{-}^s\right) = (\na\mathbb U(\tau_+\circ (X^s_+)^{-1})\circ X_-^s)Z_- -\mathbb U(\eta_+\cdot\nabla(\tau_+\circ (X^s_+)^{-1})\circ X_-^s.
\la{dsUminus}
\ee
Composing from the right in \eqref{dsUplus} with $(X_+^s)^{-1}$ and in \eqref{dsUminus} with $(X_{-}^s)^{-1}$ we have
\be
 -\left [\fr{d}{ds}\left( \mathbb U\left (\tau_-\circ (X^s_-)^{-1}\right)\circ X_+^s\right) \right] \circ (X^s_+)^{-1} =
 \mathbb U(\eta_-\cdot\nabla \sigma_-^s) - \eta_+\cdot\na \mathbb U(\sigma_-^s)
 \la{dsUminuseu}
 \ee
 and
 \be
 \left [\fr{d}{ds} \left(\mathbb U\left (\tau_+\circ (X_+^s)^{-1}\right)\circ X_{-}^s\right)\right]\circ (X_-^s)^{-1}
= \eta_-\cdot\na \mathbb U(\sigma_+^s) - \mathbb U(\eta_+\cdot \na\sigma_+^s)
\la{dsUpluseu}
\ee
 where 
 \be
 \sigma_{\pm}^s = \tau_{\pm}\circ(X^s_{\pm})^{-1}.
 \la{sigmaspm}
 \ee
 Note the fact that if $\eta_-$ and $\eta_+$ would be the same, then we would obtain a commutator structure.
 This is related to the important commutator structure in single fluid equations \cite{hydro}, when there is no distinction between $Y_+$ and $Y_-$. That commutator structure is the reason why for single fluid equations the Lagrangian path evolution is Lipschitz in path space without loss of derivatives. Here however we do loose derivatives because $Y_+$ and $Y_-$ are not the same.

 We consider an arbitrary time interval $[0,T]$ and take full advantage of the fact that we have uniform a priori information on $\sigma_{\pm}$, and on both $Y_{\pm}$ and $X_{\pm}$ and their inverses. In particular,  we have the following a priori information. From \eqref{naVlifty} and with $A(t)$ defined in \eqref{At},
 \be
 A(t) = \exp\left[ C\left( e^{Ct\|\tau\|_{L^{\infty}}} -1\right )\log\left (2 + \fr{\|\na\tau\|_{L^{\infty}}\|\tau\|_{L^1}^{\fr{1}{2}}}{\|\tau\|_{L^{\infty}}^{\fr{3}{2}}}\right)\right]
\la{Atagain}
 \ee
 we have
 \be
 \|\na \mathbb U(\sigma_{\pm}^s(t))\|_{L^{\infty}}\le C\|\tau\|_{L^{\infty}}A(t)
 \la{naUsigmab}
 \ee
 and  with \eqref{nasigmalpbA}
 \be
 \|\na\sigma_{\pm}^s(t)\|_{L^p} \le A(t) \|\na\tau_{\pm}\|_{L^p} 
 \la{nasigmalpagain}
 \ee
 In order to track the $L^{\infty}$ norm of $Z$ we estimate separately using the kernels of $\mathbb U$, and $\mathbb S$. 
We have directly form \eqref{naUsigmab}
\be
\|\eta_+\cdot\na \mathbb U(\sigma_-^s)\|_{L^{\infty}}  \le C\|\eta_+\|_{L^{\infty}}\|\tau\|_{L^{\infty}}A(t)
\la{gradUeta+}
\ee
and
\be
 \|\eta_-\cdot\na \mathbb U(\sigma_+^s)\|_{L^{\infty}}\le  C\|\eta_-\|_{L^{\infty}}\|\tau\|_{L^{\infty}}A(t)
\la{gradUeta-}
\ee
The terms $\mathbb U(\eta_{\pm}\na \sigma_{\pm}^s)$ are estimated using an upper bound on the kernel of $\mathbb U$. We use a useful extrapolation inequality for the Hardy-Littlewood-Sobolev end point estimate, 
\be
\left | \int_{\R^2}\fr{g(y)}{|x-y|}dy\right | \le C\|g\|_{L^2}\left [1 + \log\left(\fr{\|g\|_{L^1}\|g\|_{L^{\infty}}}{\|g\|_{L^{2}}^2}\right)\right].
\la{extrahls}
\ee
The proof of this estimate follows directly by splitting the integral in regions $|x-y|\le l$, $l\le |x-y|\le L$ and
$|x-y|\ge L$ with $l = \fr{\|g\|_{L^2}}{\|g\|_{L^{\infty}}}$ and $L = \fr{\|g\|_{L^1}}{\|g\|_{L^2}}$ and noticing that always $l\le L$. We apply this with $g= |\na\sigma_{\pm}^s(t)|$ and obtain
\be
\|\mathbb U(\eta_{\pm}\na \sigma_{\pm}^s(t)\|_{L^{\infty}} \le CA(t)\|\eta_{\pm}(t)\|_{L^{\infty}}\|\na\tau\|_{L^2}
\left [1 + \log\left(\fr{\|\na\tau\|_{L^1}\|\na\tau\|_{L^{\infty}}}{\|\na\tau\|_{L^{2}}^2}\right)\right].
\la{Uetagradb}
\ee
We did this in order to obtain a scale invariant bound. This means that the bound depends on the initial norms with a loss of derivative, but the factors are scale invariant, and uniform in time on $[0,T]$. 
Fixing $T$ for simplicity of exposition and putting
\be
\Gamma(T) = CA(T)\|\tau\|_{L^{\infty}}\left[1 + \fr{\|\na\tau\|_{L^2}}{\|\tau\|_{L^{\infty}}} \log\left(\fr{\|\na\tau\|_{L^1}\|\na\tau\|_{L^{\infty}}}{\|\na\tau\|_{L^{2}}^2}\right)\right],
\la{GammaT}
\ee
we have 
\be
\|Z(t)\|_{L^{\infty}} \le \Gamma(T)\int_0^t\|Z(\nu)\|_{L^{\infty}}d\nu + \int_0^1ds\int_0^t\|\mathbb S(\sigma_{\pm}^s(\nu))\|_{L^{\infty}}d\nu
\la{beforegr}
\ee
Now the kernel of $\mathbb S$ is bounded and continuous. Using it we have that
\be
\|\mathbb S(\sigma_+^s + \sigma_-^s)\|_{L^\infty} \le C\|\tau\|_{L^1}.
\la{Sftybound}
\ee
This is not scale invariant, because of the screening. Returning to \eqref{beforegr} we proved
\beg{prop}\la{linftymerger}
Let $\tau_{\pm}$ be smooth and compactly supported initial data. Let $T>0$ be a fixed time. There exists a scale invariant constant $\Gamma(T)$ depending on norms $\|\tau\|_{W^{1,p}}$, $1\le p\le\infty $,  such that difference of Lagrangian paths obeys 
\be
\sup \|X_{\pm}(t)-Y_{\pm}(t)\|_{L^{\infty}}\le C\|\tau\|_{L^1} e^{T\Gamma(T)}
\la{supZ}
\ee
for $0\le t\le T$.
\end{prop}
The main point here is that if we consider a family  $\tau_{\pm} = f_{\pm}(\fr{x}{\epsilon})$ like before, then 
$\Gamma(T)$ is {\em{independent  of $\epsilon$}} while $\|\tau\|_{L^1}$ scales like $\epsilon^2$.
 \be
 \|Z(t)\|_{L^{\infty}} \le Ce^{T\Gamma(T)}\epsilon^2.
 \la{Zftyeps}
 \ee
 This bound implies that the distance between Lagrangian trajectories is significantly less than $\epsilon$. 
The unscreened merger occurs at scale $\epsilon$ at times of order one.

 \section{The limit of zero resistivity}\la{invlim}
 
 We investigate here resistive regularization and the zero resistivity limit in a subcritical regime. 
  The viscous version to the system \eqref{sigmaeq}--\eqref{Vpm} is written as follows:
 \begin{equation}
     \label{sigmaeqvis}
     \pa_t \sigma_{\pm} + \mathbb V_{\pm}(\sigma)\cdot\na \sigma_{\pm} - \nu_\pm \Delta \sigma_\pm=0, \quad \nu_\pm \ge 0,
 \end{equation}
 where we recall the relation
 $$
 \mathbb V_{\pm}(\sigma) = \fr{1}{2}\left (\mathbb U (\sigma_{+}-\sigma_{-}) \pm \mathbb B(\sigma_+ +\sigma_-)\right ).
 $$
The goal of this section is to establish an inviscid limit result concerning sufficiently regular solutions of \eqref{sigmaeqvis} in Sobolev spaces. As a preliminary, we first establish the following global well-posedness result for \eqref{sigmaeqvis} in subcritical Sobolev spaces:
\begin{thm}
    Let $s > 1$ and $\tau_\pm \in (H^s \cap L^1)(\R^2)$. For any $\nu_\pm \ge 0$, there exists a unique solution $\sigma_\pm \in C([0,\infty); (H^s \cap L^1)(\R^2))$ to \eqref{sigmaeqvis} with initial data $\sigma_\pm(x,0) = \tau_\pm(x)$ such that for any $t \ge 0$,
    \begin{equation}
        \label{est:visenergy}
        \|\sigma(t)\|_{H^s} \le \|\tau\|_{H^s} e^{Ce^{Ct}},
    \end{equation}
    where $C > 0$ is a constant that is independent of $\nu_\pm$.
\end{thm}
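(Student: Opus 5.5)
We will prove the theorem in three stages: local well-posedness, $\nu$-independent a priori estimates, and continuation. The $\nu$-independence is the point, because the viscous terms only help.

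\textbf{Local existence.} For $\nu_\pm>0$ we build local solutions by a standard scheme. One option is a Galerkin/mollification scheme. Another is a contraction for the mild formulation $\sigma_\pm(t)=e^{\nu_\pm t\Delta}\tau_\pm-\int_0^t e^{\nu_\pm (t-s)\Delta}\na\cdot(\V_\pm(\sigma)\sigma_\pm)\,ds$ in $C([0,T];H^s\cap L^1)$. For $\nu_\pm=0$ we use mollified data and the smooth global theory (Theorem \ref{thm:Calphagwp}), and pass to the limit with the uniform bounds below. The inputs are that $\V_\pm$ is divergence free and that $\|\na\V_\pm(\sigma)\|_{H^{s}}\le C\|\sigma\|_{H^{s}}$. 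The latter holds because $\na\mathbb U$ is a Riesz-type operator of order zero and $\na\mathbb B$ is even smoother. Low frequencies are controlled by the $L^1$ norm: $\|\V_\pm\|_{L^\infty}\le C\|\sigma\|_{L^1\cap L^\infty}$, and $H^s\subset L^\infty$ since $s>1$. Uniqueness follows from an $L^2$ (or $H^{s-1}$) energy estimate on the difference, using $\|\na\sigma\|_{L^\infty}$-type control only in subcritical form. Alternatively, it follows from the Yudovich uniqueness of Theorem \ref{thm:Yudovich}, whose energy argument only improves with viscosity: the dissipative terms $-\nu\|\na\delta\|^2$ have a sign.

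\textbf{A priori bounds.} First, $\|\sigma_\pm(t)\|_{L^p}\le\|\tau_\pm\|_{L^p}$ for all $1\le p\le\infty$, uniformly in $\nu$. This comes from the maximum principle and from multiplying by $|\sigma|^{p-2}\sigma$ for a transport-diffusion equation with divergence-free drift. Next we apply $\Lambda^s$ to \eqref{sigmaeqvis} and pair with $\Lambda^s\sigma_\pm$. The transport term $\int \V\cdot\na\Lambda^s\sigma\,\Lambda^s\sigma$ vanishes, and the viscous term contributes $-\nu\|\na\Lambda^s\sigma\|_{L^2}^2\le 0$. The Kato--Ponce commutator estimate then gives
\[
\fr{d}{dt}\|\sigma\|_{H^s}\le C\big(\|\na\V(\sigma)\|_{L^\infty}+\|\na\sigma\|_{L^\infty}^{\ast}\big)\|\sigma\|_{H^s}.
\]
More precisely, the commutator is bounded by $\|\na \V\|_{L^\infty}\|\sigma\|_{H^s}+\|\V\|_{\dot H^s}\|\na\sigma\|_{L^\infty}$. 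We avoid $\|\na\sigma\|_{L^\infty}$ in the standard way, using $\|[\Lambda^s,\V\cdot\na]\sigma\|_{L^2}\le C(\|\na\V\|_{L^\infty}\|\sigma\|_{H^s}+\|\na \V\|_{H^{s}}\|\sigma\|_{L^\infty})$ together with $\|\na\V\|_{H^s}\le C\|\sigma\|_{H^s}$ (from $s>1$ and the $L^1$ bound for the low modes). This yields
\[
\fr{d}{dt}\|\sigma\|_{H^s}\le C\big(\|\na\V(\sigma)\|_{L^\infty}+\|\tau\|_{L^\infty}\big)\|\sigma\|_{H^s}.
\]

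\textbf{Closing the estimate (main obstacle).} The hardest step is bounding $\|\na\V\|_{L^\infty}$ without circularity. The plan is the logarithmic Brezis--Gallouet/Beale--Kato--Majda inequality, the $H^s$ analogue of \eqref{extra}:
\[
\|\na\V_\pm(\sigma)\|_{L^\infty}\le C\|\sigma\|_{L^1\cap L^\infty}\big(1+\log(2+\|\sigma\|_{H^s})\big),
\]
which is valid since $s>1$. It is proved by splitting frequencies below $1$, between $1$ and $N$, and above $N$, and optimizing in $N$. Inserting it, and using the $\nu$-uniform bound $\|\sigma\|_{L^1\cap L^\infty}\le\|\tau\|_{L^1\cap L^\infty}$, gives
\[
\fr{d}{dt}\log(2+\|\sigma\|_{H^s})\le C\big(1+\log(2+\|\sigma\|_{H^s})\big).
\]
Integrating gives the double exponential \eqref{est:visenergy}, with $C$ depending only on $\|\tau\|_{L^1\cap L^\infty}$ and not on $\nu_\pm$. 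Since the $H^s$ norm cannot blow up in finite time, a continuation argument extends the local solution globally. Time continuity in $H^s$ follows by the usual argument: weak continuity, plus continuity of the norm via the energy equality, or via mollified-data approximation when $\nu=0$.
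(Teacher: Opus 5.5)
Your proposal follows the same route as the paper. There is an $H^s$ energy estimate in which the dissipative terms are simply discarded, which gives uniformity in $\nu_\pm$. A commutator bound then yields $\frac{d}{dt}\|\sigma\|_{H^s}\le C\big(\|\nabla\mathbb V(\sigma)\|_{L^\infty}+\|\sigma\|_{L^\infty}\big)\|\sigma\|_{H^s}$, and a logarithmic bound on $\|\nabla\mathbb V\|_{L^\infty}$ is fed into Gr\"onwall. Your divergence-form commutator estimate and your $H^s$-based logarithmic inequality are in fact what the paper's sketch needs, for two reasons. First, \eqref{extra} has $\|\nabla\sigma\|_{L^\infty}$ inside the logarithm, which $H^s$ does not control for $1<s\le 2$. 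Second, under viscosity $\|\sigma\|_{L^\infty}$ is only non-increasing, not conserved.

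The one step that is wrong as written is your logarithmic inequality $\|\nabla\mathbb V_\pm(\sigma)\|_{L^\infty}\le C\|\sigma\|_{L^1\cap L^\infty}\big(1+\log(2+\|\sigma\|_{H^s})\big)$. It is not homogeneous in $\sigma$, and it is false. Apply it to $\epsilon\sigma$, divide by $\epsilon$ and let $\epsilon\to 0$: you would get $\|\nabla\mathbb V_\pm(\sigma)\|_{L^\infty}\le C\|\sigma\|_{L^1\cap L^\infty}$. That is boundedness on $L^1\cap L^\infty$ of the order-zero singular integral contained in $\nabla\mathbb V_\pm$, which fails; this is why Yudovich velocities are only log-Lipschitz, cf.\ \eqref{est:loglipV}.

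What your three-band splitting actually gives, choosing $N$ so that the tail $N^{1-s}\|\sigma\|_{H^s}$ is at most $1$, is
\[
\|\nabla\mathbb V_\pm(\sigma)\|_{L^\infty}\le C\big(1+\|\sigma\|_{L^1}+\|\sigma\|_{L^\infty}\log(2+\|\sigma\|_{H^s})\big),
\]
with an additive constant. This is nondecreasing in $\|\sigma\|_{L^1}$ and $\|\sigma\|_{L^\infty}$, so you can insert the $\nu$-uniform bounds $\|\sigma(t)\|_{L^p}\le\|\tau\|_{L^p}$. Then $y=\log(2+\|\sigma\|_{H^s})$ still satisfies $y'\le C(1+y)$ with $C=C(\|\tau\|_{L^1\cap L^\infty})$, and the argument closes.

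Integrating, however, gives $\|\sigma(t)\|_{H^s}\le\|\tau\|_{H^s}\exp\big((1+\log(2+\|\tau\|_{H^s}))(e^{Ct}-1)\big)$, the same structure as \eqref{aprb}. So the constant in front of the double exponential also involves $\log(2+\|\tau\|_{H^s})$, not only $\|\tau\|_{L^1\cap L^\infty}$ as you assert. This is harmless, since the theorem only claims independence of $\nu_\pm$.

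A smaller point concerns your alternative uniqueness route. When $\nu_+\neq\nu_-$, the viscous terms do not simply have a sign in the paper's Yudovich energy $E^2=\|\nabla\psi\|_{L^2}^2+\|\nabla\phi\|_{L^2}^2+\|\phi\|_{L^2}^2$. In the $(F,\omega)$ variables the two viscosities mix, and after absorbing what the dissipation controls, the cross term $\frac{\nu_+-\nu_-}{2}\int\nabla\phi\cdot\nabla\psi\,dx$ remains. It is bounded by $\frac{\nu_++\nu_-}{4}E^2$, so the argument still closes, but viscosity is not a pure gain there.
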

\begin{proof}
    We provide suitable a priori estimates. Concerning a priori $L^1$ bounds, we have $\|\sigma(t)\|_{L^1} \le \|\tau\|_{L^1}$: when $\nu_\pm > 0$, since $\V_\pm$ is divergence-free, the claimed bound follows from the standard fact that the solution map to the advection-diffusion equation with divergence-free velocity is a $L^1$ contraction. When $\nu_\pm = 0$, we actually have conservation of $\|\sigma\|_{L^1}$ due to transport by a volume-preserving diffeomorphism.
    
    Let us consider $s > 1$ to be an integer for simplicity. Also, we denote $\na^i$ to be a generic $i$--th order spatial derivative. Fix $0 \le i \le s$. Differentiating \eqref{sigmaeqvis} by $\na^i$, testing the differentiated equation by $\nabla^i \sigma_\pm$, and using incompressibility, we have
    \begin{align*}
        \frac12 \frac{d}{dt}\|\nabla^i \sigma_\pm\|_{L^2}^2 + \nu_\pm \|\nabla^i \nabla\sigma_\pm\|_{L^2}^2 = -\int_{\R^2} [\na^i, \mathbb V(\sigma)\cdot \nabla] \sigma_\pm \nabla^i \sigma_\pm dx.
    \end{align*}
    Familiar calculus and commutator estimates yield:
    \begin{align*}
        \frac12 \frac{d}{dt}\|\nabla^i \sigma_\pm\|_{L^2}^2 + \nu_\pm \|\nabla^i \nabla\sigma_\pm\|_{L^2}^2 \le C(\|\nabla \mathbb V(\sigma)\|_{L^\infty} + \|\sigma\|_{L^\infty})\|\nabla^i \sigma_\pm\|_{L^2}^2.
    \end{align*}
    Summing over $i= 0,\hdots, s$ and an application of Gr\"onwall inequality yield that, for any $\nu_\pm \ge 0$, $t \ge 0$:
    \begin{equation}
        \label{energy1}
        \|\sigma_\pm\|_{H^s}^2 \le \|\tau_\pm\|_{H^s}^2 \exp\left(C\int_0^t \|\nabla \mathbb V(\sigma)(s)\|_{L^\infty} + \|\sigma(s)\|_{L^\infty} ds\right).
    \end{equation}
    Note that by Sobolev embedding, there exists $\alpha(s) > 0$ such that $\sigma_\pm \in (C^\alpha \cap L^1)(\R^2)$. Then an application of the inequality \eqref{extra} and conservation of $\|\sigma_\pm\|_{L^\infty}$ to \eqref{energy1} immediately yields the desired estimate \eqref{est:visenergy}.
\end{proof}

\begin{thm}
    Let $s > 2$ and $\tau_\pm \in H^s(\R^2)$ be compactly supported. Let $\bar\sigma_\pm \in C([0,\infty);H^s \cap L^1)$ ($\sigma_\pm \in C([0,\infty);H^s \cap L^1)$) be the global solution to the inviscid problem \eqref{sigmaeq} (viscous problem \eqref{sigmaeqvis}) with initial datum $\tau_\pm$. Then the following statements hold: for any $T \ge 0$, $t \in [0,T]$, $\nu_\pm < 1$, we have
    \begin{enumerate}
        \item $\|\sigma_\bullet - \bar\sigma_\bullet\|_{H^{s-1}\cap L^1} \le C(T)((\nu_+ + \nu_-) t)^\frac12 $, $\bullet = +,-$;
        \item For any $s' \in (s-1,s)$, $\|\sigma_\bullet - \bar\sigma_\bullet\|_{H^{s'}\cap L^1} \le C(T)((\nu_+ + \nu_-) t)^\frac{s-s'}{2} $, $\bullet = +,-$.
    \end{enumerate}
\end{thm}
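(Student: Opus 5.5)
We set $\delta_\pm = \sigma_\pm - \bar\sigma_\pm$ and derive its equation by subtracting \eqref{sigmaeq} from \eqref{sigmaeqvis}:
\[
\pa_t\delta_\pm + \V_\pm(\sigma)\cdot\na\delta_\pm + \V_\pm(\delta)\cdot\na\bar\sigma_\pm - \nu_\pm\Delta\delta_\pm = \nu_\pm\Delta\bar\sigma_\pm, \qquad \delta_\pm(0)=0,
\]
using linearity of $\V_\pm$. The plan has three parts: an $H^{s-1}$ energy estimate, a separate $L^1$ bound, and interpolation for the intermediate statement. Throughout we work on $[0,T]$ and take as given the bound \eqref{est:visenergy}, which holds uniformly in $\nu_\pm$ for both $\sigma$ and $\bar\sigma$. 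Since $s>2$, Sobolev embedding then controls $\|\na\bar\sigma\|_{L^\infty}$, $\|\na \V(\sigma)\|_{L^\infty}$ and $\|\V(\sigma)\|_{W^{s-1,\infty}}$-type quantities by $C(T)$.

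\textbf{Step 1: $H^{s-1}$ estimate.} Take $s-1$ derivatives (integer case for simplicity, as in the previous proof) and test with $\na^{i}\delta_\pm$ for $0\le i\le s-1$. The transport term contributes only a commutator $[\na^i,\V(\sigma)\cdot\na]\delta$, bounded by $C(\|\na\V(\sigma)\|_{L^\infty}+\|\V(\sigma)\|_{H^{s}})\|\delta\|_{H^{s-1}}^2\le C(T)\|\delta\|_{H^{s-1}}^2$. For the term $\V(\delta)\cdot\na\bar\sigma$ we use the product estimate $\|\V(\delta)\cdot\na\bar\sigma\|_{H^{s-1}}\lesssim \|\V(\delta)\|_{L^\infty}\|\bar\sigma\|_{H^s}+\|\V(\delta)\|_{H^{s-1}}\|\na\bar\sigma\|_{L^\infty}$. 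The low-frequency part of $\V(\delta)$ involves $\Delta^{-1}$, so we need $\|\V(\delta)\|_{L^\infty}+\|\V(\delta)\|_{L^2}\lesssim\|\delta\|_{L^1\cap L^\infty}$, or in $L^2$ only $\|\delta\|_{L^1\cap L^2}$; this is why $L^1$ must be carried along. The forcing is handled by moving one derivative onto the test function:
\[
\nu\left|\int\na^i\Delta\bar\sigma\,\na^i\delta\right|\le \nu\|\bar\sigma\|_{H^{s}}\|\na\delta\|_{H^{s-1}}\le \tfrac{\nu}{2}\|\na\delta\|_{H^{s-1}}^2+C\nu\|\bar\sigma\|_{H^s}^2 .
\]
The first piece is absorbed by the dissipation. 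This yields
\[
\frac{d}{dt}\|\delta\|_{H^{s-1}}^2\le C(T)\left(\|\delta\|_{H^{s-1}}^2+\|\delta\|_{L^1}^2\right)+C(T)(\nu_++\nu_-).
\]

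\textbf{Step 2: $L^1$ bound (main obstacle).} I expect the $L^1$ estimate to be the delicate step, because the energy method does not control it and the forcing $\nu\Delta\bar\sigma$ is only in $L^1$ with size $\nu$, not $\nu^{1/2}$. I would use the $L^1$-contractivity of the advection-diffusion semigroup with divergence-free drift to get
\[
\frac{d}{dt}\|\delta\|_{L^1}\le \|\V(\delta)\cdot\na\bar\sigma\|_{L^1}+\nu\|\Delta\bar\sigma\|_{L^1}.
\]
Compact support of $\tau$ and finite speed of propagation for $\bar\sigma$ (bounded velocity) put $\na\bar\sigma$ in $L^2$ with support of bounded area. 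Hence $\|\V(\delta)\cdot\na\bar\sigma\|_{L^1}\le\|\V(\delta)\|_{L^2(\text{supp})}\|\na\bar\sigma\|_{L^2}\lesssim C(T)\|\delta\|_{L^1\cap L^2}$, using HLS-type bounds on bounded sets. For the forcing, $\|\Delta\bar\sigma\|_{L^1}\le |{\rm supp}|^{1/2}\|\bar\sigma\|_{H^2}\le C(T)$ since $s>2$. This gives $\frac{d}{dt}\|\delta\|_{L^1}\le C(T)(\|\delta\|_{L^1}+\|\delta\|_{H^{s-1}})+C\nu$.

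\textbf{Step 3: Conclusion.} Setting $Y=\|\delta\|_{H^{s-1}}^2+\|\delta\|_{L^1}^2$, Steps 1 and 2 combine to $Y'\le C(T)Y+C(T)\nu$ (we use $\nu t\le T$ and $\nu<1$ to absorb the $\nu^2$ term). Grönwall with $Y(0)=0$ gives $Y\le C(T)\nu t$, which is statement (1). For statement (2), interpolate between $H^{s-1}$ and $H^s$: $\|\delta\|_{H^{s'}}\le\|\delta\|_{H^{s-1}}^{s-s'}\|\delta\|_{H^s}^{s'-s+1}$. The $H^s$ factor is bounded by $C(T)$ by \eqref{est:visenergy}, so this yields $C(T)(\nu t)^{(s-s')/2}$, and the $L^1$ part is already controlled by Step 3.
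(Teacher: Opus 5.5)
Your proposal is correct and follows essentially the paper's route. The paper also combines an $H^{s-1}$ energy estimate (viscous forcing moved onto the dissipation), a separate $L^1$ bound (from $L^1$-contractivity and the bounded area of $\supp\bar\sigma$), Gr\"onwall for $\|\delta\|_{H^{s-1}}^2+\|\delta\|_{L^1}^2$, and interpolation for (2); your HLS-on-bounded-sets treatment of $\V(\delta)\cdot\na\bar\sigma$ in the $L^1$ step is only a cosmetic variant of the paper's $\|\V(d)\|_{L^\infty}\|\na\bar\sigma\|_{L^1}$.

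One correction: $\|\V(\delta)\|_{L^2}$ is not controlled by $L^1\cap L^\infty$ norms in 2D, and the same holds for the $\|\V(\sigma)\|_{H^s}$ in your commutator bound. The reason is that the symbol of $\na^\perp\Delta^{-1}$ is not square-integrable near $\xi=0$. You never need these quantities, though. The undifferentiated $\V(\delta)$ enters only through $\|\V(\delta)\|_{L^\infty}\lesssim\|\delta\|_{L^1}^{1/2}\|\delta\|_{L^\infty}^{1/2}$. All other terms involve only $\na\V(\delta)$ (resp. $\na\V(\sigma)$), which is bounded on $H^k$. This is exactly the paper's split of $I_2$ into $I_{21}$ and $I_{22}$.
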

\begin{proof}
    Let $T \ge 0$ be fixed. First we observe that, by \eqref{est:visenergy}, there exists $C(T) > 0$ such that
    \begin{equation}
        \label{est:uniform}
        \|\sigma\|_{H^s} + \|\bar\sigma\|_{H^s} \le C(T).
    \end{equation}
    Denote $d_\pm = \sigma_\pm - \bar\sigma_\pm$. Then the difference $d_\pm$ satisfies the following equation:
    \begin{equation}
        \label{eq:dpm}
        \pt d_\pm + \mathbb V_{\pm}(\sigma)\cdot\na d_{\pm} + \mathbb V_{\pm}(d)\cdot\na \bar\sigma_{\pm} - \nu_\pm \Delta \sigma_\pm = 0.
    \end{equation}
    equipped with initial datum $d_\pm(x,0) = 0$.
    
    For simplicity, let us consider $s > 2$ be an integer. For any $0 \le i \le s-1$, we differentiate \eqref{eq:dpm} by $\na^i$ and test by $\na^i d_\pm$. It holds that
    \begin{align*}
        \frac12 \frac{d}{dt} \|\na^i d_\pm\|_{L^2}^2 + \nu_\pm \|\na^i\na d_\pm\|_{L^2}^2 &= -\int_{\R^2} [\na^i, \mathbb V_\pm(\sigma)\cdot \nabla] d_\pm \nabla^i d_\pm dx\\
        &\quad -\int_{\R^2} \na^i(\mathbb{V}_\pm(d) \cdot \na \bar\sigma_\pm) \na^i d_\pm dx\\
        &\quad + \nu_\pm \int_{\R^2} \na(\na^i \bar\sigma_\pm) \cdot \na(\na^i d_{\pm}) dx\\
        &=: I_1 + I_2 + I_3.
    \end{align*}
    Using calculus estimate and Sobolev embedding, we have
    $$
    |I_1| \le C \|\na \mathbb V_\pm(\sigma)\|_{L^\infty} \|\na^id_\pm\|_{L^2}^2 \le C\|\sigma\|_{H^s}\|\na^id_\pm\|_{L^2}^2.
    $$
    To estimate $I_2$, we see that
    \begin{align*}
        |I_2| &\le \int_{\R^2} |\V_\pm(d) \cdot \nabla^{i+1}\bar\sigma_\pm \na^i d_\pm| dx + \sum_{j=1}^i |\na^j \V_\pm (d) \cdot \na^{i-j+1}\bar\sigma_\pm \na^i d_\pm | dx =: I_{21} + I_{22}.
    \end{align*}
    Since $s - 1 > 1$, we note that 
    \begin{equation}\label{Vestvisc}
    \|\V_\pm(d)\|_{L^\infty} \le C\|d\|_{L^1}^{\frac12}\|d\|_{L^\infty}^\frac12 \le C\|d\|_{L^1}^{\frac12}\|d\|_{H^{s-1}}^\frac12.
    \end{equation}
    Then we may bound $I_{21}$ by:
    $$
    I_{21} \le C\|\bar\sigma\|_{H^s}\|d_\pm\|_{L^1}^{\frac12}\|d_\pm\|_{H^{s-1}}^\frac32 \le C\|\bar\sigma\|_{H^s}\left(\|d_\pm\|_{L^1}^2 + \|d_\pm\|_{H^{s-1}}^2\right).
    $$
    $I_{22}$ can be estimated similarly to $I_1$ by using calculus and commutator estimate, Sobolev embedding, and a Calder\'on-Zygmund type estimate:
    \begin{align*}
    I_{22} &\le C\|\na \V_\pm(d)\|_{L^\infty} \|\bar\sigma\|_{H^{s-1}} \|\na^i d_\pm\|_{L^2}\\
    &\le C\|\na \V_\pm(d)\|_{H^{s-1}}\|\bar\sigma\|_{H^{s-1}}\|\na^i d_\pm\|_{L^2}\\
    &\le C\|\bar\sigma\|_{H^{s-1}}\|d_\pm\|_{H^{s-1}}^2.
    \end{align*}
    Finally, to estimate $I_3$, we have
    \begin{align*}
        |I_3| \le \nu_\pm \|\bar\sigma_\pm\|_{H^s} \|\na^{i}\na d_\pm\|_{L^2} \le \frac12 \nu_\pm \|\na^{i}\na d_\pm\|_{L^2}^2 + C\nu_\pm \|\bar\sigma_\pm\|_{H^s}^2,
    \end{align*}
    Combining the above estimates, summing over all $i = 0,\hdots, s-1$, and invoking \eqref{est:uniform}, we obtain the following inequality concerning $\|d_\pm\|_{H^{s-1}}$:
    \begin{equation}\label{est:dHs-1}
    \begin{aligned}
        \frac{d}{dt} \|d_\pm\|_{H^{s-1}}^2 + \nu_\pm \|d_\pm\|_{H^{s}}^2 &\le C\left[\left(\|\sigma\|_{H^s} + \|\bar\sigma\|_{H^s}\right) (\|d_\pm\|_{H^{s-1}}^2 + \|d_\pm\|_{L^1}^2)+ \nu_\pm \|\bar\sigma_\pm\|_{H^s}^2\right]\\
        &\le C(T) (\|d_\pm\|_{H^{s-1}}^2+ \|d_\pm\|_{L^1}^2+ \nu_\pm).
    \end{aligned}
    \end{equation}
    Another important ingredient is an estimate for $\|d_\pm\|_{L^1}$. Rewriting \eqref{eq:dpm} as
    $$
    \pt d_\pm + \V_\pm(\sigma) \cdot \nabla d_\pm - \nu_\pm \Delta d_\pm = - \V_\pm (d) \cdot \na \bar\sigma_\pm - \nu_\pm \Delta \bar\sigma_\pm,
    $$
    we deduce the following estimate:
    \begin{align*}
    \frac{d}{dt}\|d_\pm\|_{L^1} &\le \|\V_\pm (d)\|_{L^\infty}\|\na \bar\sigma_\pm\|_{L^1} + \nu_\pm \|\Delta \bar\sigma_\pm\|_{L^1}\\
    &\le C|\Omega_0|^\frac12\|\na \bar\sigma_\pm\|_{L^2} \|d\|_{L^1}^{\frac12}\|d\|_{H^{s-1}}^\frac12 + \nu_\pm |\Omega_0|^\frac12 \|\Delta\bar\sigma_\pm\|_{L^2}.
    \end{align*}
    Here, $\Omega_0 = \supp(\tau_\pm)$, and we used that the evolution of the inviscid solution $\bar\sigma$ is volume-preserving. A further use of Young's inequality and \eqref{est:uniform} yields:
    \begin{equation}\label{est:dL1}
    \begin{aligned}
        \frac12\frac{d}{dt}\|d_\pm\|_{L^1}^2 &\le C|\Omega_0|^\frac12\|\na \bar\sigma_\pm\|_{L^2} \|d_\pm\|_{L^1}^{\frac32}\|d\|_{H^{s-1}}^\frac12 + \nu_\pm |\Omega_0|^\frac12 \|\Delta\bar\sigma_\pm\|_{L^2}\|d_\pm\|_{L^1}\\
        &\le C|\Omega_0|^\frac12 (\|\bar\sigma_\pm\|_{H^s}+\nu_\pm) \|d_\pm\|_{L^1}^2 + C|\Omega_0|^\frac12 \|\bar\sigma_\pm\|_{H^s} \|d_\pm\|_{H^{s-1}}^2 + \frac12 \nu_\pm |\Omega_0|^\frac12 \|\bar\sigma_\pm\|_{H^s}^2\\
        &\le C(T)|\Omega_0|^\frac12(\|d_\pm\|_{H^{s-1}}^2 + \|d_\pm\|_{L^1}^2 + \nu_\pm).
    \end{aligned}
    \end{equation}
    Here, we also used $s > 2$. Adding \eqref{est:dHs-1} and \eqref{est:dL1}, using $d_\pm (x,0) = 0$, and then Gr\"onwall inequality, we have
    \begin{equation}\label{est:vanishings-1}
    \|d_\pm\|_{H^{s-1}}^2 + \|d_\pm\|_{L^1}^2 \le C(T) ((\nu_+ + \nu_-) t),\quad t \in [0,T],
    \end{equation}
    which finishes the first statement of the Theorem. The second statement follows from \eqref{est:vanishings-1}, the following interpolation inequality:
    $$
    \|d_\pm\|_{H^{s'}} \le \|d_\pm\|_{H^{s-1}}^{s-s'} \|d_\pm\|_{H^{s}}^{s'-s+1},\quad s' \in (s-1,s),
    $$
    and the uniform bound $\|d_\pm\|_{H^s} \le C(T)$ in view of \eqref{est:uniform}.
\end{proof}

 \section{A formal derivation}\la{deriv}
 We offer here a unified derivation of both the right-handed and the left-handed equations. They are derived from a two-fluids action
 \be
 \mathcal A = \int_{t_1}^{t_2} \int_{\R^d}\left [\fr{1}{2}(A u_1(x,t))\cdot  u_1(x,t)  + \fr{1}{2}(Bu_2(x,t))\cdot u_2(x,t) + (Cu_1(x,t)) \cdot u_2(x,t)\right ]dx dt
\la{action}
\ee
where $A, B, C$ are commuting self-adjoint scalar operators and $u_1, u_2$ are $\Rr^d$ valued vector fields.
  Scalar operators act on components, $(Au(x,t))_i = Au_i(x,t)$. The operators act in $x$. In this section the operators will be based on the Laplacian via functional calculus. We view $u_1$ and $u_2$ as varying vector fields corresponding to underlying varying Lagrangian path transformations, $X_1(a,t)$ and $X_2(a,t)$ which are time-depending fluctuating diffeomorphisms of $\Rr^d$ (or $\mathbb T^d$).  The Eulerian velocities $u_1$ and $u_2$ are related to the Lagrangian transformations via 
  \be 
u_1 = \pa_t X_1\circ X_1^{-1}, \quad u_2 = \pa_tX_2\circ X_2^{-1}.
\la{u12}
\ee
We consider Eulerian variation vector fields  $\eta_1(x,t)$ and $\eta_2(x,t)$,  whch are obtained from the Lagrangian variations via  $\eta_1 = X_1'\circ X_1^{-1}$ and $\eta_2 = X_2'\circ X_2^{-1}$. Here $X_1'(a,t)$ is the variation of $X_1$, that is, a derivative of $X_1(a,t)$ with respect to a hidden parameter, at a fixed position label $a$ and fixed time $t$. Similarly $X_2'(a,t)$ is the variation of $X_2(a,t)$. These are the basic Lagrangian path fluctuation vector fields and $\eta_i$ are their Eulerian counterparts.   
Differentiating with respect to a parameter in \eqref{u12} (as in \cite{hydro})  we have by calculus
\be
u_1' = D_1\eta_1 -\eta_1\cdot\na u_1
\la{u1'}
\ee
and
\be
u_2' = D_2\eta_2 -\eta_2\cdot\na u_2.
\la{u2'}
\ee
where $D_1 = \pa_t + u_1\cdot\na $ and $D_2 = \pa_t  + u_2\cdot\na$. We thus obtain the variation of  the Eulerian velocities  due to Lagrangian fluctuations.

The action is a functional of two Lagrangian diffeomorphisms,
\be
\mathcal A = \mathcal A[X_1, X_2].
\la{mathcalalax}
\ee
We compute its variation and equate it to zero. This yields evolution equations for the velocities $u_1$ and $u_2$. We assume that
during their variation the diffeomorphisms $X_1$ and $X_2$ remain volume-preserving. This happens if (and only if) $\eta_1$ and $\eta_2$ are divergence-free (\cite{hydro}). Because $A, B, C$ are selfadjoint, we have
\be
\mathcal A' = \int_{t_1}^{t_2}\int_{\R^d}\left[(Au_1+ Cu_2)\cdot(D_1\eta_1 + \eta_1\cdot\na u_1) +
(Cu_1 + Bu_2)\cdot(D_2\eta_2 + \eta_2\cdot\na u_2)\right]dxdt.
\la{mathcalAprime}
\ee
Because $\eta_1$ and $\eta_2$ are divergence-free we have $D_1^* = -D_1$ and $D_2^* = -D_2$. Integrating by parts and equating to zero separately the contributions due to $\eta_1$ and $\eta_2$ we obtain that
\be
\pa_t w_1 + u_1\cdot\na w_1 + (\na u_1)^Tw_1 + \na p_1 = 0
\la{w1eq}
\ee
and 
\be
\pa_t w_2 + u_2\cdot\na w_2 + (\na u_2)^Tw_2 + \na p_2 = 0,
\la{w2eq}
\ee
where
\be
w_1 = Au_1 + Cu_2
\la{w1}
\ee
and
\be
w_2 = - Cu_1 - Bu_2.
\la{w2}
\ee
The pressures $p_1$, $p_2$ are due to the fact that the vanishing is only modulo gradients,  perpendicular to the space of divergence-free vector fields.

We take now $d=2$ and 
denote
\be
\sigma_1 = \na^{\perp}\cdot w_1, \quad \sigma_2 = \na^{\perp}\cdot w_2
\la{curlsw}
\ee
the curls of $w_1$ and $w_2$. It follows from \eqref{w1} and \eqref{w2} that 
\be
\left \{
\ba
\sigma_1 = A\Omega_1 + C\Omega_2\\
\sigma_2 = -C\Omega_1 - B\Omega_2
\ea
\la{sigOmega}
\right.
\ee
 holds, where
 \be
 \Omega_1 = \na^{\perp}\cdot u_1, \quad \Omega_2 = \na^{\perp}\cdot u_2
 \la{Omegas}
 \ee
 are the curls of the velocities $u_1$and $u_2$. Taking the curl of equations \eqref{w1eq} and \eqref{w2eq} we obtain that $\sigma_1$ is carried by the flow of $u_1$ and $\sigma_2$ is carried by the flow of $u_2$
  \be
 \left\{
 \ba
 \pa_t \sigma_1 + u_1\cdot\na\sigma_1 = 0,\\
\pa_t \sigma_2 + u_2\cdot\na\sigma_2 = 0.
\ea
\right.
\la{sigmasconserved}
\ee
 Now, if $AB-C^2$ is invertible, then we can invert \eqref{sigOmega}, and have
 \be
 \left\{
 \ba
 \Omega_1 = (AB-C^2)^{-1}(B\sigma_1 + C\sigma_2)\\
 \Omega_2 = (AB-C^2)^{-1}(-C\sigma_1  -  A\sigma_2).
 \ea
 \right.
 \la{Omegasig}
 \ee
 Thus the active scalars $\sigma_1$ and $\sigma_2$ determine the flow vorticities and hence the velocities.
 As a sanity check, if $C=0$ and $A= B = \mathbb I$, we obtain two independent Euler equations.
Now in the right-handed equations \eqref{active} we have, by taking the curl in \eqref{pmright} 
\be
 \left\{
 \ba
 \Omega_+ =  \Delta \left ( \mathbb K \sigma_+ + \mathbb L \sigma_-\right)\\
 \Omega_- = \Delta\left (-\mathbb L \sigma_+ - \mathbb K \sigma_-\right)
 \ea
 \right.
 \la{opmright}
 \ee
Identifying $\Omega_{+}$ with $\Omega_1$,
$\sigma_+$ with $\sigma_1$ and $\Omega_-$ with $\Omega_2$, $\sigma_-$ with $\sigma_2$ 
we see that \eqref{opmright} follows from \eqref{Omegasig} if we choose $A, B, C$ such that
\be
\left\{
\ba
(AB - C^2)^{-1}B = \Delta\mathbb K\\
(AB - C^2)^{-1}C = \Delta\mathbb L\\
A = B
\ea
\right.
\la{constr}
\ee
In order to solve for $A, B, C$ let us note that  we have, by calculus and definitions \eqref{mathbbK} and \eqref{mathbbL} that
\be
\mathbb L^2 - \mathbb K^2 = (-\Delta)^{-1}(\mathbb I -\Delta)^{-1}.
\ee
From \eqref{constr} we have first
\be
C^2-A^2 = (\mathbb I-\Delta)(-\Delta)^{-1},
\ee
and then we obtain the coefficients $A,B,C$
\be
\left\{
\ba
A = B = -\Delta \mathbb K (C^2-A^2)  = \mathbb K (\mathbb I-\Delta) = -\fr{1}{2} (-\Delta)^{-1}\\
C = -\Delta \mathbb L(C^2-A^2) = \mathbb L(\mathbb I- \Delta) = \mathbb I + \fr{1}{2}(-\Delta)^{-1}
\ea
\right .
\la{abcright}
\ee
Thus, the action for the right-handed system is
\be
\mathcal A_{right} = \int_{t_1}^{t_2}\left [(u_1(t), u_2(t))_{L^2} - \fr{1}{4}\|\Lambda^{-1}(u_1-u_2)\|_{L^2}^2\right ]dt
\la{wonkyright}
\ee 
where $\Lambda = (-\Delta)^{\fr{1}{2}}$. 
For the left-handed equations we proceed in the same manner, taking the curl in \eqref{pmleft}, and we obtain the coefficients
\be
\left\{
\ba
A= B = \mathbb L(\mathbb I -\Delta) = \mathbb I + \fr{1}{2}(-\Delta)^{-1}\\
C = -\mathbb K(\mathbb I-\Delta) = \fr{1}{2}(-\Delta)^{-1}
\ea
\right.
\la{abcleft}
\ee
and the action
\be
\mathcal A_{left} = -\int_{t_1}^{t_2} \left [\fr{1}{2} \|u_1(t)\|^2_{L^2} + \fr{1}{2}\|u_2(t)\|^2_{L^2} + \fr{1}{4}\|\Lambda^{-1}(u_1-u_2)(t)\|_{L^2}^2\right]dt.
\la{wonkyleft}
\ee
{\bf{Data Statement}} No data are associated to this work.\\
{\bf{Conflict of Interest}} The authors declare no conflict of interest.\\
{\bf{Acknowledgment}} The work of PC was partly supported by NSF grant DMS-2106528 and by a Simons Collaboration Grant 601960. The work of ZH was partly supported by a Simons Collaboration Grant 601960.

\end{document}